\newcommand{\tnorm}[1]{\vert\hspace{-0.3mm}\Vert#1\Vert\hspace{-0.3mm}\vert}
\newtheorem{remark}{\it Remark\/}
\title{A penalty free non-symmetric Nitsche type method for the weak
  imposition of boundary conditions}
\author{Erik Burman\thanks{Department of Mathematics, 
University of Sussex, Brighton, 
UK--BN1 9QH, 
United Kingdom; ({\tt E.N.Burman@sussex.ac.uk})}
}
\begin{document}

\maketitle

\begin{abstract}
In this note we show that the non-symmetric version of the classical
Nitsche's method for the weak imposition of boundary conditions is
stable without penalty term. We prove optimal $H^1$-error estimates
and $L^2$-error estimates that are suboptimal with half an order in
$h$. Both the pure diffusion and the convection--diffusion problems
are discussed.
\end{abstract}

%\end{frontmatter}

\newcommand{\cut}{c}
\def\IR{\mathbb R}
\def\Ext{\mbox{\textsf{E}}}

\section{Introduction}
In his seminal paper from 1971, \cite{Nit71}, Nitsche proposed a consistent penalty
method for the weak imposition of boundary conditions. The formulation
proposed was symmetric so as to reflect the symmetry of the underlying
Poisson problem. Stability was obtained thanks to a penalty term, with a
penalty parameter that must satisfy a lower bound to ensure coercivity. 

A non-symmetric version of Nitsche's method was later
proposed by Freund and Stenberg \cite{FS95} and it was noted that this method did not
need the lower bound for stability. The penalty term however could not 
be omitted, since coercivity fails, and error estimates degenerate as the penalty parameter
goes to zero. The non-symmetric version of Nitsche's method was then
proposed as a discontinuous Galerkin (DG) method by Oden et al., \cite{OBB98} and it was proven that 
the non-symmetric version was stable for polynomial orders $k \ge 2$,
by Girault et al. \cite{RWG01} and Larson and Niklasson \cite{LN04}. 
In \cite{LN04} stability for the penalty free case is proved using an inf-sup argument that relies
on the important number of degree's of freedom available in high order
DG-methods.

To the best of our knowledge no similar results have been proven for
the non-symmetric version of Nitsche's method for the imposition of
boundary conditions when continuous approximation spaces are used. 
Indeed in this case the DG-analysis does not work since polynomials 
may not be chosen independently on different elements because of the continuity
constraints. Weak impositition of boundary conditions has been
advocated by Hughes et al. for turbulence computations of LES-type in
\cite{BaHu07}. They showed that the mean flow in the boundary
layer was more accurately captured using weakly rather than strongly
imposed boundary conditions. They also noted that the
non-symmetric Nitsche's method appears stable without penalty
\cite{HEML00b}.

In applications there is interest in reducing the
number of free parameters used without increasing the number of degrees of freedom needed for
the coupling, see \cite{GW10} for a discussion. From this point of
view a penalty free Nitsche method is a welcome addition to the
computational
toolbox, in particular for flow problems where the system matrix is
non-symmetric anyway, because of the convection terms. It has no penalty parameter and does not make use of Lagrange
multipliers.

Numerical evidence also suggests that the unpenalized non-symmetric
Nitsche type method has
some further interesting properties. When using iterative solution
methods in domain decomposition it
has been shown to have more favorable convergence properties compared
to the symmetric method
\cite{BZ06}. For the solution of Cauchy-type inverse problem using
steepest descent type algorithms it has been shown numerically to have superior
convergence properties in the initial phase of the iterations compared to the symmetric version or strongly
imposed conditions, in spite of the lack of dual consistency. 
% It should also be noted that
%without penalty term the energy equality for the Navier-Stokes'
%equations
%is unperturbed by the imposition of boundary conditions.

In view of this
the question naturally arises if the penalty free method is sound, or if it can
fail under unfortunate circumstances.

In this paper we prove  for the Poisson problem that the non-symmetric 
Nitsche's method is indeed stable and optimally convergent in the
$H^1$-norm for polynomial orders $k \ge 1$. We also show that in this
case, the convergence rate of the error in the $L^2$-norm is
suboptimal with only half a power of $h$. Hence the non-optimality for
the non-symmetric Nitsche's method for continuous Galerkin methods is not as important as for
DG-methods (see \cite{OBB98} and \cite{GR09} for numerical evidence of
the sub-optimal behavior in this latter case).

We then show how the results may
be applied in the case of convection--diffusion equations, considering
first the Streamline--diffusion method and then outlining how the
results may be extended to the case of the Continuous interior penalty
method. 

Nitsche's method however has some stabilizing properties of
its own, in particular for outflow layers, this phenomenon was
analyzed in \cite{Sch08} and illustrated herein with a numerical example.
This makes the non-symmetric Nitsche's
method an appealing, parameter free, method for flow problems where
the system matrix is 
non-symmetric and the use of stabilized methods usually also results
in the loss of half a power of $h$. It should be noted however that
the smallest error in the $L^2$-norm is obtained with the formulation
using penalty on the boundary, as illustrated in the numerical
section. So we do not claim that the penalty
free method is the most accurate. 

We only prove the result
in the case of the imposition of boundary conditions but the extensions
of the results to the domain decomposition case of \cite{BHS03} or the
fictitious domain method of \cite{Bu11} are
straightforward using similar techniques as below. Also note that
since the main aim of the present paper is the study of weak
imposition of boundary conditions, we will assume that the reader has 
basic understanding of the techniques for analyzing stabilized finite
element methods and some arguments are only sketched.

For the sake of clarity, we first prove the main result on the pure diffusion problem and then
discuss the extension of our result to the case of convection--
diffusion problems. The paper is ended with some numerical examples.

\section{The pure diffusion problem}
Let $\Omega$ be a bounded domain in $\IR^2$, with polygonal
boundary $\partial \Omega$. Wherever $H^2$-regularity of the exact
solution is needed we
also assume that $\Omega$ is convex. Let $\{\Gamma_i\}_i$ denote the faces of
the polygonal such that $\partial \Omega = \cup_i \Gamma_i$.
The Poisson equation that we propose as a model problem is given
by
\begin{equation}\label{poisson}
\begin{array}{rcl}
- \Delta u &=& f\quad \mbox{ in } \Omega,\\
u&=&g \quad \mbox{ on } \partial \Omega,
\end{array}
\end{equation}
where $f \in L^2(\Omega)$ and $g \in H^{1/2}(\partial \Omega)$ or $g \in H^{3/2}(\partial \Omega)$.

We have the following weak formulation: find $u \in V_g$ such that
\begin{equation}
a(u, v) = (f,v) _\Omega, \quad \forall v \in V_0,
\end{equation}
where $(x,y)_\Omega$ denotes the $L^2$-scalar product over $\Omega$, 
\[
V_g := \{v \in H^1(\Omega) : v\vert_{\partial \Omega} = g \}
\]
and 
\[
a(u,v) := ( \nabla u,\nabla v)_{\Omega}.
\]
This problem is well-posed by the Lax-Milgram's lemma, using the
standard arguments to account for non-homogeneous boundary
conditions. 
The $H^1$-stability, $\|u\|_{H^1(\Omega)}
\leq C_{R1} (\|f\|+\|g\|_{H^{1/2}(\partial \Omega)})$ holds and under the
assumptions on $\Omega$, $f$ and $g$ there holds  $\|u\|_{H^2(\Omega)}
\leq C_{R2} (\|f\|+\|g\|_{H^{3/2}(\partial \Omega)})$. Here we let
$\|x\| := \|x\|_{L^2(\Omega)}$.
Below $C$ will be used as a generic constant that may change at each
occasion, is independent on $h$,
but not necessarily of the local mesh geometry. We will also use the
notation $a \lesssim b$ for $a \leq C b$.
\section{The finite element formulation}
\label{S:FEM}
Let $\{\mathcal{T}_h\}$ denote a family of quasi uniform and shape regular
triangulations fitted to $\Omega$, indexed by the mesh-parameter
$h$. (It is straightforward to lift the quasi uniformity assumption, at
the expense of some standard technicalities and readability.) The triangles 
of $\mathcal{T}_h$ will be denoted $K$ and their diameter $h_K :=
\mbox{diam}(K)$. The interior of a set $P$
will be denoted $\overset{\circ}{P}$. For a given $\mathcal{T}_h$ the mesh-parameter is
determined by $h := \max_{K \in \mathcal{T}_h} h_K$. Shape regularity
is expressed by the existence of a constant $c_\rho \in \mathbb{R}$
for the family of triangulations such that, with $\rho_K$ the radius of the
largest ball inscribed in an element $K$, there holds,
\[
\frac{h_K}{\rho_K} \leq c_\rho, \forall K \in \mathcal{T}_h.
\]
For technical reasons, and to avoid the treatment of special cases, we
assume that for all $i$, $\Gamma_i$ contains no less than five element faces.

We introduce the standard finite element space of continuous piece wise
polynomial functions
\[
V^k_h := \{ v_h \in H^1(\Omega): v_h \vert_{K} \in \mathbb{P}_k(K),\quad \forall
K \in \mathcal{T}_h\},\, k \ge 1,
\]
where $\mathbb{P}_k(K)$ denotes the space of polynomials of degree less than or
equal to $k$ on the element $K$.
The finite element formulation that we consider then takes the form,
find $u_h \in V^k_h$ such that
\begin{equation}\label{FEM}
a_h(u_h,v_h) = (f,v_h)_{\Omega} + \left<g, \nabla v_h \cdot n
\right>_{\partial \Omega}\quad \forall v_h \in V_h^k,
\end{equation}
where $\left<x,y\right>_{\partial \Omega}$ denotes the $L^2$-scalar
product over the boundary of $\Omega$
and
\begin{equation}\label{disc_biform}
a_h(u_h,v_h) := a(u_h,v_h) - \left<\nabla u_h \cdot n, v_h \right>_{\partial \Omega} + \left<u_h,\nabla v_h \cdot n \right>_{\partial \Omega}.
\end{equation}
Note that in the classical non-symmetric Nitsche's method we also add
a penalty term of the form
\begin{equation}\label{penalty}
\sum_K \left<\gamma   h_K^{-1} u_h, v_h \right>_{\partial \Omega
  \cap \partial K}
\end{equation}
and modify the second term of the right hand side accordingly
\[
\sum_K \left<g, \gamma  h_K^{-1} v_h + \nabla v_h \cdot n \right>_{\partial \Omega \cap \partial K}.
\]
The key observation of the present work is that the penalty parameter
$\gamma$ may be chosen to be zero without loss of neither stability
nor accuracy.

Inserting the exact solution $u$ into the fomulation \eqref{FEM} and
integrating by parts immediately leads to the following consistency relation.
\begin{lemma}\label{galortho}
If $u$ is the solution of \eqref{poisson} and $u_h$ is the solution of
\eqref{FEM} then there holds
\[
a_h(u - u_h,v_h) = 0.
\]
\end{lemma}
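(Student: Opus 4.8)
The plan is to show that the exact solution $u$ of \eqref{poisson} itself satisfies the discrete equation \eqref{FEM}, so that the claim follows immediately from the linearity of $a_h$ in its first argument by subtracting the equation for $u_h$.

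First I would insert $u$ into the discrete bilinear form \eqref{disc_biform} and treat the volume contribution $a(u,v_h) = (\nabla u,\nabla v_h)_\Omega$ by integration by parts. Under the standing $H^2$-regularity assumption on $u$ (guaranteed here by convexity of $\Omega$ together with $g \in H^{3/2}(\partial\Omega)$), the normal derivative $\nabla u \cdot n$ possesses a trace in $L^2(\partial\Omega)$ and
\[
(\nabla u,\nabla v_h)_\Omega = -(\Delta u, v_h)_\Omega + \left<\nabla u \cdot n, v_h\right>_{\partial\Omega} = (f,v_h)_\Omega + \left<\nabla u \cdot n, v_h\right>_{\partial\Omega},
\]
where the last equality uses the equation $-\Delta u = f$ in $\Omega$.

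Next I would substitute this identity back into $a_h(u,v_h)$. The boundary term $+\left<\nabla u \cdot n, v_h\right>_{\partial\Omega}$ just produced cancels exactly against the term $-\left<\nabla u \cdot n, v_h\right>_{\partial\Omega}$ appearing in \eqref{disc_biform}, while in the remaining term $\left<u, \nabla v_h \cdot n\right>_{\partial\Omega}$ I would use the boundary condition $u = g$ on $\partial\Omega$. What is left is
\[
a_h(u,v_h) = (f,v_h)_\Omega + \left<g, \nabla v_h \cdot n\right>_{\partial\Omega},
\]
which is precisely the right-hand side of \eqref{FEM}. Hence $a_h(u,v_h) = a_h(u_h,v_h)$ for every $v_h \in V_h^k$, and linearity of $a_h$ in the first slot gives $a_h(u-u_h,v_h)=0$.

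The computation is entirely routine; the only point demanding any care is the legitimacy of the integration by parts, i.e.\ that $\nabla u \cdot n$ is a well-defined object on $\partial\Omega$, which is exactly why $H^2$-regularity of $u$ is invoked. If one wished to dispense with this assumption, one could argue element by element and use the interelement continuity of the flux of $u$, but the boundary trace cannot be avoided at the level of the formulation \eqref{FEM}, which already contains $\nabla u_h \cdot n$ restricted to $\partial\Omega$; so some trace regularity of the solution is in any case implicit in the statement.
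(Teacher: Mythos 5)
Your argument is correct and is exactly the route the paper indicates (insert $u$ into \eqref{FEM}, integrate by parts using $-\Delta u=f$ and $u=g$ on $\partial\Omega$, and subtract); the paper simply states this in one line without writing out the computation. Your additional remark on the $H^2$-regularity needed to justify the boundary trace of $\nabla u\cdot n$ is a sensible clarification consistent with the paper's standing assumptions.
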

For future reference we here recall the classical trace and inverse
inequalities satisfied by the spaces $V_h^k$.
\begin{lemma}(Trace inequality)\label{trace_ineq}
There exists $C_T \in \mathbb{R}$ such that
for all $v_h \in \mathbb{P}_k(K)$ and for all $K \in \mathcal{T}_h$ there holds
\[
\|v_h\|_{L^2(\partial K)} \leq C_T(h_K^{-\frac12} \|v_h\|_{L^2(K)} +
h_K^{\frac12} \|\nabla v_h\|_{L^2(K)}).
\]
\end{lemma}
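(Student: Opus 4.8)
The plan is the standard affine scaling argument through the reference element. Fix the reference triangle $\hat K$, let $F_K : \hat K \to K$, $F_K(\hat x) = B_K \hat x + b_K$, be the affine map onto $K$, and for $v_h \in \mathbb{P}_k(K)$ set $\hat v_h := v_h \circ F_K \in \mathbb{P}_k(\hat K)$; write $J_K := |\det B_K|$ and let $\|B_K\|$ denote the operator norm. On the fixed domain $\hat K$ the trace operator $H^1(\hat K) \to L^2(\partial \hat K)$ is bounded, so there is a constant $\hat C$ depending only on $\hat K$ with
\[
\|\hat w\|_{L^2(\partial \hat K)} \le \hat C \big(\|\hat w\|_{L^2(\hat K)} + \|\hat\nabla\hat w\|_{L^2(\hat K)}\big) \qquad \forall\, \hat w \in H^1(\hat K),
\]
in particular for $\hat v_h$. (One could instead invoke equivalence of norms on the finite-dimensional space $\mathbb{P}_k(\hat K)$, but the trace theorem gives a bound that is uniform in $k$ and valid for all of $H^1(\hat K)$.)

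Next I would record the elementary change-of-variables identities. Volume integrals scale by $J_K$, so $\|v_h\|_{L^2(K)}^2 = J_K\|\hat v_h\|_{L^2(\hat K)}^2$; the gradient transforms as $\hat\nabla\hat v_h = B_K^\top(\nabla v_h)\circ F_K$, giving $\|\hat\nabla\hat v_h\|_{L^2(\hat K)}^2 \le J_K^{-1}\|B_K\|^2\|\nabla v_h\|_{L^2(K)}^2$; and in two dimensions a face of $\hat K$ is a segment whose image under $F_K$ has surface measure scaled by a factor bounded by $\|B_K\|$, so $\|v_h\|_{L^2(\partial K)}^2 \le C\,\|B_K\|\,\|\hat v_h\|_{L^2(\partial \hat K)}^2$. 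Shape regularity enters only through the classical bounds $\|B_K\| \lesssim h_K$ and $\|B_K^{-1}\| \lesssim \rho_K^{-1} \lesssim h_K^{-1}$, which in two dimensions also yield $J_K^{-1} = |\det B_K^{-1}| \le \|B_K^{-1}\|^2 \lesssim h_K^{-2}$, with all constants depending solely on $c_\rho$ and $\hat K$.

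Combining the trace inequality on $\hat K$ with these scalings,
\[
\|v_h\|_{L^2(\partial K)}^2 \;\lesssim\; \|B_K\|\big(\|\hat v_h\|_{L^2(\hat K)}^2 + \|\hat\nabla\hat v_h\|_{L^2(\hat K)}^2\big) \;\lesssim\; h_K\big(J_K^{-1}\|v_h\|_{L^2(K)}^2 + J_K^{-1}\|B_K\|^2\|\nabla v_h\|_{L^2(K)}^2\big),
\]
and inserting $J_K^{-1}\lesssim h_K^{-2}$ together with $\|B_K\|^2 \lesssim h_K^2$ yields $\|v_h\|_{L^2(\partial K)}^2 \lesssim h_K^{-1}\|v_h\|_{L^2(K)}^2 + h_K\|\nabla v_h\|_{L^2(K)}^2$; taking square roots and using $\sqrt{a+b}\le\sqrt a+\sqrt b$ gives the stated inequality with $C_T = C_T(c_\rho,\hat K)$. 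The computation is entirely routine, and the same argument applies verbatim to any $v \in H^1(K)$; the only point calling for a moment's attention is the transformation of the boundary measure, which is elementary in the two-dimensional setting considered here.
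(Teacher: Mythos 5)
Your scaling argument is correct in every step (the change-of-variables identities, the treatment of the boundary measure in two dimensions, and the use of shape regularity to bound $\|B_K\|\lesssim h_K$ and $J_K^{-1}\lesssim h_K^{-2}$), and it is precisely the standard proof of this classical local trace inequality. The paper offers no proof of its own --- it merely recalls the lemma as a known result --- so there is nothing to compare against beyond noting that your derivation, which as you observe holds for all of $H^1(K)$ and not only for polynomials, is the canonical one.
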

\begin{lemma}(Inverse inequality)\label{inverse_ineq}
There exists $C_I \in \mathbb{R}$ such that
for all $v_h \in \mathbb{P}_k(K)$ and for all $K \in \mathcal{T}_h$ there holds
\[
\|\nabla v_h\|_{L^2(K)} \leq C_I h_K^{-1} \|v_h\|_{L^2(K)}.
\]
\end{lemma}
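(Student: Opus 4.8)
The plan is to prove the inverse inequality by the classical scaling argument, reducing the estimate on an arbitrary element to a norm comparison on a fixed reference element. Let $\hat K$ be a reference triangle and, for each $K \in \mathcal{T}_h$, let $F_K : \hat K \to K$, $F_K(\hat x) = B_K \hat x + b_K$, be the affine bijection onto $K$. Given $v_h \in \mathbb{P}_k(K)$, set $\hat v_h := v_h \circ F_K \in \mathbb{P}_k(\hat K)$.

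First I would record the standard scaling estimates obtained from shape regularity: $\|B_K\| \lesssim h_K$, $\|B_K^{-1}\| \lesssim h_K^{-1}$ and $|\det B_K| \sim h_K^2$, where the hidden constants depend only on $c_\rho$. A change of variables then gives $\|v_h\|_{L^2(K)}^2 = |\det B_K|\,\|\hat v_h\|_{L^2(\hat K)}^2$, and the chain rule $\nabla v_h = B_K^{-T}\,(\hat\nabla \hat v_h)\circ F_K^{-1}$ together with the same change of variables gives $\|\nabla v_h\|_{L^2(K)} \leq \|B_K^{-1}\|\,|\det B_K|^{1/2}\,\|\hat\nabla \hat v_h\|_{L^2(\hat K)}$.

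Next I would use the finite-dimensionality of $\mathbb{P}_k(\hat K)$: the map $\hat v \mapsto \|\hat\nabla \hat v\|_{L^2(\hat K)}$ is a seminorm and $\hat v \mapsto \|\hat v\|_{L^2(\hat K)}$ is a norm on this finite-dimensional space, so by compactness of the unit sphere there is a constant $\hat C = \hat C(k)$ with $\|\hat\nabla \hat v\|_{L^2(\hat K)} \leq \hat C\,\|\hat v\|_{L^2(\hat K)}$ for all $\hat v \in \mathbb{P}_k(\hat K)$. Combining this with the scaling estimates,
\[
\|\nabla v_h\|_{L^2(K)} \leq \hat C\,\|B_K^{-1}\|\,|\det B_K|^{1/2}\,\|\hat v_h\|_{L^2(\hat K)} = \hat C\,\|B_K^{-1}\|\,\|v_h\|_{L^2(K)} \lesssim h_K^{-1}\|v_h\|_{L^2(K)},
\]
which is the claimed bound with $C_I$ depending only on $k$ and $c_\rho$.

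The only step requiring any care is the derivation of $\|B_K^{-1}\| \lesssim h_K^{-1}$ and $|\det B_K| \sim h_K^2$ from the shape-regularity hypothesis $h_K/\rho_K \leq c_\rho$; this is where the constant $c_\rho$ enters and is the main (though entirely routine) obstacle. Everything else is a direct application of the change-of-variables formula and the elementary fact that all norms on a finite-dimensional vector space are equivalent.
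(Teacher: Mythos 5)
Your proof is correct and is precisely the classical scaling argument (affine map to a reference element, equivalence of norms on the finite-dimensional space $\mathbb{P}_k(\hat K)$, and the shape-regularity bounds $\|B_K^{-1}\|\lesssim h_K^{-1}$, $|\det B_K|\sim h_K^2$) that the paper implicitly invokes when it states this lemma without proof as a standard recalled fact. Nothing is missing; the resulting constant $C_I$ depends only on $k$ and $c_\rho$, uniformly over the quasi-uniform, shape-regular family, exactly as the lemma requires.
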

\section{Stability}\label{stability}
The non-symmetric Nitsche's method is positive and testing with $v_h =
u_h$
immediately gives control of the $H^1$-seminorm of $u_h$. In  order
for the formulation to be well-posed this is not sufficient. Indeed
well-posedness is a consequence of the Poincar\'e inequality that
holds provided we have sufficient control of the trace of $u_h$ on
$\partial \Omega$. This is the role of the penalty term \eqref{penalty}, it ensures
that the following Poincar\'e inequality is satisfied
\[
\|u_h\| \leq C_P \|u_h\|_{1,h} , \mbox{ where } \|u_h\|_{1,h}^2:=\|\nabla u_h\|^2 + \|u_h\|^2_{\frac12,h,\partial \Omega} 
\]
with
$$
\|u_h\|^2_{\frac12,h,\partial \Omega}:= \sum_K \left<h_K^{-1} u_h,u_h
\right>_{\partial \Omega \cap \partial K}.
$$
%For future reference we also introduce the following norm on the boundary
%$$
%\|u_h\|^2_{-\frac12,h,\partial \Omega}:= \sum_K \left<h_K u_h,u_h
%\right>_{\partial \Omega \cap \partial K}.
%$$
Since we have omitted the penalty term, boundary control
of $u_h$ is not an
immediate consequence of testing with $v_h = u_h$. What we will show
below
is that control of the boundary term can be recovered by proving an
inf-sup condition. Indeed the non-symmetric Nitsche's method can be
interpreted as a Lagrange multiplier method where the Lagrange
multiplier $\lambda_h$ has been replaced by the normal gradient of the
solution: $\nabla u_h \cdot n$. This interpretation of the Nitsche's
method was originally proposed in \cite{Sten95}, however without considering
the inf-sup condition. In the DG-framework it was considered in
\cite{BS10}, where equivalence was shown between a certain
Lagrange-multiplier method and a certain DG-method.
When Lagrange-multipliers are used to impose continuity,
the system has a saddle point structure and the inf-sup condition is
the standard way of proving well-posedness. Here we will follow a
similar procedure, the only difference is that the solution space
and the multiplier space are strongly coupled, since the latter consists
simply of the normal gradients of the former. A key result is given in the
following lemma where we construct a function in the test space that
will allow us to control certain averages of the solution on the
boundary. 
To this end regroup the boundary elements, i.e. the elements with
either a face or a vertex on the boundary, in (closed) patches $P_j$, with
boundary $\partial P_j$,
$j=1...N_P$. Let $F_j := \partial P_j \cap \partial \Omega$.
We assume that the $P_j$ are designed such that each $F_j$ has at
least four inner nodes (this is strictly necessary only if both end
vertices of $P_j$ belong to corner elements with all their vertices on
the boundary). Under our assumptions on the mesh, every $\Gamma_i$
contains at least one patch $P_j$ and there
exists $c_1,c_2$ such that for all $j$
\begin{equation} \label{Pcond}
c_1 h \leq \mbox{meas}(F_j) \leq c_2h.
\end{equation}
The average value of a function $v$ over $F_j$ will be denoted by
$\bar v^j$.
\begin{lemma}\label{spec_func}
For any given vector $(r_j)_{j=1}^{N_P} \in \mathbb{R}^{N_P}$
there exists $\varphi_r \in V_h^1$ such that for all $1 \leq j \leq N_P$
there holds
\begin{equation}\label{constraint_vp}
\mbox{meas}(F_j)^{-1} \int_{F_j} \nabla \varphi_r \cdot n
~\mbox{d}s = r_j
\end{equation}
and
\begin{equation}\label{stability_vp}
\|\varphi_r\|_{1,h} \lesssim \left(\sum_{j=1}^{N_P}
\|h^{\frac12} r_j\|^2_{L^2(F_j)}\right)^{1/2}.
\end{equation}
\end{lemma}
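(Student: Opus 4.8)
The plan is to construct $\varphi_r$ patch by patch, superposing local functions that are supported near each $F_j$ and that, on $F_j$, have a prescribed normal derivative with the right average. The natural building block is a function that vanishes on $\partial\Omega$ but whose normal derivative is a bump of height $\sim r_j$ concentrated on $F_j$: working with the reference patch, one wants $\varphi$ such that $\varphi|_{F_j}=0$ while $\nabla\varphi\cdot n|_{F_j}$ has mean value $r_j$. Since $\varphi_r\in V_h^1$ is piecewise linear, on each boundary element $K$ with a face on $\partial\Omega$ the normal derivative is the (constant along the face) difference quotient between the value at the interior vertex and the value $0$ on the boundary face, divided by the element height, which is $\sim h$. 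So the recipe is: on each patch $F_j$ with its four guaranteed interior nodes, assign nodal values $\sim h\,r_j$ at the interior nodes adjacent to $F_j$ and zero at all boundary nodes, tapering to zero away from the patch; then $\mathrm{meas}(F_j)^{-1}\int_{F_j}\nabla\varphi_r\cdot n\,\mathrm{d}s$ is a weighted combination of these nodal values over the element height, which by construction equals $r_j$ after choosing the weights. The role of the assumption that each $F_j$ has at least four interior nodes, and that patches overlap in a controlled way, is precisely to give enough free nodal degrees of freedom to hit the single scalar constraint \eqref{constraint_vp} per patch while keeping the supports essentially disjoint (or with bounded overlap), so that the constraints decouple.

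The first step I would carry out is to fix, on a single patch, a local finite element function $\psi_j\in V_h^1$ with $\mathrm{supp}\,\psi_j$ contained in $P_j$ (or a fixed finite neighbourhood of $P_j$), with $\psi_j=0$ on all of $\partial\Omega$ except possibly interior nodal values, normalized so that $\mathrm{meas}(F_j)^{-1}\int_{F_j}\nabla\psi_j\cdot n\,\mathrm{d}s = 1$, and establish the scaling bound $\|\psi_j\|_{1,h}\lesssim \mathrm{meas}(F_j)^{1/2}\sim h^{1/2}$. This is a scaling/shape-regularity argument on the reference patch together with the quasi-uniformity hypothesis \eqref{Pcond}; the trace and inverse inequalities (Lemmas \ref{trace_ineq} and \ref{inverse_ineq}) convert the interior $H^1$-type norm of $\psi_j$ and its boundary contribution into the claimed $h^{1/2}$ behaviour. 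The nodal values of $\psi_j$ are $O(h)$ (so that the difference quotient over an $O(h)$ element height is $O(1)$), giving $\|\nabla\psi_j\|_{L^2(P_j)}\lesssim h^{1/2}\cdot 1$ and $\|\psi_j\|^2_{\frac12,h,\partial\Omega}\lesssim h^{-1}\cdot h^2\cdot h = h^{2}$, hence the bound.

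Then I would set $\varphi_r := \sum_{j=1}^{N_P} r_j\,\psi_j$. Linearity gives \eqref{constraint_vp} immediately, provided the supports are arranged so that $\nabla\psi_i\cdot n$ has zero mean on $F_j$ for $i\neq j$; if one cannot make the supports strictly disjoint, one instead gets a near-identity (block-diagonal-dominant) linear system relating the true averages to the $r_j$, which is invertible with uniformly bounded inverse by the bounded-overlap property, and one adjusts the coefficients accordingly without changing the final estimate. For \eqref{stability_vp}, use bounded overlap to sum: $\|\varphi_r\|_{1,h}^2 \lesssim \sum_j r_j^2\,\|\psi_j\|_{1,h}^2 \lesssim \sum_j r_j^2\, h \sim \sum_j \|h^{1/2} r_j\|^2_{L^2(F_j)}$, since $\|h^{1/2}r_j\|^2_{L^2(F_j)} = h\,r_j^2\,\mathrm{meas}(F_j)\sim h^2 r_j^2$ — here one must be slightly careful that the scaling in \eqref{stability_vp} matches, which it does once $\mathrm{meas}(F_j)\sim h$ is used on both sides.

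The main obstacle is the explicit construction of $\psi_j$ near \emph{corner} patches and near the endpoints of a flat face $\Gamma_i$: there the geometry is less homogeneous, a patch may abut another $\Gamma_{i'}$, and one must verify that enough interior nodes are genuinely available and that the normal-derivative functional $v_h\mapsto \int_{F_j}\nabla v_h\cdot n$ is not degenerate on the local space. This is exactly why the statement imposes "at least four inner nodes" on each $F_j$ and "no less than five element faces" on each $\Gamma_i$; the careful bookkeeping to see that these combinatorial conditions suffice to make the local construction work, uniformly in $h$ and in the (bounded) mesh geometry, is the technical heart of the proof, while the scaling estimates themselves are routine.
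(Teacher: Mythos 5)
Your construction is essentially the paper's: one local piecewise--affine bump per patch, normalized so that the mean normal derivative over $F_j$ equals one, then superposed with coefficients $r_j$. The only structural difference is where the nonzero nodal values sit: the paper puts the value $1$ at the \emph{boundary} nodes interior to $F_j$ (and $0$ at interior nodes and at corner elements), so that the element star of every active node lies inside $P_j$ and the cross-patch constraints \eqref{constraint_vp} decouple exactly --- no diagonally-dominant linear system is needed. Your variant (zero trace, nonzero values at interior nodes adjacent to $F_j$) also works and even kills the boundary part of $\|\cdot\|_{1,h}$, but it forces the perturbation argument you sketch, since an interior node adjacent to $F_j$ may belong to elements touching $F_{j\pm1}$.

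There is, however, a concrete scaling error which, as written, prevents \eqref{stability_vp} from closing. You claim $\|\psi_j\|_{1,h}\lesssim \mathrm{meas}(F_j)^{1/2}\sim h^{1/2}$ via ``$\|\nabla\psi_j\|_{L^2(P_j)}\lesssim h^{1/2}\cdot 1$''. But $P_j$ is a two-dimensional patch of boundedly many elements of diameter $\sim h$, so $\mathrm{meas}(P_j)\sim h^2$, and a gradient of pointwise size $O(1)$ supported on $P_j$ satisfies $\|\nabla\psi_j\|_{L^2(P_j)}\lesssim \mathrm{meas}(P_j)^{1/2}\sim h$, not $h^{1/2}$: you have substituted the one-dimensional measure of $F_j$ where the two-dimensional measure of $P_j$ is required. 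With your stated bound the superposition only gives $\|\varphi_r\|_{1,h}^2\lesssim \sum_j h\, r_j^2$, whereas the square of the right-hand side of \eqref{stability_vp} is $\sum_j h\, r_j^2\,\mathrm{meas}(F_j)\sim \sum_j h^2 r_j^2$, smaller by a factor of $h$; your closing remark that ``the scaling matches'' glosses over exactly this mismatch, and your own boundary-term computation (which correctly produces $h^2$ for the square) already contradicts the claimed $h^{1/2}$. The repair is immediate and is what the paper does: with the normalization $\Xi_j^{-1}\lesssim h$ one gets $\|\nabla\varphi_j\|\lesssim C_I h^{-1}\Xi_j^{-1}\mathrm{meas}(P_j)^{1/2}\lesssim h$, and then $\sum_j r_j^2\|\varphi_j\|_{1,h}^2\lesssim \sum_j h^2 r_j^2$ matches \eqref{stability_vp}. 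The rest of your outline --- the uniform non-degeneracy of the averaged normal-derivative functional on each patch (the paper's lower bound $C_\Xi\le \Xi_j h$) being the genuine technical point, guaranteed by the ``four inner nodes'' and ``five faces per $\Gamma_i$'' assumptions --- is accurate.
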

\begin{proof}
We first construct a function $\tilde \varphi_j$ taking the value $1$
in the interior nodes of $\partial \Omega \cap \partial P_j$ and zero elsewhere.
Fix $j$ and let $\tilde \varphi_j \in V_h^1$ be defined, in each vertex $x_i \in
\mathcal{T}_h$, by
\[
\tilde \varphi_j(x_i) = \left\{ \begin{array}{ll}
0 & \mbox{ for } x_i \in K \mbox{ such that $K$ has three vertices on
  $\partial \Omega$};  \\
0 &  \mbox{ for } x_i  \in \Omega \setminus \overset{\circ}{P_j};\\
1 & \mbox{ for } x_i \in \overset{\circ}{F_j}.
\end{array}
\right.
\]
Let 
\[
\Xi_j := \mbox{meas}(F_j)^{-1} \int_{F_j} \nabla \tilde \varphi_j \cdot
n ~ \mbox{d}s
\]
and define the normalised function $\varphi_j$ by
\[
\varphi_j := \Xi_j^{-1} \tilde \varphi_j.
\]
This quantity is well defined thanks to the following lower bound
that holds uniformly in $j$ and $h$
\[
C_\Xi \leq \Xi_j h.
\]
The constant $C_\Xi$ only depends on the local geometry of the patches
$P_j$. 
By definition there holds
\begin{equation}\label{average_vp}
 \mbox{meas}(F_j)^{-1} \int_{F_j} \nabla \varphi_j \cdot
n ~ \mbox{d}s = 1
\end{equation}
and using the standard inverse inequality (Lemma \ref{inverse_ineq})
\begin{equation}\label{inverse_vp}
\|\nabla \varphi_j\| \lesssim C_I h^{-1} \Xi_j^{-1} \|\tilde
\varphi_j\|_{L^2(P_j)} \lesssim C_I h^{-1} \Xi_j^{-1}
\mbox{meas}(P_j)^{1/2}\lesssim C_I C^{-1}_\Xi  h.
\end{equation}
Now defining 
\[
\varphi_r :=
\sum_{j=1}^{N_P} r_j \varphi_j
\]
we immediately see that condition \eqref{constraint_vp} is satisfied
by equation \eqref{average_vp}. The upper bound
\eqref{stability_vp} follows from
\eqref{inverse_vp}, the relation \eqref{Pcond}
and using that
\begin{multline*}
\|\varphi_r\|_{\frac12,h,\partial \Omega}^2 := \sum_{j=1}^{N_P}
\|h^{-\frac12} r_j \varphi_j\|^2_{L^2(F_j)} \\
\lesssim
\sum_{j=1}^{N_P} h^{-1} r_j^2 \Xi^{-2}_j\| \tilde
\varphi_j\|^2_{L^2(F_j)} \lesssim C_{\Xi}^{-2}\sum_{j=1}^{N_P} \|h^{\frac12} r_j \|^2_{L^2(F_j)}.
\end{multline*}
\end{proof}

With the help of this technical lemma it is straightforward to prove
the inf-sup condition for the formulation \eqref{FEM}.
\begin{theorem}\label{infsup}
There exists $c_s>0$ such that for all functions $v_h \in
V_h^k$ there holds
\[
c_s \|v_h\|_{1,h} \leq \sup_{w_h \in V_h^k} \frac{a_h(v_h,w_h)}{\|w_h\|_{1,h}}.
\]
\end{theorem}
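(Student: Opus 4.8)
The plan is to prove the inf-sup condition constructively: given $v_h \in V_h^k$, I will exhibit a test function $w_h = v_h + \gamma \varphi_r \in V_h^k$, with $\gamma > 0$ a fixed $h$-independent constant and $\varphi_r$ the function furnished by Lemma~\ref{spec_func}, such that $a_h(v_h,w_h) \gtrsim \|v_h\|_{1,h}^2$ while $\|w_h\|_{1,h} \lesssim \|v_h\|_{1,h}$; dividing then yields the statement with $c_s$ the product of the hidden constants. The starting point is that testing with $v_h$ itself makes the two boundary terms in \eqref{disc_biform} cancel, so $a_h(v_h,v_h) = \|\nabla v_h\|^2$: this already controls the gradient part of $\|v_h\|_{1,h}$, and the whole difficulty is to recover the boundary seminorm $\|v_h\|_{\frac12,h,\partial\Omega}$.

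I would first reduce this boundary seminorm to the patch averages $\bar v_h^j$. On each $F_j$ — a connected union of $O(1)$ element faces of length $\simeq h$ — a scaled one-dimensional Poincar\'e inequality gives $\|v_h - \bar v_h^j\|_{L^2(F_j)} \lesssim h \|\nabla v_h\|_{L^2(F_j)}$, while Lemma~\ref{trace_ineq} combined with Lemma~\ref{inverse_ineq} applied to the polynomial $\nabla v_h$ bounds $\|\nabla v_h\|^2_{L^2(F_j)} \lesssim h^{-1}\|\nabla v_h\|^2_{L^2(\omega_j)}$, where $\omega_j$ is the (uniformly bounded) set of elements meeting $F_j$. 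Summing over $j$ with the finite overlap of the $\omega_j$ yields $h^{-1}\sum_j \|v_h - \bar v_h^j\|^2_{L^2(F_j)} \lesssim \|\nabla v_h\|^2$. Since $\|v_h\|^2_{\frac12,h,\partial\Omega} \simeq h^{-1}\|v_h\|^2_{L^2(\partial\Omega)}$ by quasi-uniformity and $\|\bar v_h^j\|_{L^2(F_j)} \le \|v_h\|_{L^2(F_j)}$, this shows that, modulo the already-controlled gradient, the boundary seminorm is equivalent to $S := \sum_j (\bar v_h^j)^2 \,\mathrm{meas}(F_j)$; precisely, $\|v_h\|^2_{\frac12,h,\partial\Omega} \lesssim \|\nabla v_h\|^2 + h^{-1}S$ and $h^{-1}S \lesssim \|v_h\|^2_{\frac12,h,\partial\Omega}$.

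Next I would recover $h^{-1}S$ from the nonsymmetric term. Apply Lemma~\ref{spec_func} with $r_j := h^{-1}\bar v_h^j$, obtaining $\varphi_r \in V_h^1$ with $\mathrm{meas}(F_j)^{-1}\int_{F_j}\nabla\varphi_r\cdot n\,\mathrm{d}s = h^{-1}\bar v_h^j$ and, by \eqref{stability_vp} and \eqref{Pcond}, $\|\varphi_r\|_{1,h} \lesssim (h^{-1}S)^{1/2}$. In $a_h(v_h,\varphi_r)$, split $v_h = \bar v_h^j + (v_h - \bar v_h^j)$ on each $F_j$ inside the term $\langle v_h, \nabla\varphi_r\cdot n\rangle_{\partial\Omega}$: the constant part contributes exactly $\sum_j r_j \bar v_h^j\,\mathrm{meas}(F_j) = h^{-1}S$ by \eqref{constraint_vp}, and the oscillatory part is bounded, using the Poincar\'e/trace estimate above and the finite overlap, by $C\|\nabla v_h\|\,\|\varphi_r\|_{1,h}$. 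The two remaining terms $(\nabla v_h,\nabla\varphi_r)_\Omega$ and $\langle \nabla v_h\cdot n, \varphi_r\rangle_{\partial\Omega}$ are likewise bounded by $C\|\nabla v_h\|\,\|\varphi_r\|_{1,h}$ via Cauchy--Schwarz, Lemmas~\ref{trace_ineq} and \ref{inverse_ineq}, and $\|\varphi_r\|_{L^2(\partial\Omega)} \lesssim h^{1/2}\|\varphi_r\|_{1,h}$. Hence $a_h(v_h,\varphi_r) \ge h^{-1}S - C\|\nabla v_h\|\,(h^{-1}S)^{1/2}$.

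Finally I would combine the two test directions. With $w_h = v_h + \gamma\varphi_r$ one has $a_h(v_h,w_h) = \|\nabla v_h\|^2 + \gamma a_h(v_h,\varphi_r) \ge (1 - C\gamma)\|\nabla v_h\|^2 + \tfrac{\gamma}{2} h^{-1}S$ after a Young inequality absorbing the cross term; fixing $\gamma$ small but independent of $h$ gives $a_h(v_h,w_h) \gtrsim \|\nabla v_h\|^2 + h^{-1}S \gtrsim \|v_h\|^2_{1,h}$ by the reduction step, while $\|w_h\|_{1,h} \le \|v_h\|_{1,h} + \gamma\|\varphi_r\|_{1,h} \lesssim \|v_h\|_{1,h}$, again using $h^{-1}S \lesssim \|v_h\|^2_{1,h}$. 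Dividing finishes the proof. I expect the main obstacle to be the bookkeeping of the third step: choosing the scaling $r_j = h^{-1}\bar v_h^j$ so that the gain $h^{-1}S$ from the nonsymmetric term matches $\|\varphi_r\|_{1,h}^2 \simeq h^{-1}S$, and arranging every ``wrong-sign'' boundary term to carry a factor $\|\nabla v_h\|$ (supplied by testing with $v_h$) so that a single Young inequality together with a small $h$-independent $\gamma$ closes the argument. The Poincar\'e estimate on the boundary patches in the reduction step is the other delicate point, since it is precisely what makes the oscillatory part of the boundary seminorm come for free.
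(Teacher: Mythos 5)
Your proposal is correct and follows essentially the same route as the paper: the same choice $r_j=h^{-1}\bar v^j$ in Lemma~\ref{spec_func}, the same splitting $v_h=\bar v^j+(v_h-\bar v^j)$ in the nonsymmetric boundary term (your $h^{-1}S$ is exactly the paper's $\sum_j\|h^{-1/2}\bar v^j\|^2_{L^2(F_j)}$), the same Young-inequality absorption with a fixed small parameter, and the same stability bound on $w_h=v_h+\eta\varphi_r$. No substantive differences.
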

\begin{proof}
Recall that 
\[
a_h(v_h,w_h) = (\nabla v_h,\nabla w_h)_\Omega - \left<\nabla v_h \cdot n, w_h \right>_{\partial \Omega} + \left<v_h,\nabla w_h \cdot n \right>_{\partial \Omega}.
\]
Taking $w_h = v_h$ gives
\[
a_h(v_h,v_h)= \|\nabla v_h\|^2.
\]
To recover control over the boundary integral we let
\begin{equation}\label{rchoice}
r_j = h^{-1} \bar v^j  := h^{-1} \mbox{meas}(F_j)^{-1}\int_{F_j} v_h ~ \mbox{d}s
\end{equation}
in the construction of $\varphi_r$ in Lemma \ref{spec_func} and note that
\[
\left<v_h,\nabla \varphi_r \cdot n \right>_{\partial \Omega} =
\sum_{j=1}^{N_P} \left(\|h^{-1/2}\bar v^j\|^2_{L^2(F_j)}  + \left< (v_h - \bar v^j),\nabla \varphi_r \cdot n \right>_{F_j} \right).
\]
Using standard approximation, 
\begin{equation}\label{L2approx}
\|v_h - \bar v^j \|_{L^2(F_j)} \lesssim h \|\nabla v_h \times
n\|_{L^2(F_j)},
\end{equation}
 and by the trace and inverse inequalities of Lemma
\ref{trace_ineq} and Lemma \ref{inverse_ineq} we have
\[
\left< (v_h - \bar v^j),\nabla \varphi_r \cdot n \right>_{F_j} \lesssim C_T^2 (1+C_I)  \|\nabla v_h\|_{L^2(P_j)} \|\nabla \varphi_r\|_{L^2(P_j)}.
\]
Moreover since by Cauchy-Schwarz inequality and the trace inequality,
\[
|(\nabla v_h,\nabla w_h)_\Omega - \left<\nabla v_h \cdot n, w_h
\right>_{\partial \Omega}| \lesssim \|\nabla v_h\| \|w_h\|_{1,h}
\]
we deduce using the stability \eqref{stability_vp} that
\begin{multline*}
a_h(v_h,\varphi_r) \ge \sum_{j=1}^{N_P} \|h^{-1/2} \bar v^j\|^2_{L^2(F_j)}
- C \|\nabla v_h\| \|\varphi_r\|_{1,h} \\ \geq \sum_{j=1}^{N_P}
\|h^{-1/2} \bar v^j\|^2_{L^2(F_j)}
-
C_s \|\nabla v_h\| \left(\sum_{j=1}^{N_P} \|h^{-1/2} \bar v^j\|^2_{L^2(F_j)}\right)^{1/2}.
\end{multline*}
We now fix $w_h = v_h + \eta \varphi_r$ and note that
\begin{multline}\label{positiv_is}
a_h(v_h,w_h) \ge \|\nabla v_h\|^2 + \eta \sum_{j=1}^{N_P} \|h^{-1/2} \bar
  v^j\|^2_{L^2(F_j)} \\ -
C_s \|\nabla v_h\| \eta \left(\sum_{j=1}^{N_P} \|h^{-1/2} \bar
  v^j\|^2_{L^2(F_j)}\right)^{1/2}\\
\ge (1-\epsilon) \|\nabla v_h\|^2 + \eta (1 - C_s^2 \eta/(4\epsilon)) \sum_{j=1}^{N_P} \|h^{-1/2} \bar
  v^j\|^2_{L^2(F_j)}.
\end{multline}
It follows, using once again the approximation properties of the
$L^2$-projection on the piece wise constants \eqref{L2approx}, that for any $\epsilon < 1$ we may take $\eta$ sufficiently
small so that there exists $c_{\eta,\epsilon}$ such that
\[
c_{\eta,\epsilon} \|v_h\|_{1,h}^2 \leq C c_{\eta,\epsilon} \left(\|\nabla
v_h\|^2+\sum_{j=1}^{N_P} \|h^{-1/2} \bar
  v^j\|^2_{L^2(F_j)}\right)\leq  a_h(v_h,w_h).
\]
We may conclude by noting that by \eqref{stability_vp}, our choice
of $r_j$ and the stability of the $L^2$-projection on piece wise
constants there holds
\begin{equation}\label{whstab}
\|w_h\|_{1,h} \leq \|v_h\|_{1,h} + \eta \|\varphi_r\|_{1,h} \leq C_\eta  \|v_h\|_{1,h}.
\end{equation}
\end{proof}
\section{A priori error estimates}
The stability estimate proved in the previous section together with
the Galerkin orthogonality of Lemma \ref{galortho} leads to error
estimates in the $\|\cdot\|_{1,h}$ norm in a straightforward
manner. First we will prove an auxiliary lemma for the continuity of
$a_h(\cdot,\cdot)$. To this end we introduce the norm
\[
\|u\|_* := \|u\|_{1,h} + \|h^{\frac12} \nabla u \cdot n\|_{L^2(\partial \Omega)}.
\]
\begin{lemma}\label{continuity}
Let $u \in H^2(\Omega) + V_h^k$ and $v_h \in V_h^k$.
Then the bilinear form $a_h(\cdot,\cdot)$ defined by \eqref{disc_biform}
satisfies
\[
a_h(u,v_h) \leq C \|u\|_* \|v_h\|_{1,h}.
\]
\end{lemma}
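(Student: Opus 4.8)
The plan is to expand $a_h(u,v_h)$ term by term according to its definition \eqref{disc_biform} and bound each of the three contributions by the product $\|u\|_* \|v_h\|_{1,h}$, using Cauchy--Schwarz together with the trace and inverse inequalities of Lemmas~\ref{trace_ineq} and~\ref{inverse_ineq}. Recall
\[
a_h(u,v_h) = (\nabla u, \nabla v_h)_\Omega - \left<\nabla u \cdot n, v_h\right>_{\partial\Omega} + \left<u, \nabla v_h \cdot n\right>_{\partial\Omega}.
\]
The volume term is immediate: $(\nabla u,\nabla v_h)_\Omega \leq \|\nabla u\| \|\nabla v_h\| \leq \|u\|_* \|v_h\|_{1,h}$.

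For the second term, $\left<\nabla u \cdot n, v_h\right>_{\partial\Omega}$, I would split the boundary integral into a sum over boundary elements $K$ and apply Cauchy--Schwarz on each $\partial\Omega \cap \partial K$, inserting the scaling $h_K^{1/2} \cdot h_K^{-1/2}$ so as to pair $\|h_K^{1/2} \nabla u \cdot n\|_{L^2(\partial\Omega \cap \partial K)}$ with $\|h_K^{-1/2} v_h\|_{L^2(\partial\Omega \cap \partial K)}$; summing and using Cauchy--Schwarz in the element index gives a bound by $\|h^{1/2}\nabla u \cdot n\|_{L^2(\partial\Omega)} \, \|v_h\|_{\frac12,h,\partial\Omega} \leq \|u\|_* \|v_h\|_{1,h}$. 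For the third term, $\left<u, \nabla v_h \cdot n\right>_{\partial\Omega}$, I pair $\|h_K^{-1/2} u\|_{L^2(\partial\Omega\cap\partial K)}$ with $\|h_K^{1/2}\nabla v_h \cdot n\|_{L^2(\partial\Omega\cap\partial K)}$; the first factor sums to $\|u\|_{\frac12,h,\partial\Omega} \leq \|u\|_{1,h} \leq \|u\|_*$, while the second is handled by the trace inequality (Lemma~\ref{trace_ineq}) followed, on the gradient term it produces, by the inverse inequality (Lemma~\ref{inverse_ineq}) when $v_h \in V_h^k$, yielding $\|h_K^{1/2}\nabla v_h \cdot n\|_{L^2(\partial\Omega\cap\partial K)} \lesssim \|\nabla v_h\|_{L^2(K)}$; summing over $K$ gives $\lesssim \|\nabla v_h\| \leq \|v_h\|_{1,h}$.

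Collecting the three estimates yields $a_h(u,v_h) \leq C \|u\|_* \|v_h\|_{1,h}$, as claimed. The only slightly delicate point is that $u$ is taken in $H^2(\Omega) + V_h^k$ rather than in $V_h^k$, so the trace term $\|h^{1/2}\nabla u \cdot n\|_{L^2(\partial\Omega)}$ cannot be absorbed into $\|u\|_{1,h}$ via an inverse inequality — this is precisely why it appears as a separate summand in the definition of $\|\cdot\|_*$. I do not expect any real obstacle here: the argument is a routine application of Cauchy--Schwarz and the quoted local inequalities, and the norm $\|\cdot\|_*$ has been engineered to make every term match up.
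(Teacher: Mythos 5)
Your proposal is correct and follows essentially the same route as the paper, which disposes of the lemma in one line by Cauchy--Schwarz and the trace inequality of Lemma~\ref{trace_ineq}; you have simply written out the term-by-term pairing (including the inverse inequality needed to absorb the second-derivative contribution in $\|h_K^{1/2}\nabla v_h\cdot n\|_{L^2(\partial K)}$) that the paper leaves implicit. No gaps.
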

\begin{proof}
The result is immediate by application of the Cauchy-Schwarz
inequality and the trace inequality of Lemma \ref{trace_ineq}.
\end{proof}
\begin{proposition}\label{apriori_diff}
Let $u \in H^{k+1}(\Omega)$ be the solution of
\eqref{poisson} and $u_h$ the solution of \eqref{FEM}.
Then there holds
\[
\|u-u_h\|_{1,h} \leq C h^k |u|_{H^{k+1}(\Omega)}.
\]
\end{proposition}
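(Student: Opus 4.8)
The plan is to follow the classical Strang-type argument built on the discrete inf-sup stability of Theorem \ref{infsup} combined with the Galerkin orthogonality of Lemma \ref{galortho}. First I would let $\pi_h u \in V_h^k$ denote the standard Lagrange (or Scott--Zhang) interpolant of $u$, and split the error as $u - u_h = (u - \pi_h u) + (\pi_h u - u_h)$. The first term $u - \pi_h u$ is controlled directly by standard interpolation estimates, so the work is to bound $e_h := \pi_h u - u_h \in V_h^k$ in the $\|\cdot\|_{1,h}$ norm.

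Next I would apply Theorem \ref{infsup} to $e_h$: there is $w_h \in V_h^k$ with $\|w_h\|_{1,h} \le 1$ (after normalisation) such that $c_s \|e_h\|_{1,h} \le a_h(e_h, w_h)$. Writing $a_h(e_h, w_h) = a_h(\pi_h u - u, w_h) + a_h(u - u_h, w_h)$ and using Galerkin orthogonality (Lemma \ref{galortho}) to kill the second term, we get $c_s \|e_h\|_{1,h} \le a_h(\pi_h u - u, w_h)$. The continuity estimate of Lemma \ref{continuity} then gives $a_h(\pi_h u - u, w_h) \le C \|\pi_h u - u\|_* \|w_h\|_{1,h} \le C \|\pi_h u - u\|_*$.

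It remains to estimate $\|\pi_h u - u\|_* = \|\pi_h u - u\|_{1,h} + \|h^{1/2} \nabla(\pi_h u - u)\cdot n\|_{L^2(\partial\Omega)}$ by standard interpolation theory. For the $\|\cdot\|_{1,h}$ part: $\|\nabla(u - \pi_h u)\| \lesssim h^k |u|_{H^{k+1}(\Omega)}$, and the boundary term $\|u - \pi_h u\|_{1/2,h,\partial\Omega}$, which sums $h_K^{-1}\|u - \pi_h u\|^2_{L^2(\partial\Omega\cap\partial K)}$, is handled by the trace inequality of Lemma \ref{trace_ineq} applied to $u - \pi_h u$ together with elementwise interpolation bounds, yielding again an $h^k |u|_{H^{k+1}(\Omega)}$ bound (here one uses that $\|u - \pi_h u\|_{L^2(K)} \lesssim h_K^{k+1}|u|_{H^{k+1}(K)}$ and $\|\nabla(u-\pi_h u)\|_{L^2(K)} \lesssim h_K^{k}|u|_{H^{k+1}(K)}$). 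The extra term $\|h^{1/2}\nabla(u - \pi_h u)\cdot n\|_{L^2(\partial\Omega)}$ is controlled the same way via the trace inequality on $\nabla(u - \pi_h u)$, costing one more power of $h$ against $\|\nabla(u-\pi_h u)\|$ and picking up the $h^2$-scaled second-derivative term, and again reduces to $h^k|u|_{H^{k+1}(\Omega)}$. Combining, $\|e_h\|_{1,h} \lesssim h^k|u|_{H^{k+1}(\Omega)}$, and a final triangle inequality with the interpolation estimate for $u - \pi_h u$ gives the claim.

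The only mildly delicate point — and the step I would be most careful about — is verifying that the trace-type term in $\|\cdot\|_*$ applied to the interpolation error is genuinely $O(h^k)$ and not merely $O(h^{k-1/2})$; this works because $\|h^{1/2}\nabla(u-\pi_h u)\cdot n\|_{L^2(\partial K\cap\partial\Omega)}^2 \lesssim \|\nabla(u-\pi_h u)\|_{L^2(K)}^2 + h_K^2 |\nabla(u-\pi_h u)|_{H^1(K)}^2$, both of which are $O(h_K^{2k})|u|^2_{H^{k+1}(K)}$, so summing over boundary elements preserves the rate. No loss of half a power occurs in the $\|\cdot\|_{1,h}$ estimate; that loss will only appear later in the $L^2$ estimate via a duality argument, where the adjoint problem is not consistent for the non-symmetric form.
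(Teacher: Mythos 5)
Your proposal is correct and follows essentially the same route as the paper: interpolate $u$ (the paper uses the Scott--Zhang interpolant), split the error by the triangle inequality, bound the discrete part via the inf-sup condition of Theorem \ref{infsup} together with the Galerkin orthogonality of Lemma \ref{galortho} and the continuity of Lemma \ref{continuity}, and finish with interpolation estimates in the $\|\cdot\|_*$ norm. The additional detail you supply on the scaled trace estimate for the interpolation error is exactly the content the paper dismisses as ``straightforward to show,'' and your scaling is right.
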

\begin{proof}
Let $i_{\tt SZ}^k u$ denote the Scott-Zhang interpolant of $u$
\cite{SZ90}. Using the approximation properties of the interpolant it
is
straightforward to show that $$\|u - i_{\tt SZ}^k u\|_{1,h}+\|u - i_{\tt SZ}^k u\|_* \lesssim h^k |u|_{H^{k+1}(\Omega)}.$$
We therefore use the triangle inequality to obtain
\[
\|u-u_h\|_{1,h}  \leq \|u-i_{\tt SZ}^k u\|_{1,h} +\|u_h-i_{\tt SZ}^k u\|_{1,h},
\]
where only the second term needs to be bounded.
To this end we apply the result of Theorem \ref{infsup} followed by the
consistency of Lemma \ref{galortho} 
\[
c_s \|u_h-i_{\tt SZ}^k u\|_{1,h}\leq \sup_{w_h \in V_h^k} \frac{a_h(u_h-i_{\tt SZ}^k u,w_h)}{\|w_h\|_{1,h}} = \sup_{w_h \in V_h^k} \frac{a_h(u-i_{\tt SZ}^k u,w_h)}{\|w_h\|_{1,h}}.
\]
By the continuity of Lemma \ref{continuity} and the approximation
properties of $i_{\tt SZ}^k u$ we conclude
\[
c_s \|u_h-i_{\tt SZ}^k u\|_{1,h}\lesssim \|u - i_{\tt SZ}^k u\|_* \lesssim h^k |u|_{H^{k+1}(\Omega)}.
\]
\end{proof}\\

For DG-methods it is well-known that the non-symmetric version may
suffer from suboptimality in the convergence of the error in the $L^2$-norm due to the lack of adjoint
consistency.
This is true also for the non-symmetric Nitsche's method, however
since the method is used on the scale of the domain and not of the
element the suboptimality may be reduced to $h^{\frac12}$ as we prove
below.
\begin{proposition}
Let $u \in H^{k+1}(\Omega)$ be the solution of
\eqref{poisson} and $u_h$ the solution of \eqref{FEM}.
Then 
\[
\|u-u_h\| \leq C h^{k+\frac12} |u|_{H^{k+1}(\Omega)}.
\]
\end{proposition}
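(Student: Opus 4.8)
The plan is to run an Aubin--Nitsche duality argument; the only new ingredient compared with the symmetric case is an extra boundary term coming from the failure of adjoint consistency, and it is precisely this term that is responsible for the loss of half an order.

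Since $\Omega$ is convex, introduce the solution $z$ of the dual problem $-\Delta z = u - u_h$ in $\Omega$, $z = 0$ on $\partial\Omega$, which satisfies $z \in H^2(\Omega)$ with $\|z\|_{H^2(\Omega)} \leq C_{R2}\|u-u_h\|$. Testing this equation with $u-u_h$ and using Green's formula gives
\[
\|u-u_h\|^2 = (\nabla(u-u_h),\nabla z)_\Omega - \left<u-u_h,\nabla z\cdot n\right>_{\partial\Omega},
\]
while, from the definition \eqref{disc_biform} and $z\vert_{\partial\Omega}=0$,
\[
a_h(u-u_h,z) = (\nabla(u-u_h),\nabla z)_\Omega + \left<u-u_h,\nabla z\cdot n\right>_{\partial\Omega}.
\]
Subtracting these two identities and then invoking Galerkin orthogonality (Lemma~\ref{galortho}) to subtract, for any $z_h \in V_h^k$, the vanishing term $a_h(u-u_h,z_h)$ — I would take $z_h := i_{\tt SZ}^1 z \in V_h^1 \subseteq V_h^k$ — we arrive at the working identity
\[
\|u-u_h\|^2 = a_h(u-u_h,z-z_h) - 2\left<u-u_h,\nabla z\cdot n\right>_{\partial\Omega},
\]
whose two terms on the right are then estimated separately.

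For the first term I would use the symmetric continuity bound $a_h(w,v)\lesssim\|w\|_*\|v\|_*$, valid for $w,v\in H^2(\Omega)+V_h^k$ and proved exactly as Lemma~\ref{continuity} by Cauchy--Schwarz on the three contributions in \eqref{disc_biform}; note $u-u_h\in H^2(\Omega)+V_h^k$ since $k\ge1$. Combining Proposition~\ref{apriori_diff} with a triangle inequality and the trace and inverse inequalities (Lemmas~\ref{trace_ineq} and~\ref{inverse_ineq}) applied to the finite element function $i_{\tt SZ}^k u - u_h$ gives $\|u-u_h\|_*\lesssim h^k|u|_{H^{k+1}(\Omega)}$, and the approximation properties of the Scott--Zhang interpolant (as in the proof of Proposition~\ref{apriori_diff}) give $\|z-z_h\|_*\lesssim h|z|_{H^2(\Omega)}\lesssim h\|u-u_h\|$. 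Hence $a_h(u-u_h,z-z_h)\lesssim h^{k+1}|u|_{H^{k+1}(\Omega)}\|u-u_h\|$, which is of higher order than required. Galerkin orthogonality is essential here: without passing from $z$ to $z-z_h$ the first term would only be $O(h^k\|u-u_h\|)$.

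For the boundary term I would apply the $h$-weighted Cauchy--Schwarz inequality
\[
\left|\left<u-u_h,\nabla z\cdot n\right>_{\partial\Omega}\right| \leq \|u-u_h\|_{\frac12,h,\partial\Omega}\,\|h^{\frac12}\nabla z\cdot n\|_{L^2(\partial\Omega)},
\]
bound the first factor by $\|u-u_h\|_{\frac12,h,\partial\Omega}\leq\|u-u_h\|_{1,h}\lesssim h^k|u|_{H^{k+1}(\Omega)}$ using Proposition~\ref{apriori_diff}, and bound the second factor by noting that, since $z\in H^2(\Omega)$, the normal trace satisfies $\|\nabla z\cdot n\|_{L^2(\partial\Omega)}\lesssim\|z\|_{H^2(\Omega)}\lesssim\|u-u_h\|$, so that $\|h^{\frac12}\nabla z\cdot n\|_{L^2(\partial\Omega)}\leq h^{\frac12}\|\nabla z\cdot n\|_{L^2(\partial\Omega)}\lesssim h^{\frac12}\|u-u_h\|$. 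Substituting into the working identity yields $\|u-u_h\|^2\lesssim h^{k+\frac12}|u|_{H^{k+1}(\Omega)}\|u-u_h\|$, and dividing by $\|u-u_h\|$ gives the claim. The main obstacle, and the sole source of suboptimality, is exactly this boundary term: because adjoint consistency fails it survives Galerkin orthogonality, and the only control of $u-u_h$ on $\partial\Omega$ supplied by the energy estimate is the $O(h^k)$ bound on $\|h^{-\frac12}(u-u_h)\|_{L^2(\partial\Omega)}$, equivalently $\|u-u_h\|_{L^2(\partial\Omega)}=O(h^{k+\frac12})$, half an order short of the $O(h^{k+1})$ that would be needed for an optimal bound. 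The loss is confined to $h^{\frac12}$, rather than a full power as for DG methods, because the weight on $\nabla z\cdot n$ is $h^{\frac12}$, i.e. measured on the scale of the domain and not of the element.
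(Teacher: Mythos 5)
Your proposal is correct and follows essentially the same route as the paper: the Aubin--Nitsche duality argument with the adjoint-inconsistency boundary term $\left<u-u_h,\nabla z\cdot n\right>_{\partial\Omega}$ estimated through the weighted trace norm, which is exactly where the half power of $h$ is lost. The only cosmetic difference is in bounding $a_h(u-u_h,z-z_h)$: you invoke a symmetric continuity bound $a_h(w,v)\lesssim\|w\|_*\|v\|_*$ and therefore need the (easily obtained) estimate $\|u-u_h\|_*\lesssim h^k|u|_{H^{k+1}(\Omega)}$, whereas the paper uses $(z-i_{\tt SZ}^1 z)\vert_{\partial\Omega}\equiv 0$ to drop the term involving $\nabla(u-u_h)\cdot n$ and gets by with $\|u-u_h\|_{1,h}$ alone.
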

\begin{proof}
Let $z$ satisfy the adjoint problem
\[
\left\{\begin{array}{rcll}
-\Delta z &=& u - u_h & \mbox{in } \Omega,\\
z &=& 0 & \mbox{on } \partial \Omega.
\end{array}\right.
\]
Under the assumptions on $\Omega$ we know that $\|z\|_{H^2(\Omega)}
\leq C_{R2} \|u - u_h\|$.
It follows that
\begin{multline*}
\|u - u_h\|^2 = (u- u_h,-\Delta z)_{\Omega} = (\nabla (u-u_h), \nabla z)
_{\Omega} - \left< u- u_h,\nabla z \cdot n\right>_{\partial \Omega}\\
 = a_h(u-u_h,z) + 2 \left< u- u_h,\nabla z \cdot n\right>_{\partial \Omega}.
\end{multline*}
By Lemma \ref{galortho} and a continuity argument similar to that of Lemma
\ref{continuity}, using that $(z - i^1_{\tt SZ} z)\vert_{\partial \Omega} \equiv
0$, it follows that 
\begin{multline}\label{bilinform}
a_h(u-u_h,z) = a_h(u-u_h,z - i_{\tt SZ}^1 z) \\
= (\nabla (u-u_h),\nabla (z - i_{\tt SZ}^1
z))_{\Omega} -  \left< u- u_h,\nabla (z - i_{\tt SZ}^1 z) \cdot
  n\right>_{\partial \Omega} \\
\lesssim  \|u-u_h\|_{1,h} \|z-i_{\tt SZ}^1 z\|_*\\
\lesssim h \|u-u_h\|_{1,h} |z|_{H^2(\Omega)}.
\end{multline}
We also have, using the following global trace inequality 
$$
\|\nabla z \cdot n\|_{L^2(\partial \Omega)} \lesssim \|z\|_{H^2(\Omega)},
$$
that 
\begin{equation}\label{nonconst}
|\left< u- u_h,\nabla z \cdot n\right>_{\partial \Omega}| \lesssim h^{1/2}
\|u- u_h\|_{\frac12,h,\partial \Omega} \|z\|_{H^2(\Omega)}.
\end{equation}
Collecting the inequalities \eqref{bilinform} and \eqref{nonconst} we
arrive at the estimate
\[
\|u-u_h\|^2 \lesssim (h + h^{1/2}) h^k |u|_{H^{k+1}(\Omega)} \|z\|_{H^2(\Omega)}
\] 
and we conclude by applying the regularity estimate $\|z\|_{H^2(\Omega)}
\leq C_{R2} \|u - u_h\|$.
\end{proof}
\section{The convection--diffusion problem}
Since the method we discuss leads to a non-symmetric
system matrix the main interest of the method is for solving flow
problems where an advection term makes the problem non-symmetric
anyway. Note that there appears to be no analysis that is robust with
respect to the P\'eclet number, even in the case of the non-symmetric 
discontinuous Galerkin method.

We will therefore now show how the above analysis can be
extended to the case of convection--diffusion equations yielding
optimal stability and accuracy both in the convection and the
diffusion dominated regime. We will consider the following
convection--diffusion--reaction equation:
\begin{equation}\label{conv_diff}
\sigma u + \beta\cdot \nabla u - \varepsilon \Delta u = f \mbox{ in } \Omega,
\end{equation}
and homogeneous Dirichlet boundary conditions. We
assume that  $\beta \in [W^1_\infty(\Omega)]^2$, $\sigma \in \mathbb{R}$,
$$
\sigma - \frac12 \nabla \cdot
\beta \ge c_\sigma \ge 0
$$ and $\varepsilon \in
\mathbb{R}^+$.
 In this case the formulation writes: find $u_h \in V_h$ such that
\begin{multline}\label{convdiffFEM}
A_h(u_h,v_h) := (\sigma u_h + \beta \cdot \nabla u_h, v_h)_\Omega - \left< \beta \cdot n,
  u_h,v_h\right>_{\partial \Omega^-}\\ + \varepsilon a_h(u_h,v_h) =
(f,v_h)_\Omega,\quad \forall v_h \in V_h
\end{multline}
where $\partial \Omega^\pm := \{x \in \partial \Omega : \pm \beta
\cdot n > 0\}$.
First note that the positivity of the form now writes
\begin{equation}\label{pos2}
A_h(u_h,u_h) \ge \frac12 \| |\beta \cdot n|^{\frac12} u_h\|^2_{\partial \Omega}
+ \|\varepsilon^{\frac12} \nabla u_h\|^2,
\end{equation}
hence provided $|\beta \cdot n|>0$ on some portion of the boundary 
with non-zero measure the matrix is invertible. In the following we
assume that this is the case, but we do not assume that $|\beta \cdot
n|>0$ everywhere on $\partial \Omega$.
To prove optimal error
estimates in general we require stronger stability results of the type
proved above to hold. It appears difficult to prove these stronger
results
independently of the flow regime. Indeed it is convenient
to characterize the flow using the local P\'eclet number:
\[
Pe := \frac{|\beta| h}{\varepsilon}.
\] 
If $Pe<1$ the flow is said to be diffusion dominated and if $Pe>1$ we
say that it is convection dominated. We will now treat these two cases
separately.

In view of the equality \eqref{pos2} we introduce the following
strengthened norm
\[
\|v_h\|^2_{1,h,\beta}:= \varepsilon \|v_h\|_{1,h}^2 + \frac12 \| |\beta \cdot n|^{\frac12} v_h\|^2_{\partial \Omega}.
\]
This norm is suitable in the diffusion dominated regime, but will be
modified by the introduction of stabilization when the convection dominated
regime is considered.
\subsection{Diffusion dominated regime $Pe<1$}
In this case we may prove an inf-sup condition similar to that of
Theorem \ref{infsup}. For simplicity we assume that $\sigma=0$.
\begin{proposition}(Inf-sup for convection--diffusion, $Pe<1$.)
\label{diff_conv_infsup}
For all functions $v_h \in
V_h^k$ there holds
\begin{equation}\label{infsup2}
c_s \|v_h\|_{1,h,\beta} \leq  \sup_{w_h \in V_h^k} \frac{A_h(v_h,w_h)}{\|w_h\|_{1,h,\beta}}.
\end{equation}
\end{proposition}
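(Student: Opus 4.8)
The plan is to follow the proof of Theorem~\ref{infsup} almost line by line, the only new feature being the treatment of the two convective terms. Testing with $w_h=v_h$ and invoking the positivity relation \eqref{pos2} already gives control of $\varepsilon\|\nabla v_h\|^2$ and of $\frac12\||\beta\cdot n|^{1/2}v_h\|^2_{\partial\Omega}$, so the only part of $\|v_h\|^2_{1,h,\beta}$ still missing is the Nitsche boundary term $\varepsilon\|v_h\|^2_{\frac12,h,\partial\Omega}$. As before I would recover it by choosing $r_j=h^{-1}\bar v^j$ as in \eqref{rchoice}, building $\varphi_r\in V_h^1$ via Lemma~\ref{spec_func}, setting $w_h=v_h+\eta\varphi_r$, and expanding $A_h(v_h,w_h)=A_h(v_h,v_h)+\eta A_h(v_h,\varphi_r)$. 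The pure diffusion contribution $\varepsilon a_h(v_h,\varphi_r)$ is estimated verbatim as in Theorem~\ref{infsup}, giving
\[
\varepsilon a_h(v_h,\varphi_r) \ge \varepsilon\sum_{j=1}^{N_P}\|h^{-1/2}\bar v^j\|^2_{L^2(F_j)} - C\,\varepsilon\|\nabla v_h\|\Big(\sum_{j=1}^{N_P}\|h^{-1/2}\bar v^j\|^2_{L^2(F_j)}\Big)^{1/2},
\]
recalling that $\|h^{1/2}r_j\|_{L^2(F_j)}=\|h^{-1/2}\bar v^j\|_{L^2(F_j)}$. It therefore suffices to show that $(\beta\cdot\nabla v_h,\varphi_r)_\Omega$ and $\langle\beta\cdot n,v_h,\varphi_r\rangle_{\partial\Omega^-}$ are of the same structure, so that the Young's inequality manipulation leading to \eqref{positiv_is} still applies.

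This is where the hypothesis $Pe<1$ enters, in the form $\|\beta\|_{L^\infty(\Omega)}h\lesssim\varepsilon$. For the volume term I would use Cauchy--Schwarz together with a sharpening of \eqref{stability_vp}, namely $\|\varphi_r\|\lesssim h\big(\sum_j\|h^{1/2}r_j\|^2_{L^2(F_j)}\big)^{1/2}$, which is immediate from the construction in Lemma~\ref{spec_func} since each $\varphi_j$ is supported in $P_j$ and $\|\varphi_j\|_{L^2(P_j)}=\Xi_j^{-1}\|\tilde\varphi_j\|_{L^2(P_j)}\lesssim h^2$; this yields $|(\beta\cdot\nabla v_h,\varphi_r)_\Omega|\lesssim\|\beta\|_{L^\infty(\Omega)}h\,\|\nabla v_h\|\big(\sum_j\|h^{-1/2}\bar v^j\|^2_{L^2(F_j)}\big)^{1/2}\lesssim\varepsilon\|\nabla v_h\|\big(\sum_j\|h^{-1/2}\bar v^j\|^2_{L^2(F_j)}\big)^{1/2}$, exactly the structure of the diffusion term above. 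For the boundary term I would bound $\||\beta\cdot n|^{1/2}\varphi_r\|^2_{\partial\Omega}\le\|\beta\|_{L^\infty(\Omega)}\|\varphi_r\|^2_{L^2(\partial\Omega)}\lesssim\|\beta\|_{L^\infty(\Omega)}h\,\|\varphi_r\|^2_{\frac12,h,\partial\Omega}\lesssim\varepsilon\sum_j\|h^{-1/2}\bar v^j\|^2_{L^2(F_j)}$, so that $|\langle\beta\cdot n,v_h,\varphi_r\rangle_{\partial\Omega^-}|\lesssim\||\beta\cdot n|^{1/2}v_h\|_{\partial\Omega}\,\varepsilon^{1/2}\big(\sum_j\|h^{-1/2}\bar v^j\|^2_{L^2(F_j)}\big)^{1/2}$; this is absorbable because \eqref{pos2} has already produced $\frac12\||\beta\cdot n|^{1/2}v_h\|^2_{\partial\Omega}$.

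Putting the pieces together, $A_h(v_h,w_h)$ is bounded below by $\frac12\||\beta\cdot n|^{1/2}v_h\|^2_{\partial\Omega}+\varepsilon\|\nabla v_h\|^2+\eta\varepsilon\sum_j\|h^{-1/2}\bar v^j\|^2_{L^2(F_j)}$ minus $C\eta\big(\varepsilon\|\nabla v_h\|+\varepsilon^{1/2}\||\beta\cdot n|^{1/2}v_h\|_{\partial\Omega}\big)\big(\sum_j\|h^{-1/2}\bar v^j\|^2_{L^2(F_j)}\big)^{1/2}$; choosing $\eta$ small and applying Young's inequality exactly as in \eqref{positiv_is}, then recovering $\varepsilon\|v_h\|^2_{\frac12,h,\partial\Omega}$ from $\varepsilon\|\nabla v_h\|^2$ and $\varepsilon\sum_j\|h^{-1/2}\bar v^j\|^2_{L^2(F_j)}$ via \eqref{L2approx} as in Theorem~\ref{infsup}, gives $\|v_h\|^2_{1,h,\beta}\lesssim A_h(v_h,w_h)$. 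Finally one checks $\|w_h\|_{1,h,\beta}\lesssim\|v_h\|_{1,h,\beta}$: the bound $\varepsilon\|\varphi_r\|^2_{1,h}\lesssim\varepsilon\sum_j\|h^{-1/2}\bar v^j\|^2_{L^2(F_j)}\lesssim\varepsilon\|v_h\|^2_{\frac12,h,\partial\Omega}$ follows from \eqref{stability_vp} and the stability of the $L^2$-projection onto piecewise constants, while $\||\beta\cdot n|^{1/2}\varphi_r\|^2_{\partial\Omega}\lesssim\varepsilon\sum_j\|h^{-1/2}\bar v^j\|^2_{L^2(F_j)}$ was established above, and both right-hand sides are $\lesssim\|v_h\|^2_{1,h,\beta}$. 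The main obstacle is the careful bookkeeping of the $\varepsilon$ and $h$ weights needed to make the two convective terms absorbable and to keep $c_s$ independent of $\varepsilon$ throughout the regime $Pe<1$; the only tool not already present in the text is the $h$-sharp $L^2(\Omega)$ estimate for $\varphi_r$.
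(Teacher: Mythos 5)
Your proposal is correct and follows essentially the same route as the paper: the paper likewise reduces the problem to bounding the two convective terms $(\beta\cdot\nabla v_h,\eta\varphi_r)_\Omega$ and $\left<\beta\cdot n\, v_h,\eta\varphi_r\right>_{\partial\Omega^-}$ (isolated in Lemma~\ref{conv_diff_infsup}), absorbs them using $Pe<1$ exactly as you do, and concludes with the same choice $w_h=v_h+\eta\varphi_r$ and the same stability check on $\|w_h\|_{1,h,\beta}$. Your ``sharpened'' estimate $\|\varphi_r\|\lesssim h\bigl(\sum_j\|h^{1/2}r_j\|^2_{L^2(F_j)}\bigr)^{1/2}$ is just the paper's scaling bound $\|h^{-1}\varphi_r\|\lesssim\|\nabla\varphi_r\|$ combined with \eqref{stability_vp}, so no genuinely new tool is involved.
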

Clearly, compared to the proof of Theorem \ref{infsup} we only need
to show how to handle the term
\[
(\beta \cdot \nabla v_h,\varphi_r)_\Omega - \left< \beta \cdot n\,
  v_h,\varphi_r\right>_{\partial \Omega^-}.
\]
The necessary bound on this term is given in the following Lemma.
\begin{lemma}\label{conv_diff_infsup}
Let $\varphi_r$ be the function of Lemma \ref{spec_func} with $r$ chosen as in
\eqref{rchoice}.
Then for $Pe<1$ there holds for all $\mu>0$ 
\begin{multline*}
(\beta \cdot \nabla v_h, \eta \varphi_r)_\Omega - \left< \beta \cdot n\,
  v_h, \eta \varphi_r\right>_{\partial \Omega^-} \\
\leq \mu
(\varepsilon \| \nabla v_h\|^2 +\||\beta \cdot n|^{\frac12}
v_h\|^2_{L^2(\partial \Omega)})  + C_{\partial}^2 (4\mu)^{-1}  \eta^2  \varepsilon \| v_h\|^2_{\frac12,h,\partial \Omega}.
\end{multline*}
\end{lemma}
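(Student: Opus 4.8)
The plan is to split the left-hand side into a volume term and a boundary term, and bound each against the two quantities on the right — $\varepsilon\|\nabla v_h\|^2 + \||\beta\cdot n|^{1/2} v_h\|^2_{L^2(\partial\Omega)}$ on the one hand, and $\varepsilon\|v_h\|^2_{\frac12,h,\partial\Omega}$ on the other — using Young's inequality to distribute the cross terms with the free parameter $\mu$. The key inputs are the stability bound \eqref{stability_vp} for $\varphi_r$, the inverse inequality \eqref{inverse_vp}-type estimate $\|\varphi_r\|\lesssim h\,(\cdots)$, and the assumption $Pe<1$, i.e. $|\beta| h \le \varepsilon$, which is exactly what converts $h$-powers into $\varepsilon$-powers.

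First I would treat the volume term $(\beta\cdot\nabla v_h,\eta\varphi_r)_\Omega$. By Cauchy--Schwarz, $|(\beta\cdot\nabla v_h,\eta\varphi_r)_\Omega| \le \eta\|\beta\|_\infty \|\nabla v_h\|\,\|\varphi_r\|$. Now $\|\varphi_r\|$ is controlled: by the construction in Lemma \ref{spec_func} each $\varphi_j$ is supported in the patch $P_j$ with $\|\varphi_j\|_{L^2(P_j)}\lesssim h\,\mathrm{meas}(P_j)^{1/2}\lesssim h^2$ (in 2D), so $\|\varphi_r\|^2 \lesssim \sum_j r_j^2 h^4 \lesssim h^3 \sum_j \|h^{1/2}r_j\|^2_{L^2(F_j)}$; with $r_j = h^{-1}\bar v^j$ this gives $\|\varphi_r\| \lesssim h \|v_h\|_{\frac12,h,\partial\Omega}$. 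Hence the volume term is bounded by $\eta\|\beta\|_\infty h \|\nabla v_h\| \|v_h\|_{\frac12,h,\partial\Omega} \le \eta\varepsilon\|\nabla v_h\|\|v_h\|_{\frac12,h,\partial\Omega}$ using $Pe<1$; Young's inequality then splits this into $\mu\varepsilon\|\nabla v_h\|^2$ plus $C_\partial^2(4\mu)^{-1}\eta^2\varepsilon\|v_h\|^2_{\frac12,h,\partial\Omega}$, which is the shape we want.

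For the boundary term $\left<\beta\cdot n\,v_h,\eta\varphi_r\right>_{\partial\Omega^-}$, I would estimate $|\beta\cdot n\,v_h|$ on $\partial\Omega^-$ by $|\beta\cdot n|^{1/2}\cdot|\beta\cdot n|^{1/2}|v_h|$, so the term is $\le \eta \||\beta\cdot n|^{1/2}v_h\|_{L^2(\partial\Omega)} \||\beta\cdot n|^{1/2}\varphi_r\|_{L^2(\partial\Omega)}$. Bound $\||\beta\cdot n|^{1/2}\varphi_r\|_{L^2(\partial\Omega)}^2 \le \|\beta\|_\infty \|\varphi_r\|^2_{L^2(\partial\Omega)}$; by a trace inequality on the patches and \eqref{stability_vp}, $\|\varphi_r\|_{L^2(\partial\Omega)} \lesssim h^{1/2}\|v_h\|_{\frac12,h,\partial\Omega}$ (roughly $\|\varphi_r\|_{L^2(F_j)}^2 \lesssim h r_j^2 \Xi_j^{-2} \lesssim h^3 r_j^2$, summed). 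So this term is $\lesssim \eta (\|\beta\|_\infty h)^{1/2} \||\beta\cdot n|^{1/2}v_h\|_{L^2(\partial\Omega)}\|v_h\|_{\frac12,h,\partial\Omega} \le \eta\varepsilon^{1/2}\||\beta\cdot n|^{1/2}v_h\|_{L^2(\partial\Omega)}\|v_h\|_{\frac12,h,\partial\Omega}$ after using $\|\beta\|_\infty h\le\varepsilon$ once more, and then Young's inequality again distributes it into $\mu\||\beta\cdot n|^{1/2}v_h\|^2_{L^2(\partial\Omega)}$ plus the $\varepsilon\|v_h\|^2_{\frac12,h,\partial\Omega}$ reservoir. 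Summing the two contributions and relabelling the constant as $C_\partial$ gives the claimed inequality.

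The main obstacle I anticipate is bookkeeping the $h$- versus $\varepsilon$-powers so that $Pe<1$ is used exactly enough times to absorb every stray $\|\beta\|_\infty$ and $h$ — in particular making sure the boundary-term estimate only needs $(\|\beta\|_\infty h)^{1/2}\le\varepsilon^{1/2}$ and not a full power of $Pe$, since otherwise one would not recover $\varepsilon\|v_h\|^2_{\frac12,h,\partial\Omega}$ cleanly on the right. The trace estimates for $\varphi_r$ on $\partial\Omega$ (as opposed to the volume estimate \eqref{stability_vp}) also need a small separate argument using the explicit nodal structure of $\tilde\varphi_j$, but this is routine given the lower bound $C_\Xi \le \Xi_j h$ already established in Lemma \ref{spec_func}.
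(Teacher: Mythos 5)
Your proposal follows essentially the same route as the paper: split the left-hand side into the volume term and the boundary term, control $\|\varphi_r\|\lesssim h\,\|v_h\|_{\frac12,h,\partial\Omega}$ and $\|\varphi_r\|_{\frac12,h,\partial\Omega}\lesssim \|v_h\|_{\frac12,h,\partial\Omega}$ via Lemma \ref{spec_func} and the choice \eqref{rchoice}, use $Pe<1$ to trade $|\beta|h$ for $\varepsilon$ (the paper keeps the explicit factors $Pe^2$ and $Pe^{1/2}$ and bounds them by $1$, which is the same thing), and conclude with the arithmetic--geometric (Young) inequality. There is a harmless exponent slip in your intermediate step ($\sum_j r_j^2 h^4 \lesssim h^{2}\sum_j\|h^{1/2}r_j\|^2_{L^2(F_j)}$, not $h^3$), but the final bound $\|\varphi_r\|\lesssim h\,\|v_h\|_{\frac12,h,\partial\Omega}$ that you actually use is the correct one and matches the paper's $\|h^{-1}\varphi_r\|\lesssim\|\nabla\varphi_r\|\leq C_\partial\|v_h\|_{\frac12,h,\partial\Omega}$.
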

\begin{proof}
Let 
\[
(\beta \cdot \nabla v_h, \eta \varphi_r)_\Omega - \left< \beta \cdot n\,
  v_h, \eta \varphi_r\right>_{\partial \Omega^-}  = T_1 + T_2.
\]
By the definition of the P\'eclet number and the Cauchy-Schwarz inequality, we have
\[
T_1 \leq Pe \varepsilon^{\frac12} \|\nabla v_h\| \eta
\varepsilon^{\frac12} \|h^{-1} \varphi_r\|.
\]
From the construction of $\varphi_r$,  a scaling argument, the
stability \eqref{stability_vp}  and the choice of $r$ \eqref{rchoice} we deduce that
\[
 \|h^{-1} \varphi_r\| \lesssim \|\nabla \varphi_r\| \leq C_\partial  \|v_h\|_{\frac12,h,\partial \Omega}.
\]
Using the arithmetic-geometric inequality we have
\[
T_1 \leq \mu \varepsilon\|\nabla v_h\|^2 + C_\partial^2 (4\mu)^{-1} Pe^2 \eta^2
\varepsilon  \|v_h\|_{\frac12,h,\partial \Omega}^2.
\]
For $T_2$ we have using a Cauchy-Schwarz inequality, the definition of
the P\'eclet number and the stability \eqref{stability_vp}
\begin{multline*}
T_2 \leq \||\beta \cdot n|^{\frac12} v_h\|_{L^2(\partial \Omega)}
Pe^{\frac12} \eta \varepsilon^{\frac12} \|\varphi_r\|_{\frac12,h,\partial
  \Omega} 
\leq C_\partial  \||\beta \cdot n|^{\frac12} v_h\|_{L^2(\partial \Omega)} \eta
\varepsilon^{\frac12} \|v_h\|_{\frac12,h,\partial
  \Omega}.
\end{multline*}
We apply the arithmetic-geometric inequality once again to conclude.
\end{proof}
\begin{proof} (Proposition \ref{diff_conv_infsup})
The inf-sup stability \eqref{infsup2} now follows by taking $w_h:=
v_h+\eta \varphi_r $ and proceeding as in
equation \eqref{positiv_is} using \eqref{pos2} and Lemma \ref{conv_diff_infsup} in
the following fashion
\begin{multline*}
A_h(v_h,v_h+\eta \varphi_r) \ge (1 - \epsilon - \mu) \varepsilon \|
\nabla v_h\|^2 + (\frac12 - \mu) \||\beta \cdot n|^{\frac12}
v_h\|^2_{L^2(\Omega)} \\
+ \eta (1 - C_s^2 \eta/(4 \epsilon) - C_\partial^2 \eta/(4 \mu)) \varepsilon \| v_h\|^2_{\frac12,h,\partial \Omega}.
\end{multline*}
We may now choose $\epsilon=1/4$ and $\mu=1/4$ and then $\eta$ small
enough so that positivity is ensured. Then
\[
A_h(v_h,v_h+\eta \varphi_r) \ge C_\eta \|v_h\|_{1,h,\beta}^2.
\]
 We conclude as in Theorem \ref{infsup}, but now using the norm $\|\cdot\|_{1,h,\beta}$,
\[\begin{split}
\|w_h\|_{1,h,\beta} \leq \|v_h\|_{1,h,\beta} + \eta \|\varphi_r\|_{1,h,\beta}
\leq \|v_h\|_{1,h,\beta}+ \eta C \|v_h\|_{1,h,\beta} + \eta \||\beta \cdot
n|^\frac12 \varphi_r\|_{L^2(\partial \Omega)}\\
\leq C \|v_h\|_{1,h,\beta} + Pe^{\frac12}\eta  \varepsilon^{\frac12}
\|\varphi_r\|_{1,h} \leq C_{Pe, \eta} \|v_h\|_{1,h,\beta}.
\end{split}
\]
\end{proof}

Proceeding as in Proposition \ref{apriori_diff}, this leads to optimal a priori
estimates in the norm $\|\cdot\|_{1,h}$ for $Pe<1$.
\begin{proposition}
Let $u \in H^{k+1}(\Omega)$ be the solution of
\eqref{conv_diff} and $u_h$ the solution of \eqref{convdiffFEM} and assume that $Pe<1$.
Then 
\[
\|u-u_h\|_{1,h} \leq C h^k |u|_{H^{k+1}(\Omega)}.
\]
\end{proposition}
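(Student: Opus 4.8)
The plan is to mimic the proof of Proposition~\ref{apriori_diff}, with the bilinear form $A_h$ in place of $a_h$ and Proposition~\ref{diff_conv_infsup} in place of Theorem~\ref{infsup}. First I would record the consistency relation: inserting the exact solution $u$ of \eqref{conv_diff} into \eqref{convdiffFEM} and integrating by parts --- using $u|_{\partial\Omega}\equiv 0$, which annihilates the boundary terms in $A_h(u,\cdot)$ --- gives $A_h(u-u_h,v_h)=0$ for all $v_h\in V_h^k$, exactly as in Lemma~\ref{galortho}. Next, let $i_{\tt SZ}^k u$ be the Scott--Zhang interpolant, chosen so as to preserve the homogeneous boundary condition, so that $\eta:=u-i_{\tt SZ}^k u$ vanishes on $\partial\Omega$; the standard approximation properties give $\|\eta\|\lesssim h^{k+1}|u|_{H^{k+1}(\Omega)}$ and $\|\eta\|_{1,h}+\|h^{\frac12}\nabla\eta\cdot n\|_{L^2(\partial\Omega)}\lesssim h^k|u|_{H^{k+1}(\Omega)}$. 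By the triangle inequality only $\|i_{\tt SZ}^k u-u_h\|_{1,h}$ needs to be bounded, and since $\|v_h\|_{1,h,\beta}\geq\varepsilon^{\frac12}\|v_h\|_{1,h}$ it suffices to estimate $\|i_{\tt SZ}^k u-u_h\|_{1,h,\beta}$. Applying Proposition~\ref{diff_conv_infsup} to $i_{\tt SZ}^k u-u_h\in V_h^k$ and using the consistency relation,
\[
c_s\|i_{\tt SZ}^k u-u_h\|_{1,h,\beta}\leq\sup_{w_h\in V_h^k}\frac{A_h(i_{\tt SZ}^k u-u_h,w_h)}{\|w_h\|_{1,h,\beta}}=\sup_{w_h\in V_h^k}\frac{A_h(i_{\tt SZ}^k u-u,w_h)}{\|w_h\|_{1,h,\beta}},
\]
so everything comes down to the continuity bound $|A_h(\eta,w_h)|\lesssim\varepsilon^{\frac12}h^k|u|_{H^{k+1}(\Omega)}\,\|w_h\|_{1,h,\beta}$; dividing by $c_s\varepsilon^{\frac12}$ then closes the argument.

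To establish that bound, observe that because $\eta$ vanishes on $\partial\Omega$ the terms $\langle\beta\cdot n,\eta,w_h\rangle_{\partial\Omega^-}$ and $\varepsilon\langle\eta,\nabla w_h\cdot n\rangle_{\partial\Omega}$ drop out of $A_h(\eta,w_h)$, leaving (with $\sigma=0$, as in Proposition~\ref{diff_conv_infsup})
\[
A_h(\eta,w_h)=(\beta\cdot\nabla\eta,w_h)_\Omega+\varepsilon(\nabla\eta,\nabla w_h)_\Omega-\varepsilon\langle\nabla\eta\cdot n,w_h\rangle_{\partial\Omega}.
\]
The diffusive volume term is bounded by Cauchy--Schwarz by $\varepsilon^{\frac12}\|\nabla\eta\|\cdot\varepsilon^{\frac12}\|\nabla w_h\|\leq\varepsilon^{\frac12}\|\nabla\eta\|\,\|w_h\|_{1,h,\beta}$, and the diffusive boundary term, using the trace inequality of Lemma~\ref{trace_ineq}, by $\varepsilon^{\frac12}\|h^{\frac12}\nabla\eta\cdot n\|_{L^2(\partial\Omega)}\cdot\varepsilon^{\frac12}\|h^{-\frac12}w_h\|_{L^2(\partial\Omega)}\lesssim\varepsilon^{\frac12}\|h^{\frac12}\nabla\eta\cdot n\|_{L^2(\partial\Omega)}\,\|w_h\|_{1,h,\beta}$; the interpolation estimates recalled above put both terms into the required form.

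The convective volume term is the one that needs care, and it is here that the hypothesis $Pe<1$ enters. A plain Cauchy--Schwarz estimate $(\beta\cdot\nabla\eta,w_h)_\Omega\lesssim|\beta|\,\|\nabla\eta\|\,\|w_h\|\lesssim|\beta|\varepsilon^{-\frac12}\|\nabla\eta\|\,\|w_h\|_{1,h,\beta}$ (Poincar\'e together with $\|w_h\|\lesssim\|w_h\|_{1,h}\leq\varepsilon^{-\frac12}\|w_h\|_{1,h,\beta}$) only produces the factor $|\beta|\varepsilon^{-\frac12}\|\nabla\eta\|\lesssim\varepsilon^{\frac12}h^{k-1}|u|_{H^{k+1}(\Omega)}$, which is suboptimal by one power of $h$. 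The fix is to integrate by parts and use $\eta|_{\partial\Omega}\equiv 0$ to write $(\beta\cdot\nabla\eta,w_h)_\Omega=-(\eta,\beta\cdot\nabla w_h)_\Omega-(\eta,(\nabla\cdot\beta)w_h)_\Omega$. Then $|(\eta,\beta\cdot\nabla w_h)_\Omega|\lesssim|\beta|\,\|\eta\|\,\|\nabla w_h\|$, and writing $|\beta|=Pe\,\varepsilon/h$ and invoking $Pe<1$ together with $\|\eta\|\lesssim h^{k+1}|u|_{H^{k+1}(\Omega)}$ turns this into $\lesssim\varepsilon^{\frac12}h^{k}|u|_{H^{k+1}(\Omega)}\cdot\varepsilon^{\frac12}\|\nabla w_h\|\leq\varepsilon^{\frac12}h^{k}|u|_{H^{k+1}(\Omega)}\,\|w_h\|_{1,h,\beta}$; the lower order remainder $(\eta,(\nabla\cdot\beta)w_h)_\Omega$ is treated the same way, the Poincar\'e inequality and $Pe<1$ again trading the extra power of $h$ carried by $\|\eta\|$ against $\varepsilon$ (using $\|\nabla\cdot\beta\|_{L^\infty(\Omega)}$). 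Collecting all the contributions yields the continuity bound, hence $\|i_{\tt SZ}^k u-u_h\|_{1,h}\lesssim h^k|u|_{H^{k+1}(\Omega)}$, and the proposition follows.

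The main obstacle will be precisely this convective volume term: one must resist the temptation of a direct Cauchy--Schwarz bound, which is short by a power of $h$, and instead integrate by parts --- legitimate exactly because the interpolant has been chosen to kill the trace of $\eta$ on $\partial\Omega$ --- and then exploit the defining inequality $Pe<1$ of the diffusion dominated regime to reabsorb the unfavourable ratio $|\beta|h/\varepsilon$. Everything else is a faithful repetition of the proof of Proposition~\ref{apriori_diff}.
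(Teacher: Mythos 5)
Your proposal is correct and follows essentially the same route as the paper: inf-sup stability (Proposition~\ref{diff_conv_infsup}) plus Galerkin orthogonality reduces everything to a continuity bound for $A_h(u-i_{\tt SZ}^k u,w_h)$, and the key step --- integrating the convective term by parts so that $\|u-i_{\tt SZ}^k u\|$ rather than $\|\nabla(u-i_{\tt SZ}^k u)\|$ appears, then trading $|\beta|=Pe\,\varepsilon/h$ against the extra power of $h$ using $Pe<1$ --- is exactly the paper's argument. The only (harmless) deviation is that you invoke the boundary-value--preserving variant of the Scott--Zhang interpolant to annihilate the boundary terms, whereas the paper keeps a generic interpolant and bounds the surviving outflow term $\left<\beta\cdot n\,(u-i_{\tt SZ}^k u),w_h\right>_{\partial\Omega^+}$ by $Pe\,\|u-i_{\tt SZ}^k u\|_{\frac12,h,\partial\Omega}\|w_h\|_{1,h,\beta}$; you are in fact slightly more careful than the paper in retaining the $(\nabla\cdot\beta)$ contribution from the integration by parts.
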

\begin{proof}
As in the proof of Proposition \ref{apriori_diff} we arrive at the following
representation of the discrete error
\[
c_s \|u_h-i_{\tt SZ}^k u\|_{1,h,\beta}\leq \sup_{w_h \in V_h^k} \frac{A_h(u_h-i_{\tt SZ}^k u,w_h)}{\|w_h\|_{1,h,\beta}} = \sup_{w_h \in V_h^k} \frac{A_h(u-i_{\tt SZ}^k u,w_h)}{\|w_h\|_{1,h,\beta}}.
\]
By the continuity of Lemma \ref{continuity}  and an integration by
parts in the convective term we obtain
\begin{multline*}
A_h(u_h-i_{\tt SZ}^k u,w_h)\lesssim \varepsilon \|u - i_{\tt
    SZ}^k u\|_* \|w_h\|_{1,h}\\
+ (u - i_{\tt SZ}^k u,\beta \cdot \nabla w_h)_\Omega +
  \left<\beta \cdot n (u - i_{\tt SZ}^k u), w_h \right>_{\partial
    \Omega^+})\\
\lesssim \varepsilon ( \|u - i_{\tt
    SZ}^k u\|_* + Pe \|h^{-1} (u - i_{\tt SZ}^k u)\| + Pe \|u - i_{\tt SZ}^k
  u\|_{\frac12,h,\partial \Omega}) \|w_h\|_{1,h,\beta}.
\end{multline*}
As a consequence
\begin{multline*}
\varepsilon \|u_h-i_{\tt SZ}^k u\|_{1,h} \leq \|u_h-i_{\tt SZ}^k
u\|_{1,h,\beta}\\
\lesssim c_s^{-1} \varepsilon (\|u - i_{\tt
    SZ}^k u\|_* + Pe \|h^{-1} (u - i_{\tt SZ}^k u)\| + Pe \|u - i_{\tt SZ}^k
  u\|_{\frac12,h,\partial \Omega}).
\end{multline*}
The claim follows by dividing through by $\varepsilon$, using
approximation and the assumption $Pe<1$.
\end{proof}
\subsection{Convection dominated regime: the Streamline--diffusion mehod}
In the convection dominated regime, when $Pe>1$, we need to add some stabilization
in order to obtain a robust scheme. We will here first consider the simple case
of Streamline-diffusion (SD) stabilization and assuming $\sigma=0$. In the next section the results
will be extended to include the Continuous interior penalty (CIP) method. 

The formulation now takes the form:  find $u_h \in V^k_h$ such that
\begin{multline}\label{convdiffFEM_stab}
A_{SD}(u_h,v_h) := (\beta \cdot \nabla u_h, v_h+ \delta \beta \cdot
\nabla v_h)_\Omega - \sum_K (\varepsilon \Delta u_h, \delta \beta \cdot
\nabla v_h)_K -
\left< \beta \cdot n\,
  u_h,v_h\right>_{\partial \Omega^-}\\ + \varepsilon a_h(u_h,v_h) =
(f,v_h+ \delta \beta \cdot \nabla v_h)_\Omega,\quad \forall v_h \in V^k_h,
\end{multline}
where $\delta = \gamma_{SD} h/|\beta|$ when $Pe>1$ and $\delta = 0$
otherwise. At high P\'eclet numbers, the enhanced robustness of the stabilized method allows us
to work in the stronger norm $\tnorm{ u_h }_{h,\delta}$ defined by
\begin{equation}\label{tnorm}
\tnorm{u_h}^2_{h,\delta} :=  \|\delta^{\frac12} \beta \cdot \nabla u_h\|^2 +
\frac12 \||\beta \cdot n|^{\frac12} u_h\|_{L^2(\partial \Omega)}^2 +
\varepsilon \|\nabla u_h\|^2.
\end{equation}
We will also use the weaker form $\tnorm{u_h}^2_{h,0}$ defined by
\eqref{tnorm} with $\delta=0$
and for the
convergence analysis we introduce the norm
\[
\tnorm{ u_h }^2_* := \|\delta^{-\frac12} u_h\|^2 + \varepsilon
\|\nabla u_h\cdot n\|^2_{-\frac12,h,\partial \Omega}+\sum_K
\|\delta^{\frac12} \varepsilon \Delta u_h\|^2_{L^2(K)}+
\varepsilon
\|u_h\|^2_{\frac12,h,\partial \Omega} + \tnorm{u_h}^2_{h,\delta}.
\]
Testing the formulation \eqref{convdiffFEM_stab} with $v_h = u_h$
yields the positivity
\begin{equation}\label{pos_SD}
c\tnorm{ u_h }_{h,\delta}^2 \leq A_{SD}(u_h,u_h)
\end{equation}
in the standard way using an element wise inverse inequality to absorb
the second order term, i.e.
\[
\sum_K (\varepsilon \Delta u_h, \delta \beta \cdot
\nabla u_h)_K \leq \frac12 C^2_I \gamma_{SD} Pe^{-1/2} \| \varepsilon^{\frac12} \nabla
u_h\|^2+ \frac12 \|\delta^{\frac12} \beta \cdot \nabla u_h\|^2.
\]
Clearly for $\gamma_{SD}<1/(C^2_I)$ stability holds for $Pe>1$.

Unfortunately the norms proposed above seem too weak to allow for optimal error estimates.
Indeed, since we do not control all of $\|u_h\|_{1,h}$, for general $u \in H^2+V_h^1$, $v_h \in
V_h^1$ there does not hold $A_{SD}(u,v_h) \leq \tnorm{ u }_*\tnorm{
  v_h }_{h,\delta}$, (c.f. Lemma \ref{continuity}) unless an assumption on the boundary
velocity such as $|\beta \cdot n| h > \varepsilon$ is made.
%This continuity tells us that we need optimal approximation in the
%norm $ |\!\| \cdot |\!\|_*$ and stability in the norm $|\!\| \cdot
%|\!\|$. We readily verify that for $u \in H^2(\Omega)$ there holds
%\[
% |\!\| u - i_{\tt SZ}^1 u |\!\|_* \leq C(\varepsilon^{\frac12} +
% |\beta|^{\frac12} h^{\frac12}) h |u|_{H^2(\Omega)} \leq C  |\beta|^{\frac12} h^{\frac32} |u|_{H^2(\Omega)}.
%\]
It also appears to be difficult to obtain an inf-sup condition similar to
\eqref{infsup2} in the high P\'eclet regime.
 
We therefore use another technique to prove optimal convergence
directly. 
The idea is to construct an interpolation operator $\pi_{\partial} u$, such that the
{\em interpolation error} $u - \pi_{\partial} u$ satisfies the continuity estimate:
\begin{equation}\label{cont_SD}
A_{SD}(u - \pi_{\partial} u, v_h) \leq \tnorm{ u - \pi_{\partial} u
}_* \tnorm{ v_h }_{h,\delta}.
\end{equation}
Assume that we have an interpolation operator
$\pi_\partial:H^1(\Omega) \mapsto V_h^1$ such that the following
hypothesis are satisfied.
\begin{itemize}
\item[({\bf{H1}})] Approximation,
\begin{equation}\label{prop1}
\|\pi_\partial u - u\| + h \|\nabla (\pi_\partial u - u)\| \leq
  C h^{k+1} |u|_{H^{k+1}(\Omega)}.
\end{equation}
\item[({\bf{H2}})] Normal gradient,
\begin{equation}\label{prop2}
\int_{F_i}     \nabla (\pi_\partial u- u) \cdot n ~\mbox{d}s = 0, \quad
i=1\hdots N_P,
\end{equation}
where $F_i$ are the boundary segments introduced in Section \ref{stability}.
\end{itemize}
Under assumptions ({\bf{H1}}) and ({\bf{H2}}), we may prove the
optimal convergence of the SD-method.
\begin{proposition}
Let $u \in H^{k+1}(\Omega)$ be the solution of \eqref{conv_diff} and $u_h$ the solution of \eqref{convdiffFEM_stab}.
Assume that there exists $\pi_{\partial} u \in V_h^k$ satisfying
({\bf{H1}}) and ({\bf{H2}}). Then
\[
 \tnorm{ u - u_h}_{h,\delta} \lesssim h^{k+\frac12} (1 + Pe^{-\frac12}) |u|_{H^{k+1}(\Omega)}.
\]
\end{proposition}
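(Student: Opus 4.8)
The plan is to follow the Strang-type strategy already used in Proposition~\ref{apriori_diff}, but splitting the error through the special interpolant $\pi_\partial u$ rather than the Scott--Zhang interpolant, so that hypothesis ({\bf H2}) kills the troublesome non-symmetric boundary term. First I would write $u - u_h = (u - \pi_\partial u) + (\pi_\partial u - u_h) =: \rho + \theta_h$, with $\theta_h \in V_h^k$. The term $\tnorm{\rho}_{h,\delta}$ is controlled directly by the approximation hypothesis ({\bf H1}): the bulk part $\varepsilon\|\nabla\rho\|^2 + \|\delta^{1/2}\beta\cdot\nabla\rho\|^2$ gives $h^{2k}\varepsilon + h^{2k}\delta|\beta|^2 \lesssim h^{2k}\varepsilon + h^{2k+1}|\beta| $, and with $\delta = \gamma_{SD} h/|\beta|$ and $Pe>1$ this is $\lesssim h^{2k+1}|\beta|\,|u|_{H^{k+1}}^2$; the boundary part $\||\beta\cdot n|^{1/2}\rho\|_{\partial\Omega}^2$ is handled by a trace inequality and again ({\bf H1}), losing the expected half power of $h$. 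So the real work is bounding $\tnorm{\theta_h}_{h,\delta}$.

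For $\theta_h$ I would invoke the discrete coercivity \eqref{pos_SD}, so that $c\tnorm{\theta_h}_{h,\delta}^2 \le A_{SD}(\theta_h,\theta_h)$, then use Galerkin orthogonality (which holds here because the consistency of Lemma~\ref{galortho} extends to \eqref{convdiffFEM_stab} — the SD terms are consistent since $u$ solves the strong equation) to replace $A_{SD}(\theta_h,\theta_h)$ by $A_{SD}(-\rho,\theta_h) = -A_{SD}(u-\pi_\partial u,\theta_h)$. Now the key is the continuity estimate \eqref{cont_SD}: term by term, the convective volume terms $(\beta\cdot\nabla\rho, \theta_h + \delta\beta\cdot\nabla\theta_h)_\Omega$ are treated by Cauchy--Schwarz against $\|\delta^{-1/2}\rho\|$ and $\|\delta^{1/2}\beta\cdot\nabla\theta_h\|$ (after an integration by parts on the $\theta_h$ part to move a derivative off $\rho$ where needed, picking up a boundary term absorbed into $\||\beta\cdot n|^{1/2}\theta_h\|_{\partial\Omega}$); the strong-residual SD term $\sum_K(\varepsilon\Delta\rho, \delta\beta\cdot\nabla\theta_h)_K$ pairs $\|\delta^{1/2}\varepsilon\Delta\rho\|$ against $\|\delta^{1/2}\beta\cdot\nabla\theta_h\|$; the diffusive part $\varepsilon a_h(\rho,\theta_h)$ is exactly Lemma~\ref{continuity} scaled by $\varepsilon$, giving $\varepsilon\|\rho\|_*\|\theta_h\|_{1,h}$. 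This is where ({\bf H2}) is essential: the non-symmetric Nitsche term $\varepsilon\langle\rho, \nabla\theta_h\cdot n\rangle_{\partial\Omega}$ is the one that would otherwise force us to control all of $\|\theta_h\|_{1,h}$, which \eqref{pos_SD} does not provide. Writing $\theta_h = \bar\theta_h^j + (\theta_h - \bar\theta_h^j)$ on each $F_j$ and using that $\int_{F_j}\nabla\rho\cdot n\,ds = 0$, the mean-value contribution vanishes and the fluctuation $\theta_h - \bar\theta_h^j$ is bounded by $h\|\nabla\theta_h\times n\|_{L^2(F_j)}$ as in \eqref{L2approx}, which pairs against $\varepsilon^{1/2}\|\nabla\theta_h\|$ — a quantity controlled by $\tnorm{\theta_h}_{h,\delta}$.

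Once \eqref{cont_SD} holds, we get $c\tnorm{\theta_h}_{h,\delta}^2 \lesssim \tnorm{\rho}_*\tnorm{\theta_h}_{h,\delta}$, hence $\tnorm{\theta_h}_{h,\delta} \lesssim \tnorm{\rho}_*$, and the triangle inequality closes with $\tnorm{u-u_h}_{h,\delta} \le \tnorm{\rho}_{h,\delta} + \tnorm{\theta_h}_{h,\delta} \lesssim \tnorm{\rho}_*$. It then remains to estimate $\tnorm{\rho}_*$ using ({\bf H1}): the summands $\|\delta^{-1/2}\rho\|^2$, $\varepsilon\|\nabla\rho\cdot n\|_{-1/2,h,\partial\Omega}^2$, $\sum_K\|\delta^{1/2}\varepsilon\Delta\rho\|_{L^2(K)}^2$, $\varepsilon\|\rho\|_{1/2,h,\partial\Omega}^2$, and $\tnorm{\rho}_{h,\delta}^2$ each scale, after inserting $\delta \sim h/|\beta|$ and $Pe = |\beta|h/\varepsilon > 1$, like $h^{2k+1}|\beta|\,|u|_{H^{k+1}}^2$ up to the $\varepsilon$-dominated contributions which scale like $h^{2k}\varepsilon\,|u|_{H^{k+1}}^2 = h^{2k+1}|\beta|Pe^{-1}|u|_{H^{k+1}}^2$; collecting, $\tnorm{\rho}_* \lesssim h^{k+1/2}|\beta|^{1/2}(1 + Pe^{-1/2})|u|_{H^{k+1}}$, which after normalising by the natural scale gives the stated bound $h^{k+1/2}(1 + Pe^{-1/2})|u|_{H^{k+1}}$. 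The main obstacle I anticipate is verifying \eqref{cont_SD} carefully — in particular the bookkeeping of the integration-by-parts boundary terms in the convective part and making sure every residual quantity on $\rho$ that appears has a matching norm in $\tnorm{\cdot}_*$ while every $\theta_h$ quantity is genuinely controlled by $\tnorm{\cdot}_{h,\delta}$; the existence of $\pi_\partial$ satisfying ({\bf H1})--({\bf H2}) is assumed in the statement, but one should note it can be built by correcting the Scott--Zhang interpolant with a combination of the patch functions $\varphi_j$ of Lemma~\ref{spec_func}.
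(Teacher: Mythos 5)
Your proposal follows the paper's proof essentially verbatim: split the error through $\pi_\partial u$, use the coercivity \eqref{pos_SD} and the consistency of the SD formulation for the discrete part, establish the continuity \eqref{cont_SD} term by term with ({\bf{H2}}) neutralising the boundary term that $\tnorm{\cdot}_{h,\delta}$ cannot absorb, and conclude with the approximation hypothesis ({\bf{H1}}). One small correction to your bookkeeping: the term that actually requires ({\bf{H2}}) is $\varepsilon\left<\nabla(u-\pi_\partial u)\cdot n, v_h\right>_{\partial\Omega}$, where $v_h$ appears on the boundary without a $|\beta\cdot n|^{1/2}$ weight, and not $\varepsilon\left<u-\pi_\partial u, \nabla v_h\cdot n\right>_{\partial\Omega}$, which is already bounded by $\varepsilon^{1/2}\|u-\pi_\partial u\|_{\frac12,h,\partial\Omega}\,\varepsilon^{1/2}\|\nabla v_h\|$ via the trace inequality; your mean/fluctuation argument with $\int_{F_j}\nabla(u-\pi_\partial u)\cdot n\,\mbox{d}s=0$ is precisely the paper's treatment of the former term.
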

\begin{proof}
It follows from the approximation properties of $\pi_\partial$ that 
\[
 \tnorm{ u - \pi_{\partial} u}_* \lesssim  \|\beta\|_{\infty}^{\frac12}
 h^{k+\frac12} (1+Pe^{-1/2}) |u|_{H^{k+1}(\Omega)}.
\]
We now need to prove the continuity \eqref{cont_SD}. Note that
\begin{multline*}
A_{SD}(u - \pi_{\partial} u, v_h) = (\delta^{\frac12}\beta \cdot \nabla (u -
\pi_{\partial} u)+\delta^{-\frac12} (u -\pi_{\partial}
u),
\delta^{\frac12} \beta \cdot \nabla v_h)_{K} \\[3mm]
- \sum_{K}  (\delta^{\frac12} \varepsilon \Delta (u - \pi_\partial u), \delta^{\frac12} \beta \cdot \nabla v_h)_{K}
+ \left< \beta \cdot n\,(u - \pi_{\partial} u),v_h\right>_{\partial
  \Omega^+} + \varepsilon a_h(u - \pi_{\partial} u,v_h)\\[3mm]
\lesssim \tnorm{ u - \pi_{\partial} u}_* \tnorm{ v_h }_{h,\delta} + \underbrace{\varepsilon a_h(u - \pi_{\partial} u,v_h)}_{I_1}.
\end{multline*}
Consider now the term $I_1$. We will prove the continuity
\begin{equation}\label{epsa}
\varepsilon a_h(u - \pi_{\partial} u,v_h) \leq \varepsilon^{\frac12} \|u -
\pi_{\partial} u\|_* \varepsilon^{\frac12}  \|v_h\|_{1,h} \leq \tnorm{u -
\pi_{\partial} u}_*\tnorm{v_h}_{h,\delta}
\end{equation}
Using Cauchy-Schwarz inequality and a
trace inequality we show the continuity of the first and last term.
\begin{multline*}
I_1 = \varepsilon (\nabla (u - \pi_{\partial} u),\nabla v_h)_\Omega -\varepsilon \left<\nabla (u -
  \pi_{\partial} u) \cdot n, v_h \right>_{\partial
  \Omega}+\varepsilon \left<\nabla v_h\cdot n, (u -
  \pi_{\partial} u)\right>_{\partial
  \Omega} \\[3mm]
\leq \varepsilon^{\frac12}  \| u - \pi_{\partial} u\|_* \tnorm{ v_h }_{h,0} -
\varepsilon \left<\nabla (u -
  \pi_{\partial} u) \cdot n, v_h \right>_{\partial
  \Omega}.
\end{multline*}
For the remaining term we must exploit the orthogonality property
\eqref{prop2} of
$\pi_\partial u$ on the boundary. Indeed by decomposing the boundary
integral on the $N_P$ subdomains $F_i$ we have, denoting by $\bar v^i_h$
the average of $v_h$ over the boundary segment $F_i$.
\begin{multline*}
\varepsilon\left<\nabla (u -
  \pi_{\partial} u) \cdot n, v_h \right>_{\partial
  \Omega} = \varepsilon \sum_{i=1}^{N_P} \left<\nabla (u -
  \pi_{\partial} u) \cdot n, v_h - \bar v^i_h\right>_{F_i} \\
\leq \varepsilon \sum_{i=1}^{N_P} \|\nabla (u -
  \pi_{\partial} u) \cdot n\|_{L^2(F_i)} \| v_h - \bar v^i_h\|_{L^2(
    F_i)} \\
\lesssim \varepsilon^{\frac12} \|\nabla (u - \pi_{\partial} u) \cdot n\|_{-\frac12,h,\partial
  \Omega} \varepsilon^{\frac12} \|\nabla v_h\|\\
\lesssim \varepsilon^{\frac12}\| u - \pi_{\partial} u \|_* \varepsilon^{\frac12} \|v_h\|_{1,h} .
\end{multline*}
Where we used the approximation properties of
the local average and a trace inequality. Collecting the above
estimates and noting that $$\varepsilon^{\frac12}\| u - \pi_{\partial}
u \|_* \leq \tnorm{u - \pi_{\partial}
u}_*,\quad \varepsilon^{\frac12} \|v_h\|_{1,h} \leq \tnorm{v_h}_{h,0}$$ concludes the proof of \eqref{cont_SD}.

 Using the positivity
\eqref{pos_SD}, and the consistency of the method we have,
setting
$e_h:= u_h - \pi_\partial u$, and using that $Pe > 1$
\begin{multline*}
\tnorm{ e_h }_{h,\delta}^2 = A_{SD}(e_h,e_h) = A_{SD}(u - \pi_\partial u,
e_h)
\lesssim \tnorm{ u - \pi_{\partial} u }_* \tnorm{ e_h }_{h,\delta} \\
\lesssim h^{k+\frac12} \|\beta\|_{\infty}^\frac12 (1 + Pe^{-\frac12}) |u|_{H^{k+1}(\Omega)} \tnorm{ e_h }_{h,\delta}.
\end{multline*}
\end{proof}\\
We end this section by the following Lemma
establishing the existence of the interpolation $\pi_\partial$ with
the required properties.
\begin{lemma}\label{proj_exist_1}
The interpolation operator $\pi_\partial:H^1(\Omega) \mapsto V_h^1$
satisfying the properties ({\bf H1}) and ({\bf H2}) exists.
\end{lemma}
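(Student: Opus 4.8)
The plan is to obtain $\pi_\partial$ by correcting the Scott--Zhang interpolant with the boundary function supplied by Lemma \ref{spec_func}. Write $w := i_{\tt SZ}^k u$, which already satisfies \eqref{prop1} by \cite{SZ90}, and set
\[
r_j := \mbox{meas}(F_j)^{-1} \int_{F_j} \nabla (u - w)\cdot n ~\mbox{d}s, \qquad j = 1,\dots,N_P .
\]
Let $\varphi_r \in V_h^1$ be the function associated with the vector $(r_j)_{j=1}^{N_P}$ by Lemma \ref{spec_func} and define $\pi_\partial u := w + \varphi_r$. Then ({\bf H2}) is immediate: by the constraint \eqref{constraint_vp} one has $\int_{F_j}\nabla\varphi_r\cdot n~\mbox{d}s = \mbox{meas}(F_j)\,r_j = \int_{F_j}\nabla(u - w)\cdot n~\mbox{d}s$, so $\int_{F_j}\nabla(\pi_\partial u - u)\cdot n~\mbox{d}s = 0$ for every $j$. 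It then remains only to check that the corrector $\varphi_r$ is of optimal order, so that ({\bf H1}) is not destroyed.

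To this end I would first bound the coefficients. By Cauchy--Schwarz and \eqref{Pcond},
\[
|r_j| \lesssim \mbox{meas}(F_j)^{-1/2}\,\|\nabla(u-w)\cdot n\|_{L^2(F_j)} \lesssim h^{-1/2}\,\|\nabla(u-w)\|_{L^2(F_j)},
\]
whence $\|h^{1/2} r_j\|_{L^2(F_j)}^2 = h\,\mbox{meas}(F_j)\, r_j^2 \lesssim h\,\|\nabla(u-w)\|_{L^2(F_j)}^2$. Summing over $j$, then estimating $\|\nabla(u-w)\|_{L^2(\partial\Omega)}$ by a multiplicative trace inequality applied elementwise on the boundary elements together with the Scott--Zhang estimates for $\|\nabla(u-w)\|_{L^2(K)}$ and $\|\nabla^2(u-w)\|_{L^2(K)}$, gives $\sum_{j}\|h^{1/2}r_j\|_{L^2(F_j)}^2 \lesssim h^{2k}|u|_{H^{k+1}(\Omega)}^2$. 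Inserting this into \eqref{stability_vp} yields $\|\varphi_r\|_{1,h} \lesssim h^{k}|u|_{H^{k+1}(\Omega)}$, and in particular $\|\nabla\varphi_r\| \lesssim h^{k}|u|_{H^{k+1}(\Omega)}$, which already controls the gradient term in ({\bf H1}).

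It remains to bound $\|\varphi_r\|$, and here one must exploit the localization of $\varphi_r$ rather than the crude estimate $\|\varphi_r\|\lesssim C_P\|\varphi_r\|_{1,h}$, which would cost a full power of $h$. By construction $\varphi_r$ is supported in $\bigcup_j P_j$, a strip along $\partial\Omega$ of width $\mathcal{O}(h)$, and vanishes at every node lying in $\Omega\setminus\bigcup_j\overset{\circ}{P_j}$, hence on the interior side of that strip; a patchwise Poincar\'e inequality with constant $\mathcal{O}(h)$ therefore gives $\|\varphi_r\| \lesssim h\,\|\nabla\varphi_r\| \lesssim h^{k+1}|u|_{H^{k+1}(\Omega)}$. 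Combining the two bounds on $\varphi_r$ with the Scott--Zhang estimate for $w$ and the triangle inequality establishes ({\bf H1}), and the lemma follows. The only delicate point is this last step: the norm in \eqref{stability_vp} carries no bulk $L^2(\Omega)$ contribution, so recovering the optimal power of $h$ in $L^2$ genuinely requires using that $\varphi_r$ is concentrated in an $\mathcal{O}(h)$-neighbourhood of the boundary.
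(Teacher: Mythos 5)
Your proposal is correct and follows essentially the same route as the paper: the same corrector $\pi_\partial u = i_{\tt SZ}^k u + \varphi_r$ with $r_j$ the patch averages of $\nabla(u - i_{\tt SZ}^k u)\cdot n$, the same elementwise trace plus Scott--Zhang estimates to bound the $r_j$, and the same local Poincar\'e argument on the $\mathcal{O}(h)$ boundary patches to recover the extra power of $h$ in $L^2$. The only cosmetic difference is that you read the gradient bound directly off \eqref{stability_vp} whereas the paper recovers it from the $L^2$ bound via an inverse inequality; these are equivalent.
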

\begin{proof}
Let $\pi_\partial u := i_{\tt SZ}^k u + \varphi_r$ where $\varphi_r$ is
the function of Lemma \ref{spec_func} with the $r_j$ chosen such that
\[
r_j = \overline{\nabla u \cdot n}^j -  \overline{\nabla  i_{\tt SZ}^k u
  \cdot n}^j.
\]
Clearly by construction there holds
\begin{multline*}
\int_{F_i} (\nabla \pi_\partial u \cdot n - \nabla u \cdot n ) ~
\mbox{d}s
= \int_{F_i} (\nabla  i_{\tt SZ}^k u\cdot n + \nabla \varphi_r\cdot n - \nabla u \cdot n ) ~
\mbox{d}s\\
= \int_{F_i} (\nabla  i_{\tt SZ}^k u\cdot n + r_i - \nabla u \cdot n ) ~
\mbox{d}s = 0.
\end{multline*}
To prove the approximation results we decompose the error
\[
\|u - \pi_\partial u\| \leq \|u - i_{\tt SZ}^k u\| + \| i_{\tt SZ}^k u
- \pi_\partial u\| \leq C h^{k+1} |u|_{H^{k+1}(\Omega)} + \|\varphi_r\|.
\]
Using local Poincar\'e inequalities and the stability \eqref{stability_vp} of $\varphi_r$ we get 
\begin{multline*}
\|\varphi_r\| \lesssim \|h \nabla \varphi_r\| \lesssim h^{\frac32}
\left(\sum_{i=1}^{N_P} \|r_i\|^2_{L^2(F_i)} \right)^{\frac12} \\
=   h^{\frac32}
\left(\sum_{i=1}^{N_P} \|\overline{\nabla u \cdot n}^i -  \overline{\nabla  i_{\tt SZ}^k u
  \cdot n}^i \|^2_{L^2(F_i)} \right)^{\frac12}.
\end{multline*}
Using the stability of the projection onto piece wise constants, element wise
trace inequalities and finally approximation, we conclude
\begin{multline*}
 \|\overline{\nabla u \cdot n}^i -  \overline{\nabla  i_{\tt SZ}^k u
  \cdot n}^i \|^2_{L^2(F_i)} \leq \|\nabla u \cdot n -  \nabla  i_{\tt SZ}^k u
  \cdot n\|^2_{L^2(F_i)} \\
\leq  2 C_T^2 (h^{-1} \|\nabla(u - i_{\tt
    SZ}^k u)\|^2_{L^2(P_i)} + h \sum_{K \in P_i}\|D^2 (u - i_{\tt
    SZ}^k u)\|^2_{L^2(K)} )
\lesssim h^{2k-1} |u|^2_{H^{k+1}(P_i)}
\end{multline*}
where $D^2u$  is the standard multi-index notation for all the second derivatives of $u$.
We conclude that
\[
\|\varphi_r\| \lesssim h^{\frac32}
(\sum_{i=1}^{N_P} \|\nabla u \cdot n -  \nabla  i_{\tt SZ}^k u
  \cdot n \|^2_{L^2(F_i)} )^{\frac12} \lesssim h^{k+1} |u|_{H^{k+1}(\Omega)}
\]
The estimate on the gradient is immediate by
\begin{multline*}
\|\nabla (u - \pi_\partial u)\| \leq \|\nabla (u - i_{\tt SZ}^k u)\|+
\|\nabla ( i_{\tt SZ}^k u -  \pi_\partial u)\| \\
\leq \|\nabla (u -
i_{\tt SZ}^k u)\| + C_I h^{-1} \| i_{\tt SZ}^k u -  \pi_\partial u\|
\lesssim h^k |u|_{H^{k+1}(\Omega)}.
\end{multline*}
\end{proof}
%It remains to control the boundary term of $\|u_h\|_{1,h}$ as in
%Theorem \ref{infsup}. The following Lemma will be useful for this.
%\begin{lemma}
%Let $\varphi_r$ be the function of Lemma \ref{spec_func} with $r$ chosen as in
%\eqref{rchoice}.
%Then for $Pe>1$ there holds for all $\mu>0$ 
%\[
 %(\beta \cdot \nabla u_h, \eta \varphi_r+ \delta \eta  \beta \cdot \nabla \varphi_r) - \left< \beta \cdot n,
 % u_h,\eta \varphi_r\right>_{\partial \Omega^-} \leq 
%\]
%\end{lemma}
%\begin{proof}
%\end{proof}
%
\subsubsection{Convection dominated regime: the Continuous Interior Penalty mehod}
In this section we will sketch how the above results extend to
symmetric stabilization methods assuming that $c_\sigma >0$. To reduce 
technicalities we also assume that $\beta \in \mathbb{R}^2$.
We give a full proof only in the case of piecewise affine
finite elements. 
Recall that the
CIP method is obtained by adding a penalty term on the jump of the
gradient over element faces to the finite element formulation
\eqref{convdiffFEM}. The formulation then writes: find $u_h \in V^k_h$ such that 
\begin{equation}\label{convdiffCIP}
A_h(u_h,v_h) + J_h(u_h,v_h) = (f,v_h)_\Omega, \quad \forall v_h \in V_h^k,
\end{equation}
where
\[
J_h(u_h,v_h):= \gamma_{CIP}\sum_{K \in \mathcal{T}_h} \sum_{F \in \partial K
  \setminus \partial \Omega} \int_F h_F^2 |\beta \cdot n_F| [\nabla
u_h \cdot n_F][\nabla
v_h \cdot n_F] ~\mbox{d}s,
\]
with $[x]$ denoting the jump of the quantity $x$ over the face $F$ and
$n_F$ the normal to $F$, the orientation is arbitrary but fixed in
both cases.

The analysis once again
 depends on the construction of a special interpolant $\pi_{CIP} u 
\in V_h^k$. This time $\pi_{CIP} u$ must satisfy both the optimal approximation error
estimates of \eqref{prop1}, the property \eqref{prop2} on the normal
gradient, and the additional design condition:
\begin{align}\label{projection}
(u - \pi_{CIP} u, \beta \cdot \nabla v_h) &\leq \|h^{-\frac12} |\beta|^{\frac12}
(u - \pi_{CIP} u)\|  \gamma_{CIP}^{-
\frac12} J_h(v_h,v_h)^{\frac12},\, \forall v_h \in V_h^{k}.
\end{align}
Once such an interpolant has been proven to exist, the technique of
\cite{Bu05}, combined
with the analysis above, may be
used to prove quasi-optimal $L^2$-convergence for $c_\sigma>0$.
Using a similarly designed interpolation operator, an inf-sup
condition can be used to prove stability and error estimates in the
norm $\tnorm{\cdot}_{h,\delta}$ following \cite{BH04, BE06}.
Here we will first the error estimate in the $L^2$-norm, assuming the
existence of $\pi_{CIP} u$ and then show how to construct the interpolant in the
special case $k=1$.
\begin{proposition}
Assume that $\pi_{CIP} u \in V_h^k$ satisfying, \eqref{prop1},
\eqref{prop2} and \eqref{projection} 
exists. Let $u\in H^{k+1}(\Omega)$ be the solution to
\eqref{conv_diff}, with $c_\sigma>0$, and $u_h$ be the solution to
\eqref{convdiffCIP}. Then
$$
\|u - u_h\| \lesssim (c_\sigma)^{-1} (\sigma^{\frac12} h^\frac12 + |\beta|^{\frac12}(1 + Pe^{-\frac12})) h^{k+\frac12} |u|_{H^{k+1}(\Omega)}.
$$
\end{proposition}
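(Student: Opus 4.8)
The plan is to follow the pattern of the Streamline--diffusion analysis (and the technique of \cite{Bu05}), replacing $\pi_\partial$ by $\pi_{CIP}$ and using the extra design property \eqref{projection} to dispose of the convective term. Set $e_h := u_h - \pi_{CIP} u$. Since, by ({\bf H1}), $\|u - \pi_{CIP} u\| \lesssim h^{k+1}|u|_{H^{k+1}(\Omega)}$ is of higher order than the claimed bound, by the triangle inequality it suffices to estimate $\|e_h\|$. Writing
\[
\mathcal{N}(v)^2 := c_\sigma \|v\|^2 + \tnorm{v}^2_{h,0} + J_h(v,v),
\]
testing \eqref{convdiffCIP} with $e_h$ and integrating by parts in the convective term (recall $\beta \in \IR^2$, so $\nabla \cdot \beta = 0$, and $a_h(e_h,e_h) = \|\nabla e_h\|^2$) gives $\mathcal{N}(e_h)^2 \leq A_h(e_h,e_h) + J_h(e_h,e_h)$. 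Since $u \in H^2(\Omega)$ has a single-valued normal gradient across interior faces, $J_h(u,v_h) = 0$, and $A_h(\cdot,\cdot)$ is consistent with the homogeneous data; hence Galerkin orthogonality gives $\mathcal{N}(e_h)^2 \leq A_h(u - \pi_{CIP}u, e_h) + J_h(u - \pi_{CIP}u, e_h)$.

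The core of the argument is to bound this right-hand side by $C\,\mathcal{E}\,\mathcal{N}(e_h)$ for a suitable interpolation-error quantity $\mathcal{E}$. Split it into reaction, convective, diffusive and penalty parts. The reaction part $(\sigma(u - \pi_{CIP}u),e_h)_\Omega$ is treated by Cauchy--Schwarz together with the $c_\sigma\|e_h\|^2$ of $\mathcal{N}(e_h)^2$ and, using $\|u - \pi_{CIP}u\| \lesssim h^{k+1}|u|_{H^{k+1}(\Omega)}$, ultimately contributes (after the final division by $c_\sigma^{\frac12}$) a term of order $(c_\sigma)^{-1}\sigma^{\frac12}h^{\frac12}\cdot h^{k+\frac12}|u|_{H^{k+1}(\Omega)}$. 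For the convective part I would integrate by parts to move the derivative onto $e_h$, producing $-(u - \pi_{CIP}u, \beta \cdot \nabla e_h)_\Omega + \left<\beta \cdot n\,(u - \pi_{CIP}u), e_h\right>_{\partial \Omega^+}$: the boundary term is bounded by $\||\beta \cdot n|^{\frac12}(u - \pi_{CIP}u)\|_{L^2(\partial \Omega)}\,\mathcal{N}(e_h)$, whereas the volume term is exactly where \eqref{projection} (with $v_h = e_h$) is used, giving $\leq \|h^{-\frac12}|\beta|^{\frac12}(u - \pi_{CIP}u)\|\, \gamma_{CIP}^{-\frac12} J_h(e_h,e_h)^{\frac12} \leq \|h^{-\frac12}|\beta|^{\frac12}(u - \pi_{CIP}u)\|\, \gamma_{CIP}^{-\frac12}\,\mathcal{N}(e_h)$. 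The diffusive part $\varepsilon a_h(u - \pi_{CIP}u, e_h)$ is handled exactly as the term $I_1$ in the SD proof (see \eqref{epsa}): Cauchy--Schwarz with the trace and inverse inequalities dispatches the symmetric term and the $\left<u - \pi_{CIP}u, \nabla e_h \cdot n\right>_{\partial \Omega}$ term, and the remaining $\varepsilon\left<\nabla(u - \pi_{CIP}u) \cdot n, e_h\right>_{\partial \Omega}$ is rewritten, via ({\bf H2}), as $\varepsilon \sum_i \left<\nabla(u - \pi_{CIP}u) \cdot n, e_h - \bar e^i_h\right>_{F_i}$ and bounded, using \eqref{L2approx}, by $\varepsilon^{\frac12}\|\nabla(u - \pi_{CIP}u) \cdot n\|_{-\frac12,h,\partial \Omega}\,\varepsilon^{\frac12}\|\nabla e_h\|$; all these pieces need only the $\varepsilon^{\frac12}\|\nabla e_h\|$ that $\mathcal{N}(e_h)$ controls. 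Finally $J_h(u - \pi_{CIP}u, e_h) \leq J_h(u - \pi_{CIP}u, u - \pi_{CIP}u)^{\frac12}\,\mathcal{N}(e_h)$ by Cauchy--Schwarz.

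It then remains to estimate $\mathcal{E}$ using ({\bf H1}), the approximation properties of $\pi_{CIP}$ (whose explicit construction is the content of the lemma that follows) and standard trace, inverse and approximation inequalities: $\|h^{-\frac12}|\beta|^{\frac12}(u - \pi_{CIP}u)\|$, $\||\beta \cdot n|^{\frac12}(u - \pi_{CIP}u)\|_{L^2(\partial \Omega)}$ and $\gamma_{CIP}^{-\frac12}J_h(u - \pi_{CIP}u, u - \pi_{CIP}u)^{\frac12}$ are each $\lesssim |\beta|^{\frac12}h^{k+\frac12}|u|_{H^{k+1}(\Omega)}$, while $\varepsilon^{\frac12}\|\nabla(u - \pi_{CIP}u)\|$ and $\varepsilon^{\frac12}\|\nabla(u - \pi_{CIP}u)\cdot n\|_{-\frac12,h,\partial\Omega}$ are $\lesssim \varepsilon^{\frac12}h^{k}|u|_{H^{k+1}(\Omega)} = |\beta|^{\frac12}Pe^{-\frac12}h^{k+\frac12}|u|_{H^{k+1}(\Omega)}$ (by $Pe = |\beta|h/\varepsilon$). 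Collecting these, using $\varepsilon \leq |\beta| h$ since $Pe > 1$, then dividing the resulting bound on $\mathcal{N}(e_h)$ by $c_\sigma^{\frac12}$ to pass to $\|e_h\| \leq c_\sigma^{-\frac12}\mathcal{N}(e_h)$, the triangle inequality $\|u - u_h\| \leq \|u - \pi_{CIP}u\| + \|e_h\|$ together with $c_\sigma \leq \sigma$ yields the stated estimate.

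The main obstacle is the volume convective term $(u - \pi_{CIP}u, \beta \cdot \nabla e_h)_\Omega$: in contrast with the SD norm $\tnorm{\cdot}_{h,\delta}$, the CIP energy norm $\mathcal{N}(\cdot)$ provides no control of the streamline derivative of $e_h$, so this term cannot be absorbed and must instead be routed entirely through $J_h(e_h,e_h)$ by means of the tailored orthogonality \eqref{projection}. Consequently the real work lies in the lemma below, namely exhibiting a single interpolant $\pi_{CIP}$ satisfying \eqref{prop1}, \eqref{prop2} and \eqref{projection} simultaneously --- genuinely delicate for $k > 1$, which is why a full construction is given only for $k = 1$ (combining the $\varphi_r$ of Lemma \ref{spec_func}, which enforces \eqref{prop2}, with a further patchwise correction arranging \eqref{projection}). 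A secondary, purely bookkeeping, point is to keep careful track of $\varepsilon$, $|\beta|$, $\sigma$ and $c_\sigma$ together with the regime assumption $Pe > 1$ so as to land exactly on the factor $(c_\sigma)^{-1}(\sigma^{\frac12}h^{\frac12} + |\beta|^{\frac12}(1 + Pe^{-\frac12}))$.
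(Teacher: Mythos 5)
Your proof is correct and follows essentially the same route as the paper: the error equation via positivity of $A_h+J_h$ and consistency, the split into reaction, convective, diffusive and jump contributions, the use of \eqref{projection} to route the convective volume term through $J_h(e_h,e_h)$, the use of ({\bf H2}) via the continuity \eqref{epsa} for the $\varepsilon a_h$ term, and the final approximation estimates. Your bookkeeping of the $\varepsilon$, $|\beta|$, $Pe$ and $c_\sigma$ dependencies is in fact somewhat more explicit than the paper's, which compresses these steps into a single display.
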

\begin{proof}
Let $e_h:= u_h - \pi_{CIP} u$.
There holds with $c_\sigma>0$,
\[
c_\sigma \|e_h\|^2 +  \tnorm{e_h}^2_{h,0}+ J_h(e_h,e_h) \leq A_h(e_h,e_h)+J_h(e_h,e_h).
\]
By the consistency of the method we have
\[
c_\sigma \|e_h\|^2 + \tnorm{e_h}^2_{h,0} + J_h(e_h,e_h) \leq A_h(u-\pi_{CIP} u,e_h)-J_h(\pi_{CIP} u,e_h).
\]
Finally by the continuity \eqref{epsa}, that holds thanks to property \eqref{prop2}, we have
\begin{multline}
 A_h(u-\pi_{CIP} u,e_h)-J_h(\pi_{CIP} u,e_h) \\
=(\sigma (u -\pi_{CIP} u), e_h)+(u -\pi_{CIP} u, \beta \cdot
\nabla e_h)-\int_{\partial \Omega} \beta \cdot n (u - \pi_{CIP}
u) e_h ~\mbox{d}s \\
+ \varepsilon a_h(u - \pi_{CIP}
u,e_h) + J_h(\pi_{CIP} u, e_h) \\
\leq ((\sigma^{\frac12} h^{\frac12}+C |\beta|^{\frac12} \gamma_{CIP}^{-
\frac12}) \|h^{-\frac12} (u - \pi_{CIP} u)\|+\|u - \pi_{CIP} u
\|_{1,h,\beta} \\+\varepsilon^{\frac12} \|u - \pi_{CIP} u
\|_*+J_h(\pi_{CIP} u,\pi_{CIP} u)^{\frac12}) \\
\times ( \sigma^{\frac12} \|e_h\|^2 +  \|e_h\|^2_{1,h,\beta} + J_h(e_h,e_h) )^{\frac12}
\end{multline}
and we end the proof by applying approximation estimates.
\end{proof}\\
We will now prove the existence of the interpolant $\pi_{CIP} u$ in
the case of piecewise affine continuous finite element approximation. 
\begin{lemma}\label{proj_exist_2}
The
function $\pi_{CIP} u \in V_h^1$, satisfying \eqref{prop1},
\eqref{prop2} and \eqref{projection}, is well defined and satisfies the
approximation
estimate
\[
\|u - \pi_{CIP} u\|+ h \|\nabla (u - \pi_{CIP} u)\| \lesssim h^{2} |u|_{H^{2}(\Omega)}.
\]
\end{lemma}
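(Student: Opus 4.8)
The plan is to construct $\pi_{CIP}u$ as a small, localized correction of the Scott--Zhang interpolant $i_{\tt SZ}^1 u$ that both reproduces certain \emph{macro-patch averages} of $u$ (the mechanism behind the design condition \eqref{projection}) and reproduces the boundary normal-gradient means \eqref{prop2}. Fix a partition of $\Omega$ into macro-patches $\{\omega_i\}$, each of diameter $\simeq h$ and consisting of $O(1)$ elements. I would take $\pi_{CIP}u := i_{\tt SZ}^1 u + \chi$ with $\chi \in V_h^1$ determined, in the spirit of the function $\varphi_r$ of Lemma~\ref{spec_func}, by the conditions $\int_{\omega_i}(u-\pi_{CIP}u)\,\mathrm{d}x = 0$ for all $i$ and $\int_{F_j}\nabla(u-\pi_{CIP}u)\cdot n\,\mathrm{d}s = 0$ for all $j$, the latter being exactly \eqref{prop2}. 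The geometric hypotheses of Section~\ref{stability} (sufficiently many element faces per $\Gamma_i$ and inner nodes per $F_j$) are what make this finite, coupled linear system uniformly solvable, so that $\pi_{CIP}u$ is well defined with a correction of controlled size, $\|\chi\| + h\|\nabla\chi\| \lesssim h^2|u|_{H^2(\Omega)}$.

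The approximation bound, which for $k=1$ is the displayed estimate and also \eqref{prop1}, then follows as at the end of the proof of Lemma~\ref{proj_exist_1}: from $\|u - i_{\tt SZ}^1 u\| + h\|\nabla(u-i_{\tt SZ}^1 u)\| \lesssim h^2|u|_{H^2(\Omega)}$, the size of $\chi$ above, and the inverse inequality of Lemma~\ref{inverse_ineq}; the estimate on $\chi$ in turn uses the stability \eqref{stability_vp}, local Poincar\'e and element-wise trace inequalities (Lemma~\ref{trace_ineq}), and the fact that the defect of the normal-gradient means on $F_j$ is $O(h^{1/2}|u|_{H^2(P_j)})$, exactly as for $\varphi_r$ there. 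Property \eqref{prop2} holds by construction.

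For \eqref{projection}, write $q_h := \beta\cdot\nabla v_h$; since $\beta\in\mathbb{R}^2$ and $v_h \in V_h^1$ this is piecewise constant, and since $v_h$ is continuous and affine on each element one has $[q_h]_F = (\beta\cdot n_F)[\nabla v_h\cdot n_F]_F$ on each interior face $F$. Because $\pi_{CIP}u$ reproduces the macro-patch averages of $u$, $(u-\pi_{CIP}u, q_h)_\Omega = (u-\pi_{CIP}u, q_h - \tilde q_h)_\Omega = \sum_i (u-\pi_{CIP}u,\, q_h - \bar q_h^{\,i})_{\omega_i}$, where $\tilde q_h$ is the macro-patchwise constant with values $\bar q_h^{\,i}$ the means of $q_h$ over $\omega_i$; on each $\omega_i$ (which has $O(1)$ elements) the fluctuation obeys $\|q_h - \bar q_h^{\,i}\|_{L^2(\omega_i)}^2 \lesssim h\sum_{F\subset\omega_i}\|[q_h]\|_{L^2(F)}^2$. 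Cauchy--Schwarz over $i$, the identity $\|[q_h]\|_{L^2(F)}^2 = |\beta\cdot n_F|^2\|[\nabla v_h\cdot n_F]\|_{L^2(F)}^2 \le |\beta|\,|\beta\cdot n_F|\,\|[\nabla v_h\cdot n_F]\|_{L^2(F)}^2$, and $h_F \simeq h$ then give $(u-\pi_{CIP}u, \beta\cdot\nabla v_h)_\Omega \lesssim \|h^{-1/2}|\beta|^{1/2}(u-\pi_{CIP}u)\|\,\gamma_{CIP}^{-1/2}J_h(v_h,v_h)^{1/2}$, which is \eqref{projection}.

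The step I expect to be the main obstacle is the coexistence of the two families of conditions: exact reproduction of the macro-patch averages is essential (so that only the fluctuation $q_h - \bar q_h^{\,i}$, and not all of $q_h$, survives in the estimate above), and it has to be achieved together with the $N_P$ normal-gradient conditions using only the degrees of freedom of $V_h^1$ --- for $k=1$ there are no interior bubbles to spare, in contrast to the case $k\ge 2$. Showing that this coupled constraint system is uniformly solvable with a correction of size $O(h^2|u|_{H^2(\Omega)})$ is the technical heart; it is here that the mesh geometry hypotheses of Section~\ref{stability} enter, and this is the construction borrowed --- together with the convection--diffusion machinery developed above --- from \cite{Bu05}.
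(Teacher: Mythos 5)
Your route differs from the paper's in one essential design choice, and that choice is where the gap sits. The paper takes the base interpolant to be the $L^2$-projection $\pi_h$, not $i_{\tt SZ}^1$, and imposes the extra mean-zero condition only on the \emph{boundary} patches $P_i$ (suitably enlarged to contain interior nodes), where it is achieved together with \eqref{prop2} by the explicit two-parameter ansatz $\varphi_i = a w_I + b w_F$ and a $2\times 2$ system whose invertibility is checked by a sign argument, followed by scaling to get \eqref{cip_vp_stab}. The interior part of \eqref{projection} then comes for free from the global $L^2$-orthogonality $(u-\pi_h u, I_{CIP}\,\beta\cdot\nabla v_h)=0$, with $I_{CIP}$ the nodal-averaging quasi-interpolant and the fluctuation bound on $\beta\cdot\nabla v_h - I_{CIP}\,\beta\cdot\nabla v_h$ quoted from \cite{Bu05}. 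You instead start from $i_{\tt SZ}^1 u$ and ask the correction $\chi$ to restore the mean of $u$ on \emph{every} macro-patch of a partition of $\Omega$, in addition to the $N_P$ boundary conditions \eqref{prop2}. Your derivation of \eqref{projection} from such an interpolant is correct and in fact more self-contained than the paper's: the patchwise fluctuation estimate for the piecewise constant $\beta\cdot\nabla v_h$ in terms of its face jumps replaces the imported bound, and the identity $[\beta\cdot\nabla v_h]_F = (\beta\cdot n_F)[\nabla v_h\cdot n_F]$ is exactly the right way to bring in $J_h$ for $k=1$.

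The gap is in the well-definedness of your interpolant, which is part of the statement: it is asserted, not proved, and the hypotheses you invoke do not cover it. The mesh assumptions of Section \ref{stability} concern only the boundary segments $F_j$ and their patches; they say nothing about the existence of a partition of $\Omega$ into interior macro-patches each owning enough private degrees of freedom of $V_h^1$ to absorb one mean-value constraint (and, on boundary patches, two simultaneous constraints) with a \emph{localized} correction of the right size. For $k=1$ this is not automatic: there are no interior bubbles, a patch of $O(1)$ elements need not contain a node it does not share with neighbouring patches, and on boundary patches the compatibility of the mean-zero and normal-gradient constraints must be established uniformly --- this is precisely the content of the paper's $w_I$/$w_F$ construction and the sign argument for the matrix $\mathcal{A}$. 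To close the argument you must either carry out an analogous explicit construction on every macro-patch (together with the finite-overlap bookkeeping needed for $\|\chi\|+h\|\nabla\chi\|\lesssim h^2|u|_{H^2(\Omega)}$), or do what the paper does and switch the base interpolant to $\pi_h$, so that interior orthogonality is free and only the boundary patches require a bespoke correction.
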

\begin{proof}
We write $\pi_{CIP} u := \pi_h u +
\varphi_{CIP}$ where $\pi_h u$ denotes the $L^2$-projection on $V_h^1$
and $\varphi_{CIP} \in V_h^1$ is a function
defined on patches $P_i$ that satisfy the inequalities \eqref{constraint_vp} and
\eqref{stability_vp}, but also has the property
\[
\int_{P_i} \varphi_{CIP} ~\mbox{d}x = 0, \, i=1,\hdots,N_P.
\]
Clearly for this to hold we must modify the definition of the patches
on the faces $F_i$ to include interior nodes in the domain. For simplicity we 
assume that any element containing a node that connects to two nodes in the
boundary segment $\bar F_i$ (through edges that may be associated to
other elements) is included in the patch $P_i$ (see Figure
\ref{patch_CIP}). Define two functions $w_I$ and $w_F$ on $P_i$ (also
illustrated in Figure
\ref{patch_CIP}) such that
\[
w_I := \left\{ \begin{array}{l} 1 \mbox{ in all nodes } x \in
    \overset{\circ}{P_i}\\
0  \mbox{ in all nodes } x \in
    \Omega \setminus \overset{\circ}{P_i}\end{array} \right.,\quad w_F := \left\{ \begin{array}{l} 1 \mbox{ in all nodes } x \in
    \overset{\circ}{F_i}\\
0  \mbox{ in all nodes } x \in
    \bar \Omega \setminus \overset{\circ}{F_i}\end{array} \right..
\]
We must now show that there exists a function $\varphi_i = a w_I + b
w_F$ satisfying the two constraints
\begin{equation}\label{patch_syst}
\int_{P_i} \varphi_i~\mbox{d}x = 0,\quad \overline{\nabla
  \varphi_i \cdot n}^i = r_i.
\end{equation}
The construction of $\pi_{CIP} u$ is obtained by choosing
$
r_i = \overline{\nabla u \cdot n }^i - \overline{\nabla \pi_hu \cdot n }^i 
$
in the system \eqref{patch_syst} above and then defining $\varphi_{CIP}|_{P_i} := \varphi_i$.

To study $\varphi_i$, first map the patch $P_i$ to the reference patch $\hat P_i$.
Consider the linear system for $v:= (a,b)^T \in \mathbb{R}^2$ of the form:
\begin{multline*}
\mathcal{A} v := \left[ \begin{array}{cc} \int_{\hat P_i} \hat w_I
    ~\mbox{d}\hat x & \int_{\hat P_i} \hat w_F
    ~\mbox{d}\hat x\\
\int_{\hat F_i}\nabla \hat w_I \cdot \hat n ~\mbox{d}\hat s & \int_{\hat F_i}
\nabla \hat w_F \cdot \hat n
    ~\mbox{d}\hat s
\end{array}\right]
\left[ \begin{array}{c} a \\ b \end{array}\right] \\
= \left[\begin{array}{c} 0
    \\  \int_{\hat F_i} \nabla (\hat u -
    \pi_h\hat  u)\cdot \hat n ~\mbox{d}\hat s \end{array} \right] =:
\hat f.
\end{multline*}
We must prove that the matrix $\mathcal{A}$ is invertible, but this is immediate
noting that the two coefficients in the first line of the matrix both
are strictly positive, whereas in the second line the coefficient in
the first column is negative by construction and that in the right
column is positive. The stability estimate \eqref{stability_vp} now follows from a 
scaling argument back to the physical patch $P_i$. Indeed since the
matrix $\mathcal{A}$ is invertible we have
\[
|v| \lesssim \sup_{w \in \mathbb{R}^2} \frac{w^T \mathcal{A} v}{|w|} =
\sup_{w \in \mathbb{R}^2} \frac{w^T {\hat f}}{|w|}  = |\hat f|.
\]
By norm equivalence we have
\[
\|\hat \varphi_i\|_{\hat P_i} \lesssim \|\nabla \hat \varphi_i\|_{\hat
  P_i} \lesssim |v| \lesssim |\hat f|.
\]
After scaling back to the physical element we get
\begin{equation}\label{cip_vp_stab}
h^{-1} \|\varphi_i\|_{P_i} \lesssim \|\nabla \varphi_i\|_{P_i} \lesssim
|f| \lesssim \|h^\frac12
\nabla (u - \pi_h u) \cdot n\|_{F_i},
\end{equation}
which proves \eqref{stability_vp}.

The approximation error estimates are proven in the same way as in Lemma
\ref{proj_exist_1}.
Indeed by a similar decomposition of the error we have for this case
\[
\|u - \pi_{CIP} u\| \leq \|u - \pi_h u\| + \|\pi_h u
- \pi_{CIP} u\| \lesssim h^{2} |u|_{H^{2}(\Omega)} + \|\varphi_{CIP}\|
\]
and for $\varphi_{CIP}$ we may conclude using the proof of Lemma
\ref{proj_exist_1}, using \eqref{cip_vp_stab}.

It remains to prove the continuity \eqref{projection}. This follows from
\begin{multline*}
(u - \pi_{CIP} u, \beta \cdot \nabla v_h) = (u - \pi_h u, \beta \cdot
\nabla v_h)+\sum_{i=1}^{N_P} (\varphi_i , \beta \cdot \nabla v_h) \\
= (u - \pi_h u, \beta \cdot
\nabla v_h  - I_{CIP} \beta \cdot
\nabla v_h ) + \sum_{i=1}^{N_P} (\varphi_i , (\beta \cdot \nabla v_h-
\pi_{0,P_i} \beta \cdot \nabla v_h)).
\end{multline*}
Here $I_{CIP}$ denotes a particular quasi interpolation operator
defined using averages of $\beta \cdot \nabla v_h$ in each node (see
\cite{Bu05}) and
$\pi_{0,P_i}$ denotes the projection on piecewise constant functions
on $P_i$. Using norm equivalence on discrete spaces and mapping from
the reference patch, we observe that
\[
\|h^{\frac12}|\beta|^{-\frac12} (\beta \cdot
\nabla v_h  - I_{CIP} \beta \cdot
\nabla v_h)\|^2 \lesssim \gamma_{CIP}^{-1} J_h(v_h,v_h) 
\]
and 
\[
 \sum_{i=1}^{N_P}\|h^{\frac12}|\beta|^{-\frac12} (\beta \cdot
\nabla v_h  - \pi_{0,P_i} \beta \cdot
\nabla v_h)\|^2_{P_i} \lesssim \gamma_{CIP}^{-1} J_h(v_h,v_h).
\]
The first claim was proved in \cite{Bu05} and the second holds since $\beta \cdot
\nabla v_h$ is constant on each element.
\end{proof}
\begin{figure}
\includegraphics[width=6cm]{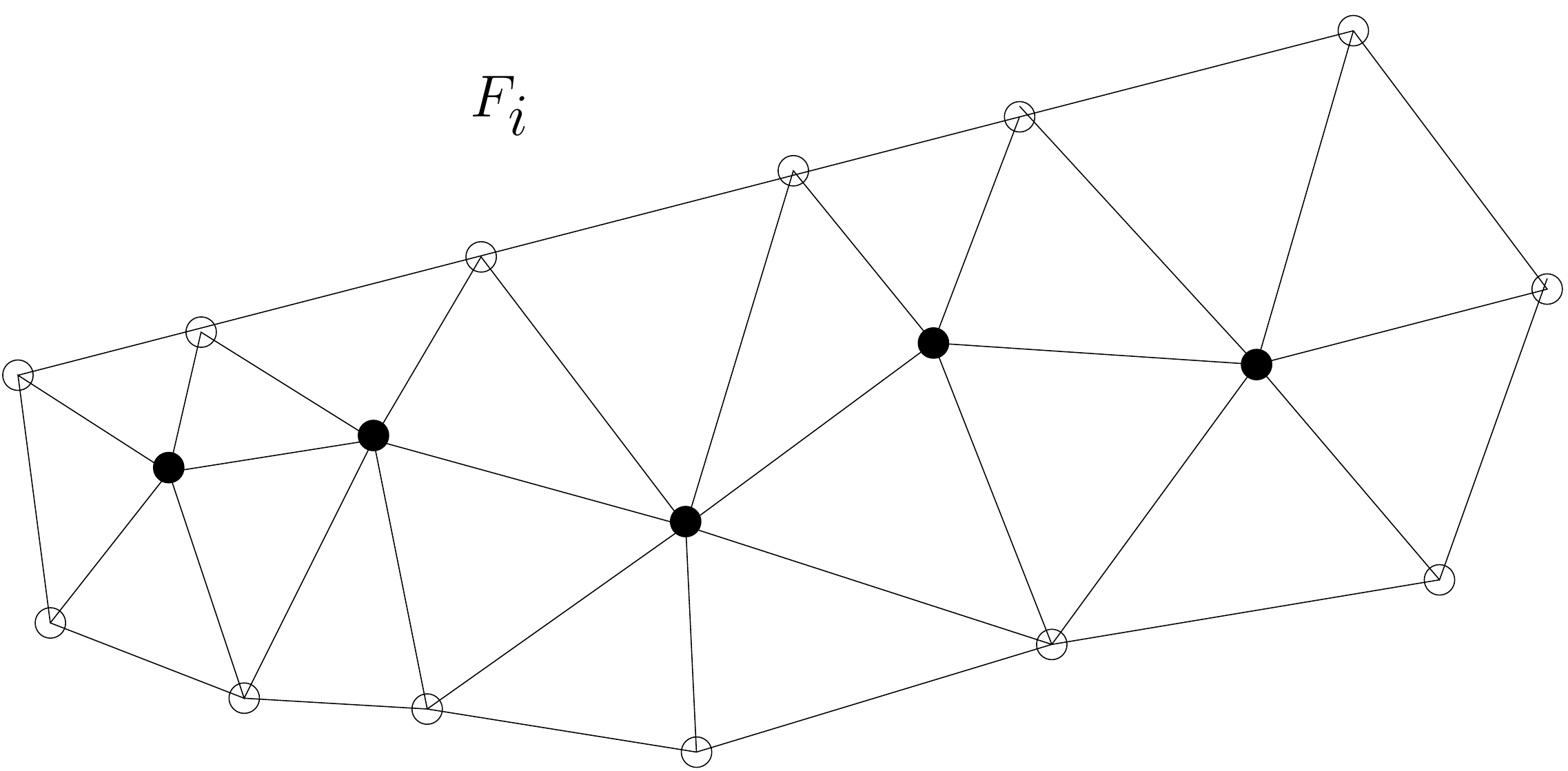}
\includegraphics[width=6cm]{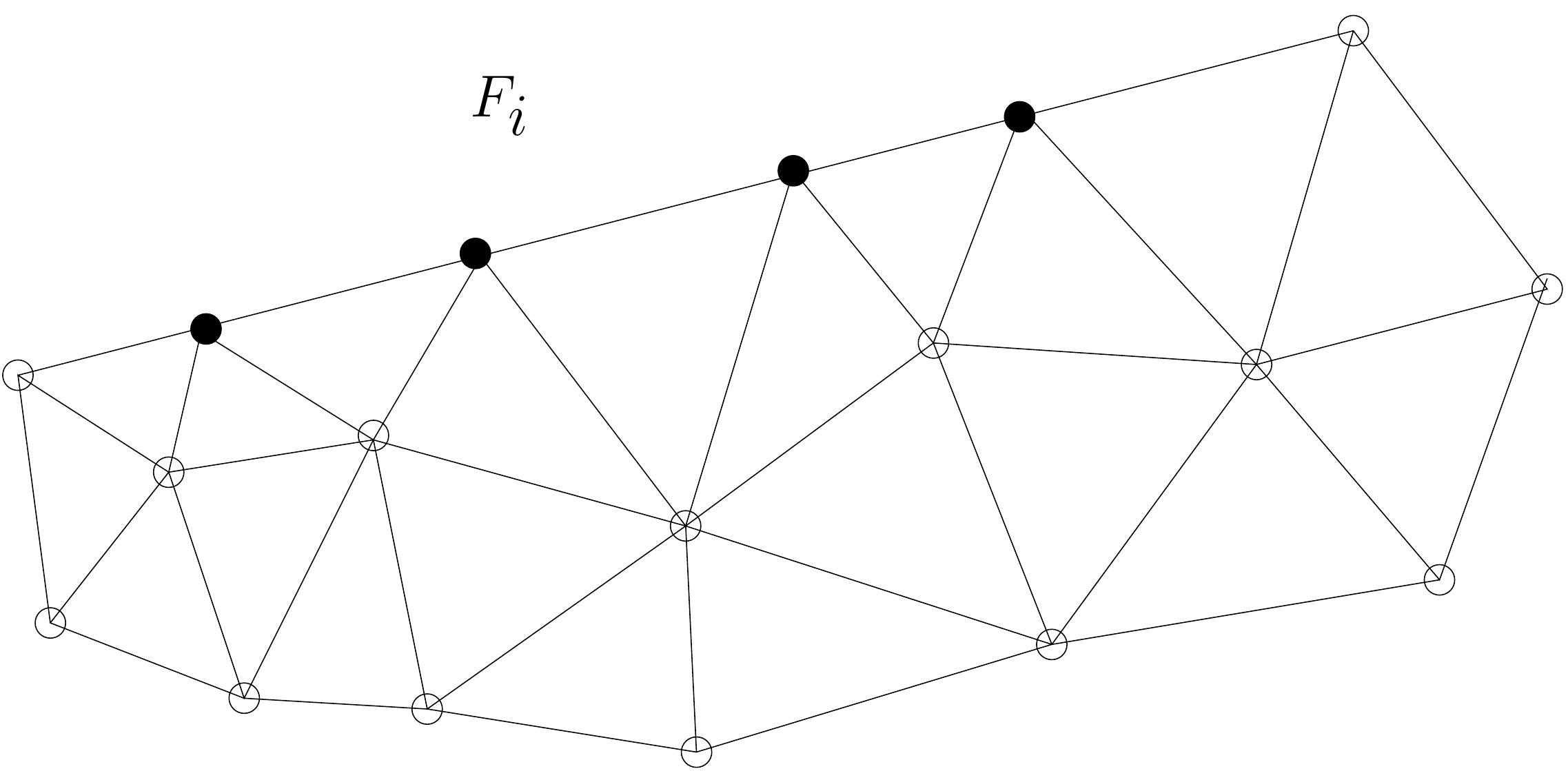}
\caption{Example of a boundary patch $P_i$, with the functions $w_I$
  (left) and $w_F$ (right). The functions take the value $1$ in filled
  nodes and zero in the other nodes.}
\label{patch_CIP}
\end{figure}
\begin{remark}
For high order element the construction of the interpolant $\pi_{CIP}
u$ is much more technical and beyond the scope of the present work. Indeed it is no longer sufficient to prove
orthogonality of $\varphi_i$ against a constant on $P_i$, but it must be
shown to be orthogonal to the continuous finite element space of order
$k-1$, on $P_i$. On the other hand the patches $P_i$ can be chosen
freely, provided $diam(P_i) = O(h)$.
\end{remark}
\section{Numerical examples}
We study two different numerical examples, both have been computed
using the package FreeFem++ \cite{freefem}. First we consider a
simple problem with smooth exact solution, then we
consider a convection-diffusion problem and show the stabilizing
effect of the Nitsche type weak boundary condition for convection
dominated flow. 
\subsection{Problem with smooth solution}
We consider equation \eqref{poisson} in the unit square, with
$f=5\pi^2 \sin(\pi x) \sin(2 \pi y)$ and
$g=0$. The mesh is unstructured with $N=10,20,40,80$ elements per
side. The exact
solution is then given by $u=\sin(\pi x) \sin(2 \pi y)$. We give the
convergence in both the $L^2$-norm and the $H^1$-norm for piece wise affine approximation
in Table \ref{unstr_smooth}. The case of quadratic approximation is
considered in Table \ref{unstr_smooth2}. The order $p$ in $O(h^p)$ is given
in parenthesis next to the error.
\begin{table}
\begin{center}
\begin{tabular}{|l|c|c|c|c|}
\hline
N & Nitsche $H^1$ & strong $H^1$ & Nitsche $L^2$&  strong $L^2$\\
\hline 
10 &7.0E-1 (---)&6.7E-1 (---)&2.4E-2 (---)&2.0E-2 (---)\\
\hline 
20 &3.5E-1 (1.0)& 3.5E-1 (0.94) &5.5E-3 (2.1) & 5.5E-3 (1.9)\\
\hline 
40 &1.7E-1 (1.0) & 1.7E-1 (1.0) & 1.3E-3 (2.1) & 1.3E-3 (2.1)\\
\hline 
80 &8.2E-2 (1.1) & 8.2E-2 (1.1) & 3.3E-4 (2.0) & 3.1E-4 (2.1)\\
\hline
\end{tabular}
\caption{Comparison of errors between the non-symmetric Nitsche method
and standard strongly imposed boundary conditions, using piece wise affine
approximation on unstructured meshes.}
\label{unstr_smooth}
\end{center}
\end{table}
%\begin{table}
%\begin{center}
%\begin{tabular}{|l|c|c|c|c|}
%\hline
%N & Nitsche $H^1$ & strong $H^1$ & Nitsche $L^2$&  strong $L^2$\\
%\hline 
%10 &8.9E-1 (---)&8.1E-1 (---)&4.0E-2 (---)&3.0E-2 (---)\\
%\hline 
%20 &4.3E-1 (1.1)& 4.1E-1 (0.98) &1.3E-2 (1.6) & 7.7E-3 (2.0)\\
%\hline 
%40 &2.1E-1 (1.0) & 2.0E-1 (1.0) & 3.7E-3 (1.8) & 1.9E-3 (2.0)\\
%\hline 
%80 &1.0E-1 (1.1) & 1.0E-1 (1.0) & 1.0E-3 (1.9) & 4.8E-4 (2.0)\\
%\hline
%\end{tabular}
%\caption{Comparison of errors between the non-symmetric Nitsche method
%and standard strongly imposed boundary conditions, using piece wise affine
%approximation on structured meshes.}
%\label{smooth}
%\end{center}
%\end{table}
\begin{table}
\begin{center}
\begin{tabular}{|l|c|c|c|c|}
\hline
N & Nitsche $H^1$ & strong $H^1$ & Nitsche $L^2$&  strong $L^2$\\
\hline 
10 & 5.3E-2 (---)&5.1E-2 (---)&1.7E-3 (---)&6.5E-4 (---)\\
\hline 
20 &1.4E-2 (1.9) &1.4E-2 (1.9) &2.2E-4 (2.9) &9.6E-5 (2.8)\\
\hline 
40 &3.5E-3 (2.0)&3.5E-3 (2.0)&2.1E-5 (3.4) &1.1E-5 (3.1)\\
\hline 
80 &8.6E-4 (2.0)&8.6E-4  (2.0)&2.5E-6 (3.1)&1.4E-6 (3.0)\\
\hline
\end{tabular}
\caption{Comparison of errors between the non-symmetric Nitsche method
and standard strongly imposed boundary conditions, using piece wise quadratic
approximation on unstructured meshes.}\label{unstr_smooth2}
\end{center}
\end{table}
%
%\begin{table}
%\begin{center}
%\begin{tabular}{|l|c|c|c|c|}
%\hline
%N & Nitsche $H^1$ & strong $H^1$ & Nitsche $L^2$&  strong $L^2$\\
%\hline 
%10 & 8.3E-2 (---)&7.7E-2 (---)&4.2E-3 (---)&9.4E-4 (---)\\
%\hline 
%20 &2.0E-2 (2.1) &2.0E-2 (1.9) &5.4E-4 (3.0) &1.2E-4 (3.0)\\
%\hline 
%40 &5.0E-3 (2.0)&5.0E-3 (2.0) &6.8E-5 (3.0) &1.5E-5 (3.0)\\
%\hline 
%80 &1.2E-3 (2.1)&1.2E-3 (2.1)&8.5E-6 (3.0)&1.9E-6 (3.0)\\
%\hline
%\end{tabular}
%\caption{Comparison of errors between the non-symmetric Nitsche method
%and standard strongly imposed boundary conditions, using piece wise quadratic
%approximation on structured meshes.}\label{smooth2}
%\end{center}
%\end{table}
%
\begin{table}
\begin{center}
\begin{tabular}{|l|c|c|c|c|c|}
\hline
error norm& $\gamma=0$ & $\gamma=10$ & $\gamma=20$ & $\gamma=40$& $\gamma=80$ \\
\hline
$\|u-u_h\|_{L^2}$ & 3.3E-4  & 2.9E-4 & 3.0E-4 & 3.0E-4 & 3.0E-4\\
\hline
$\|u-u_h\|_{H^1}$ & 8.2E-2 &  8.2E-2  &  8.2E-2&   8.2E-2 & 8.2E-2\\
\hline
\end{tabular}
\caption{Study of the dependence of the accuracy on the penalty
  parameter, piece wise affine approximation, unstructured mesh, $N=80$}\label{pen_1}
\end{center}
\end{table}
%
%\begin{table}
%\begin{center}
%\begin{tabular}{|l|c|c|c|c|c|}
%\hline
%error norm& $\gamma=0$ & $\gamma=10$ & $\gamma=20$ & $\gamma=40$& $\gamma=80$ \\
%\hline
%$\|u-u_h\|_{L^2}$ & 1.0E-3  & 4.5E-4 & 4.7E-4 & 4.8E-4 & 4.8E-4\\
%\hline
%$\|u-u_h\|_{H^1}$ & 1.0E-1 &  1.0E-1  &  1.0E-1&   1.0E-1 & 1.0E-1\\
%\hline
%\end{tabular}
%\caption{Study of the dependence of the accuracy on the penalty
 % parameter, piece wise affine approximation, structured mesh, $N=80$}\label{str_pen_1}
%\end{center}
%\end{table}
\begin{table}
\begin{center}
\begin{tabular}{|l|c|c|c|c|c|}
\hline
error norm& $\gamma=0$ & $\gamma=10$ & $\gamma=20$ & $\gamma=40$& $\gamma=80$ \\
\hline
$\|u-u_h\|_{L^2}$ & 2.1E-5  &  1.3E-5& 1.2E-5 & 1.2E-5 & 1.2E-5\\
\hline
$\|u-u_h\|_{H^1}$ & 3.5E-3 & 3.5E-3 & 3.5E-3 &  3.5E-3 & 3.5E-3\\
\hline
\end{tabular}
\caption{Study of the dependence of the accuracy on the penalty
  parameter, piece wise quadratic approximation,  unstructured mesh, $N=40$}\label{pen_2}
\end{center}
\end{table}
%\begin{table}
%\begin{center}
%\begin{tabular}{|l|c|c|c|c|c|}
%\hline
%error norm& $\gamma=0$ & $\gamma=10$ & $\gamma=20$ & $\gamma=40$& $\gamma=80$ \\
%\hline
%$\|u-u_h\|_{L^2}$ & 6.8E-5  &  2.5E-5& 1.9E-5 & 1.6E-5 & 1.5E-5\\
%\hline
%$\|u-u_h\|_{H^1}$ & 5.0E-3 & 4.9E-3 & 4.9E-3 &  4.9E-3 & 5.0E-3\\
%\hline
%\end{tabular}
%\caption{Study of the dependence of the accuracy on the penalty
 % parameter, piece wise quadratic approximation,  structured mesh, $N=40$}\label{str_pen_2}
%\end{center}
%\end{table}
We have not managed to construct an example exhibiting the suboptimal
convergence order of the Nitsche method. Some cases with
non-homogeneous boundary conditions, not reported here, were computed
both with affine and quadratic elements. They all had optimal
convergence on the finer meshes.
The theoretical results do not extend
to the symmetric version of Nitsche's method and stability is unlikely
to hold on general meshes. Applying the symmetric method to
the proposed numerical example yields a solution with clear boundary
oscillations on the coarse meshes see Figure \ref{sym_vs_nonsym}. On
finer meshes these oscillations vanish and the performance is
similar to that of the non-symmetric method.
\begin{figure}
\begin{center}
\hspace{-1.cm}
\includegraphics[width=7cm]{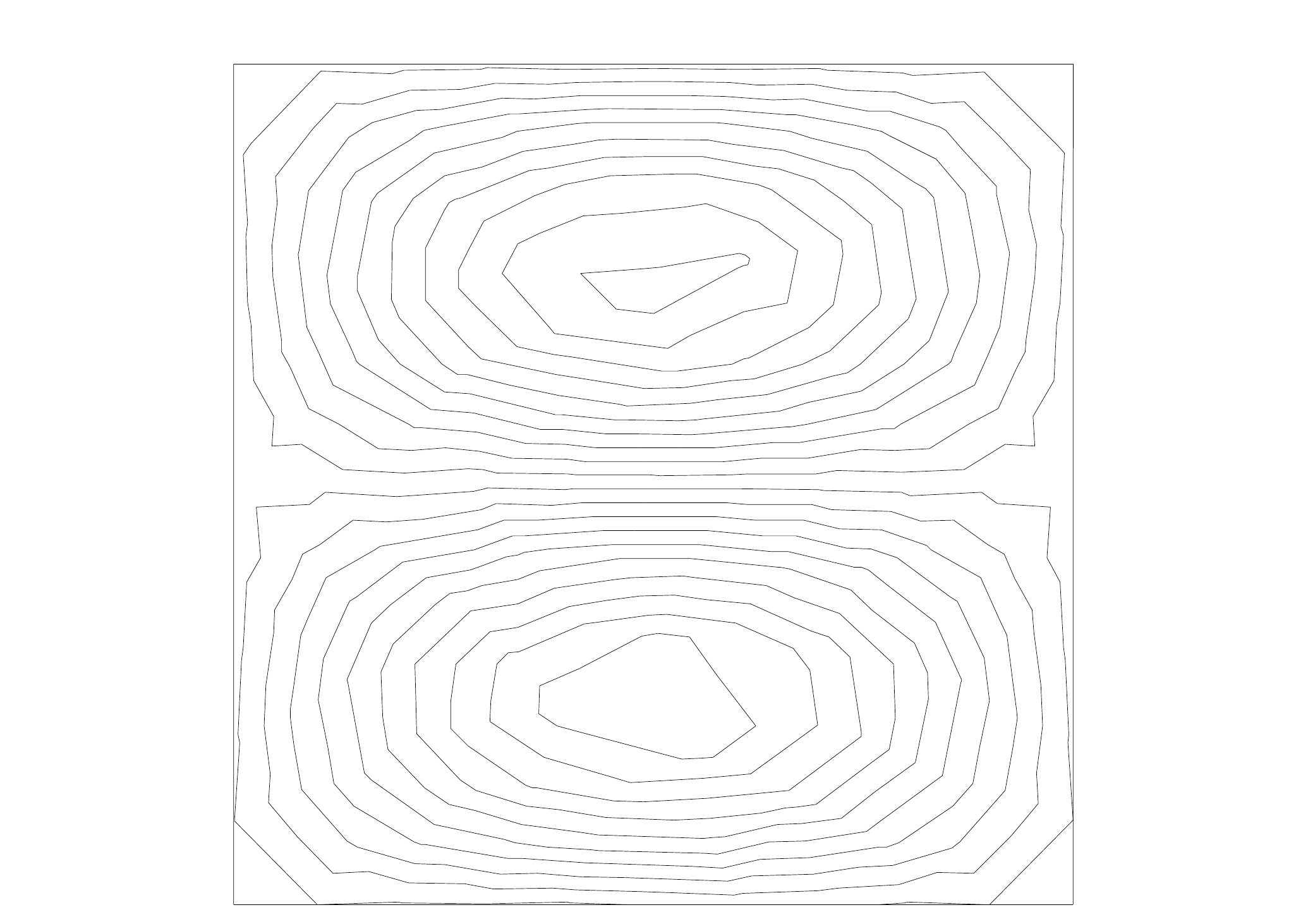}\hspace{-1.7cm}
\includegraphics[width=7cm]{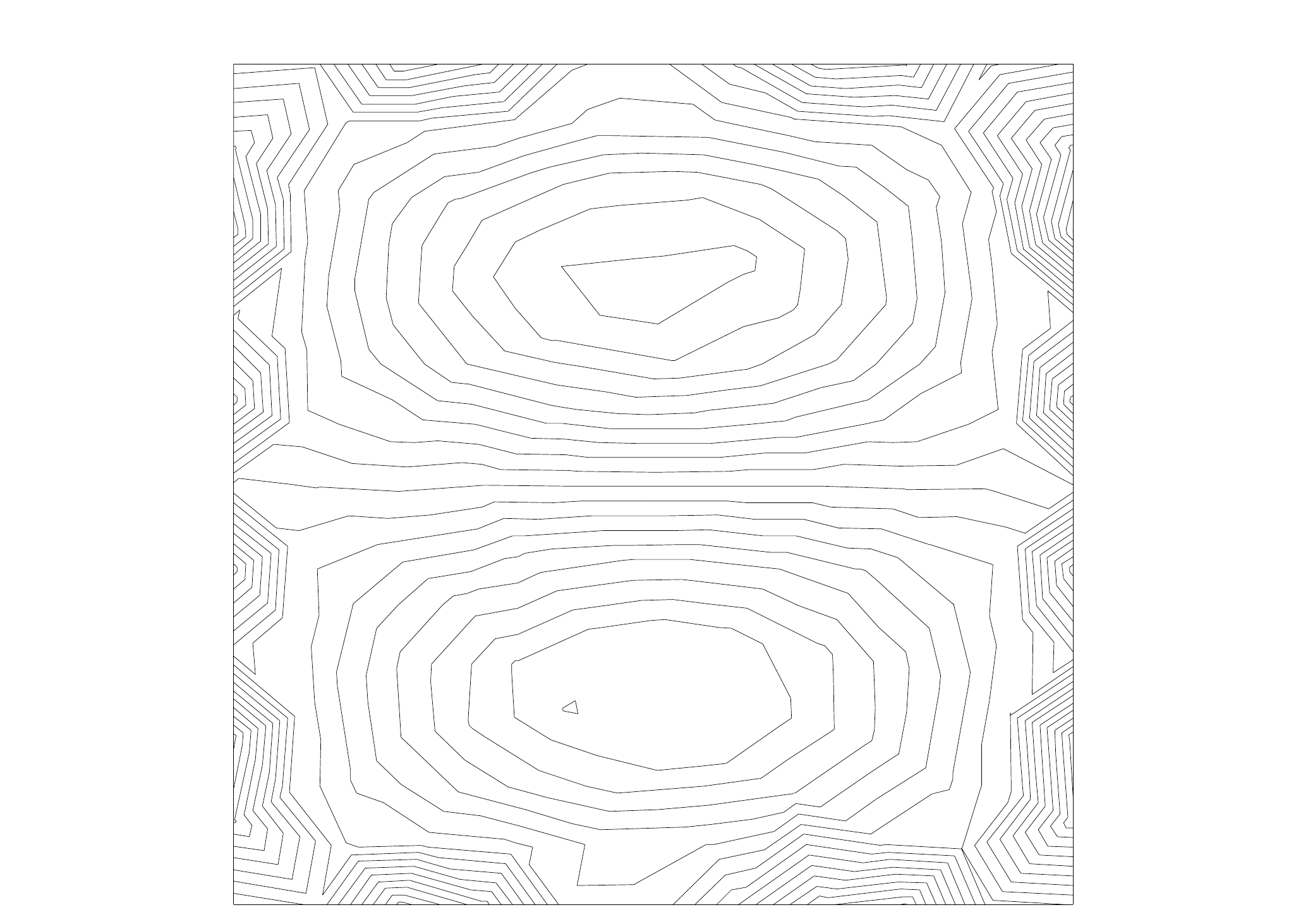}\hspace{-1.7cm}
\caption{Comparison of the contourplots of the unstabilized
  non-symmetric method (left) and symmetric (right) method, piece wise
  affine approximation, N=10.}\label{sym_vs_nonsym}
\end{center}
\end{figure}
Note that although the convergence of the Nitsche method is optimal in
this case, the error
constant of the non-symmetric method in the $L^2$-norm is a factor two larger than that of
the strongly imposed boundary conditions for piece wise quadratic
approximation. The same computations were made on structured meshes
(not reported here) and this effect was slightly larger in this case,
with a factor two in the affine case and four in the quadratic case.
The errors in the $H^1$-norm on the other hand are
of comparable size for the two methods.

 This motivates a study of how the error depends on the penalty
 parameter $\gamma$ in \eqref{penalty}. We therefore run a series of
 computations
with $\gamma=0,10,20,40,80$. In Table
\ref{pen_1}  we report the results for piece wise affine approximation
and in Table \ref{pen_2}  the results for piece wise quadratic 
approximation. 
We note that there is a visible, but negligible,
effect on the error measured in the $L^2$-norm, but no 
effect on the error in the $H^1$-norm.
\subsection{Problem with outflow layer}
For this case we only compare the solutions qualitatively. We
consider the problem with a convection term \eqref{conv_diff}.
To create an outflow layer we have chosen $f := 1$, $\beta := (0.5,1)$, $\sigma := 0$ in $\Omega$.
We discretized $\Omega$ with a structured mesh having $80$ piece wise affine elements on each side. The
contourplots for $\varepsilon = 0.1, 0.001, 0.00001$ are reported in
Figure \ref{nit_lay} for Nitsche's method and in Figure \ref{gal_lay}
for the strongly imposed boundary conditions. Note that no stabilization has been
added in either case. This computation illustrates the strong stabilizing effect
of the weakly imposed boundary condition. A theoretical explanation of
this phenomenon was given in \cite{Sch08}. Finally we consider the
effect of adding stabilization to the computation. In this case we
take $N=80$ with piece wise quadratic approximation. We report the
results
of a computation without stabilization, with the SD-method
($\gamma_{SD} = 0.2$) and with
the CIP-method ($\gamma_{CIP}=0.005$) in Figure \ref{nit_stab}. Note that the stabilized
methods
clean up the remaining spurious oscillations in both cases.
\begin{figure}
\begin{center}
\hspace{-1.cm}
\includegraphics[width=5.3cm]{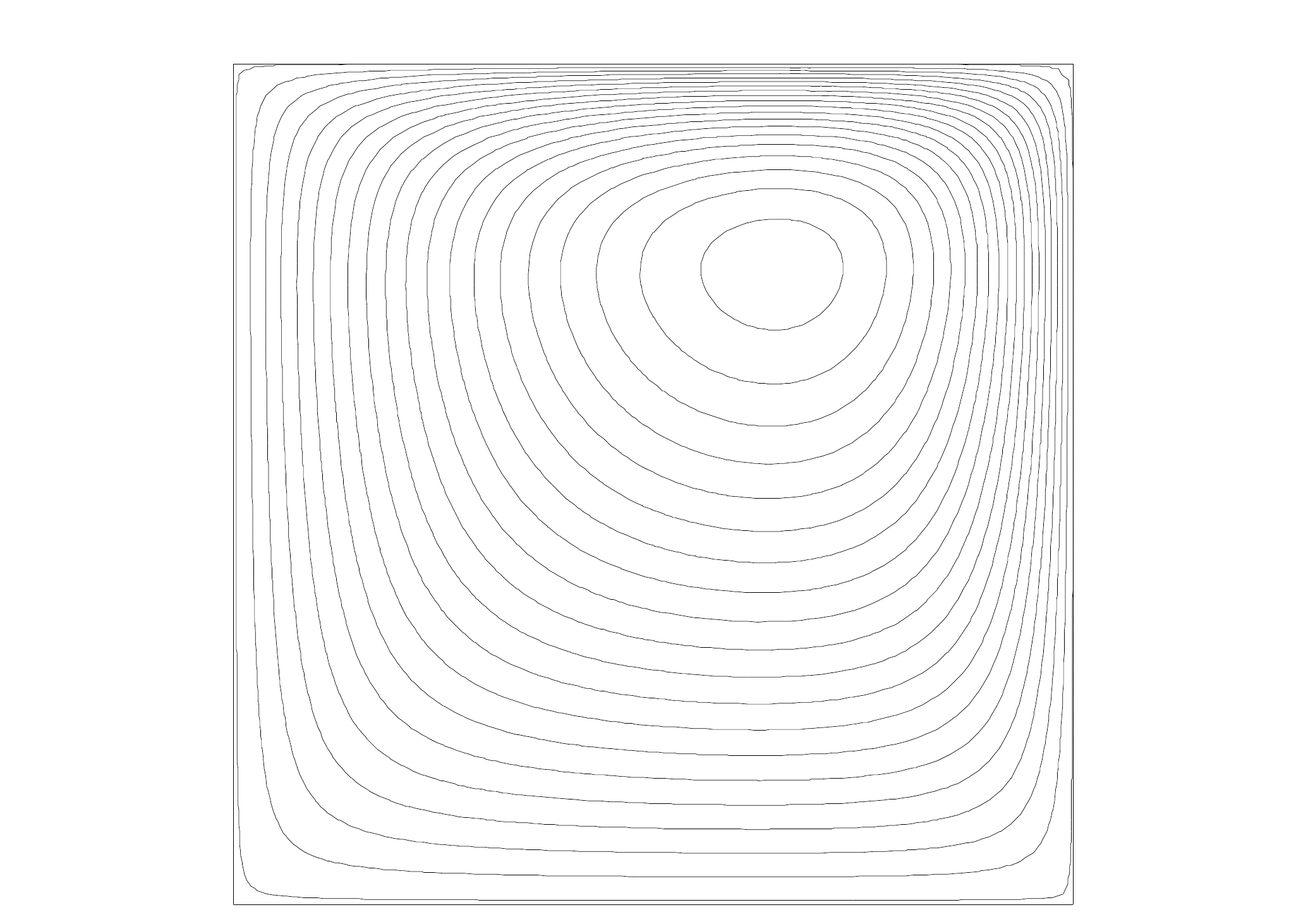}\hspace{-1.7cm}
\includegraphics[width=5.3cm]{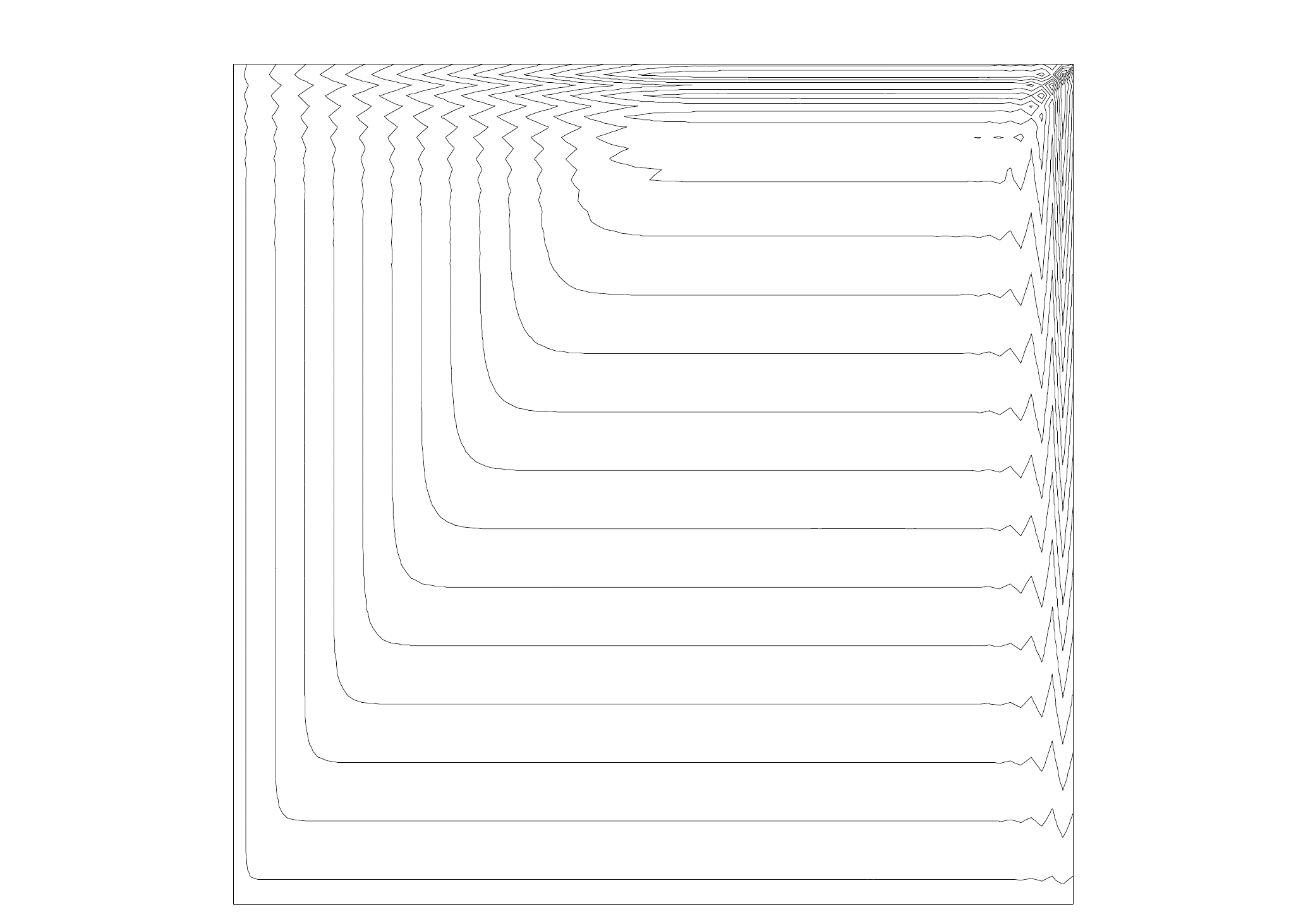}\hspace{-1.7cm}
\includegraphics[width=5.3cm]{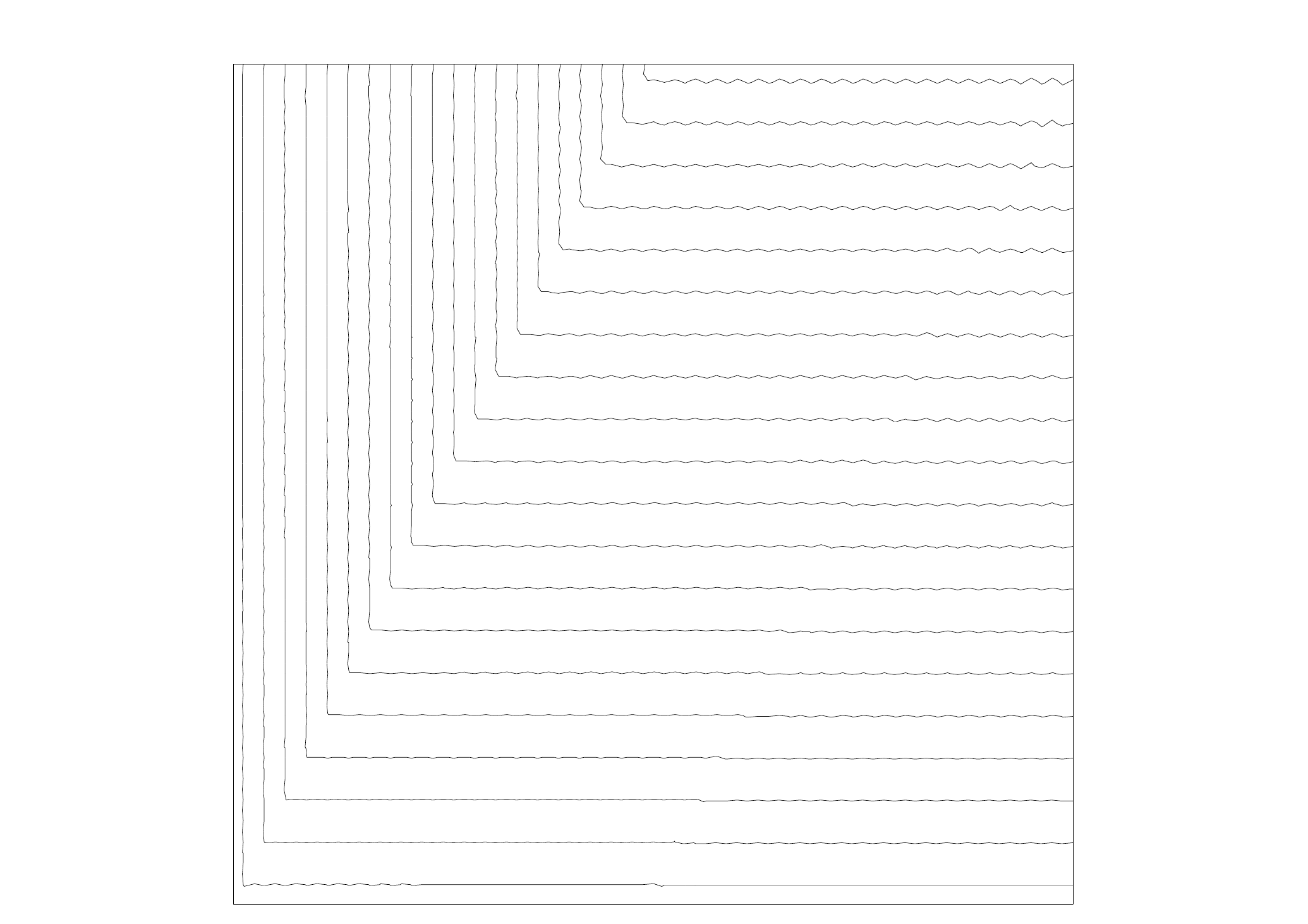}
\caption{Convection-diffusion equation discretized using the
  non-symmetric Nitsche boundary condition, $N=80$, piece wise affine
  approximation, from left to right:
  $\varepsilon = 0.1$, $\varepsilon = 0.001$, $\varepsilon =
  0.00001$. }\label{nit_lay}
\end{center}
\end{figure}
\begin{figure}
\begin{center}
\hspace{-1.cm}
\includegraphics[width=5.3cm]{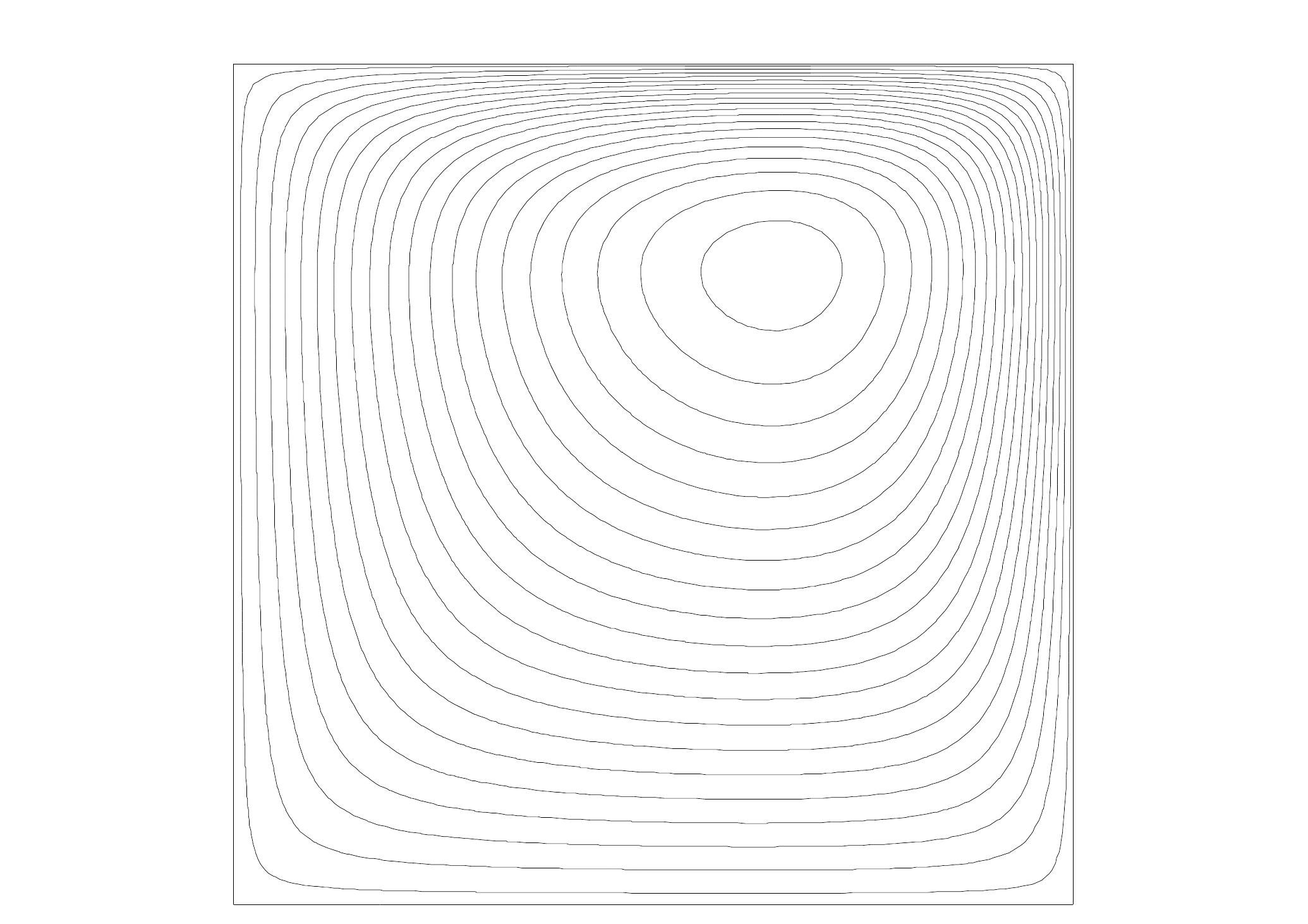}\hspace{-1.7cm}
\includegraphics[width=5.3cm]{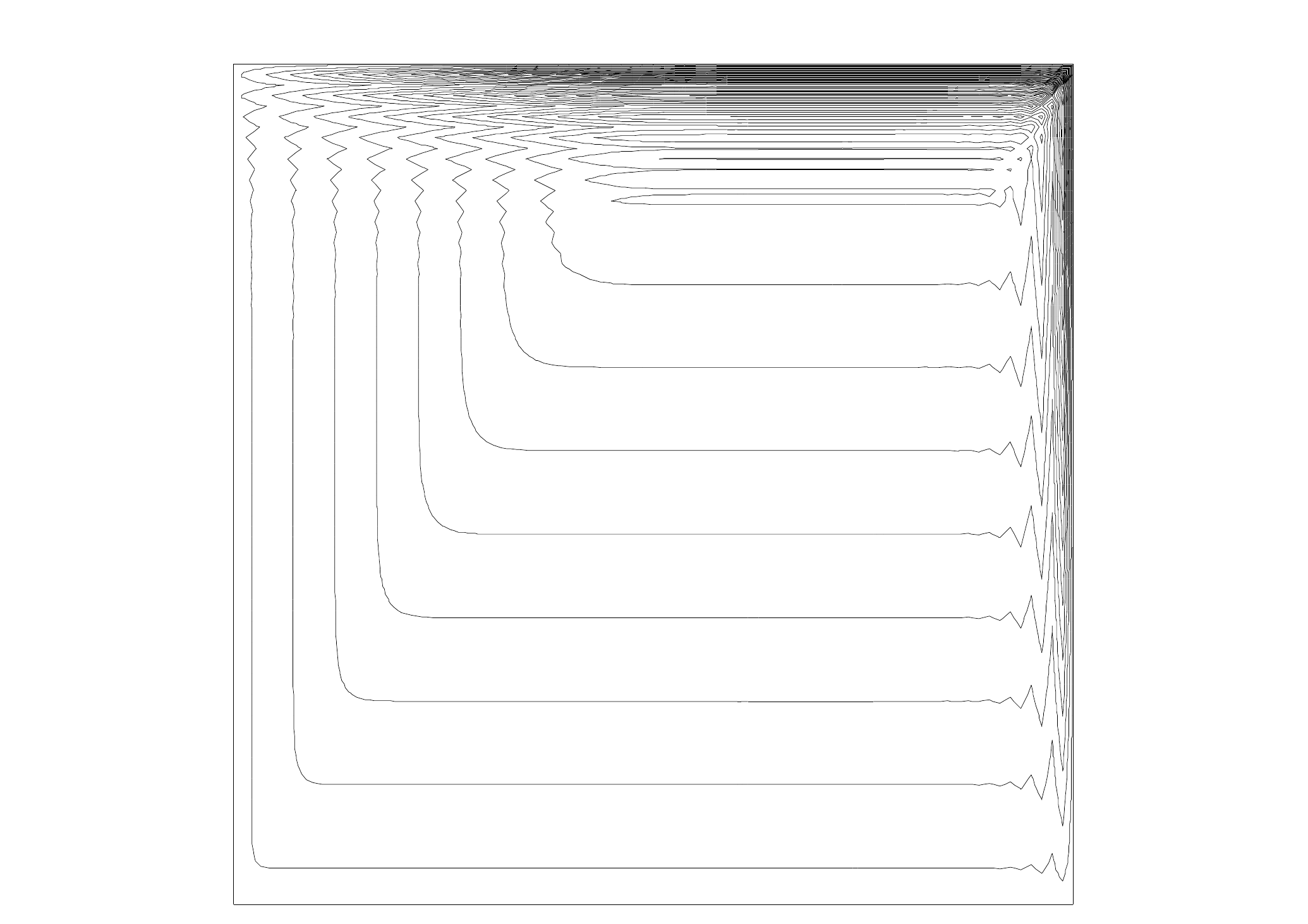}\hspace{-1.7cm}
\includegraphics[width=5.3cm]{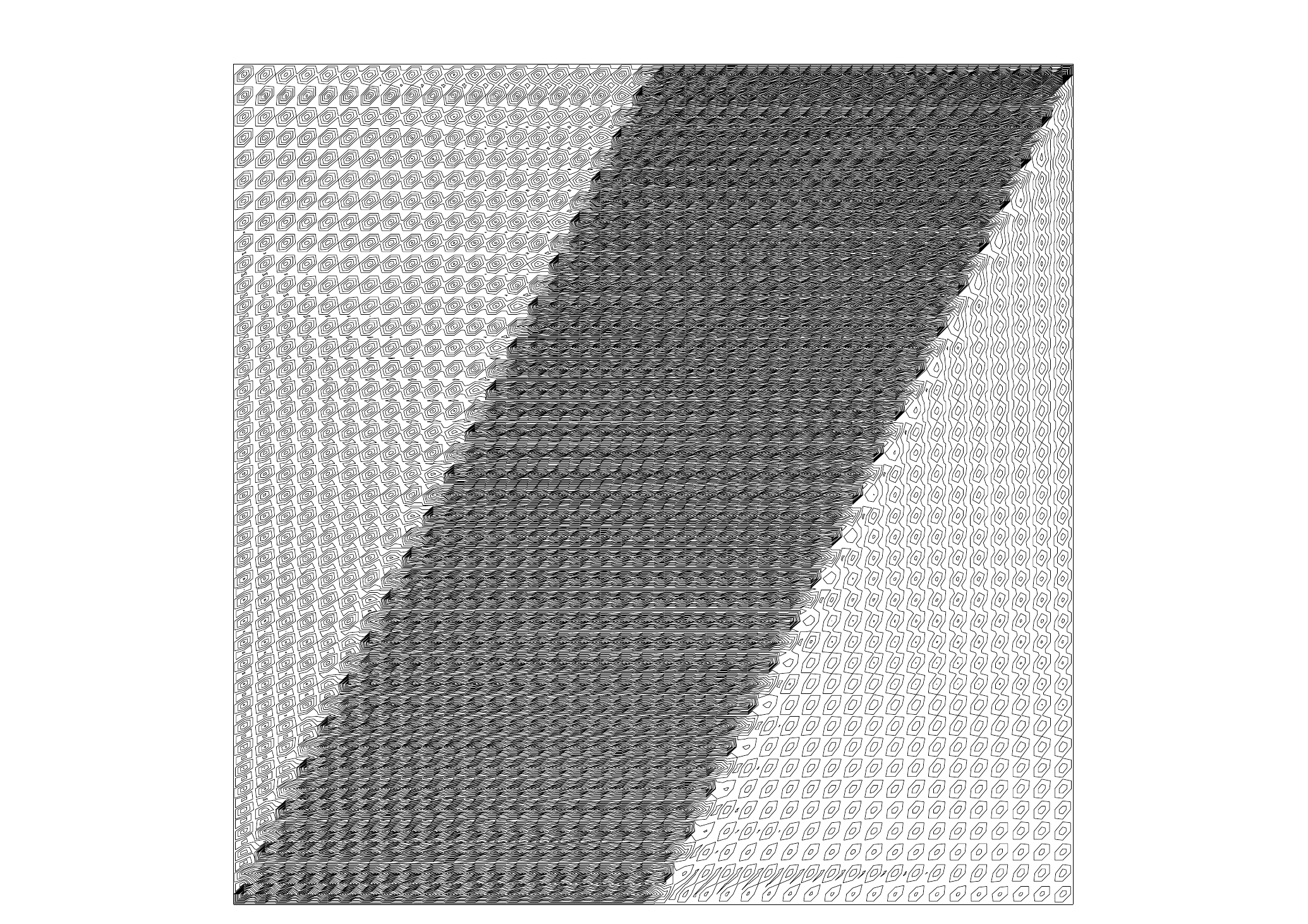}
\caption{Convection-diffusion equation discretized using strongly
  imposed boundary condition, $N=80$, piece wise affine approximation, from left to right:
  $\varepsilon = 0.1$, $\varepsilon = 0.001$, $\varepsilon =
  0.00001$.}\label{gal_lay}
\end{center}
\end{figure}
\begin{figure}
\begin{center}
\hspace{-1.cm}
\includegraphics[width=5.3cm]{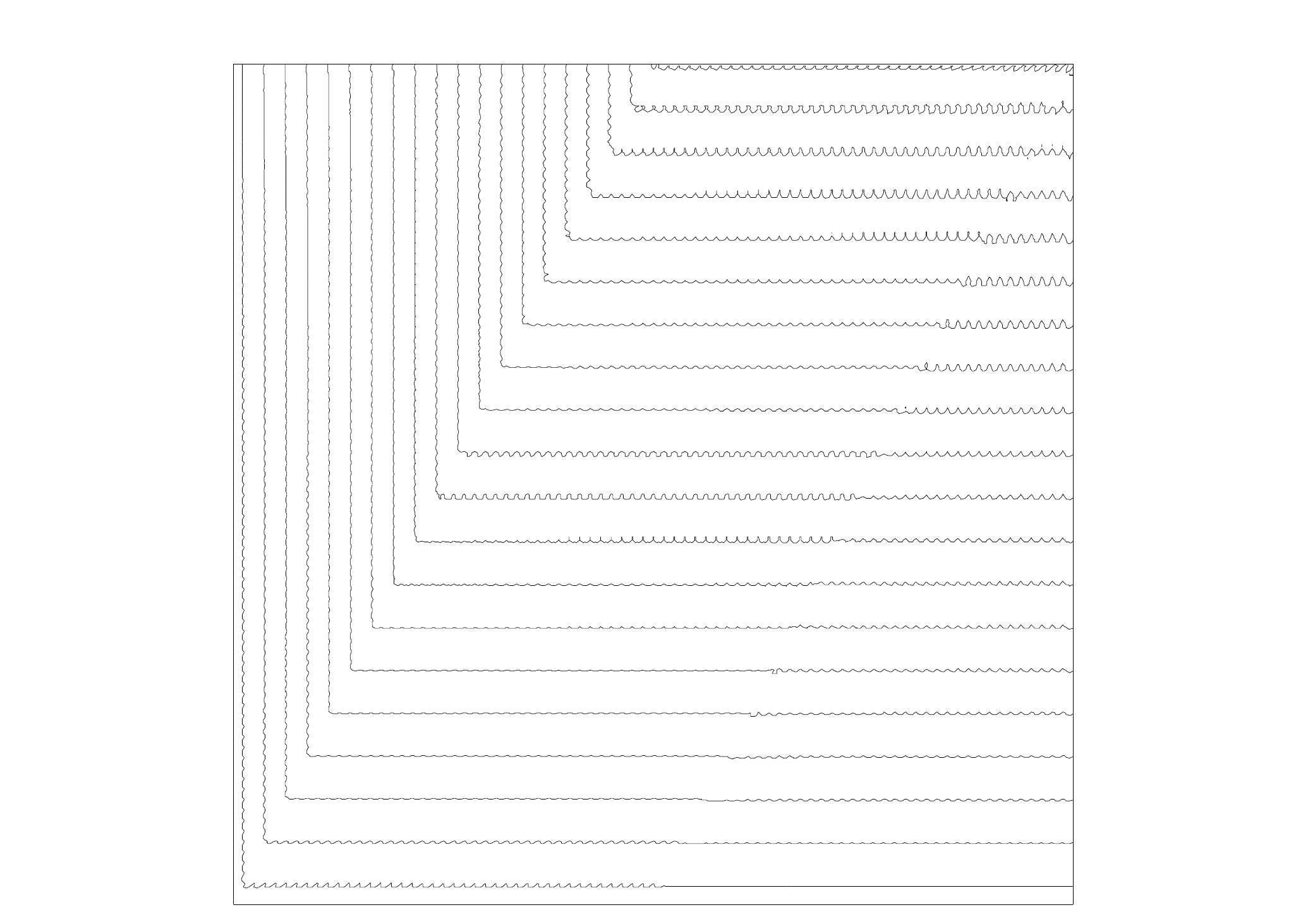}\hspace{-1.7cm}
\includegraphics[width=5.3cm]{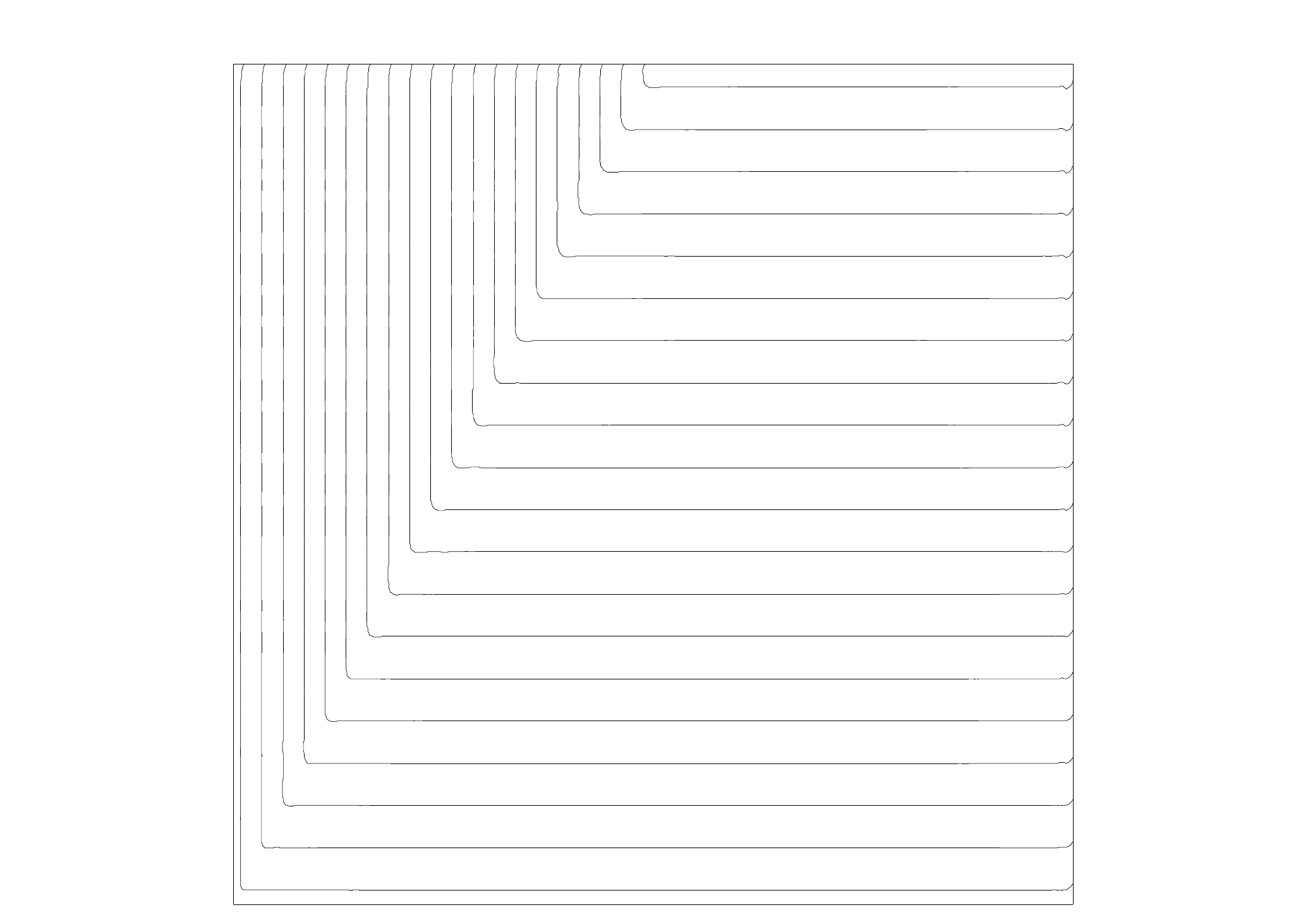}\hspace{-1.7cm}
\includegraphics[width=5.3cm]{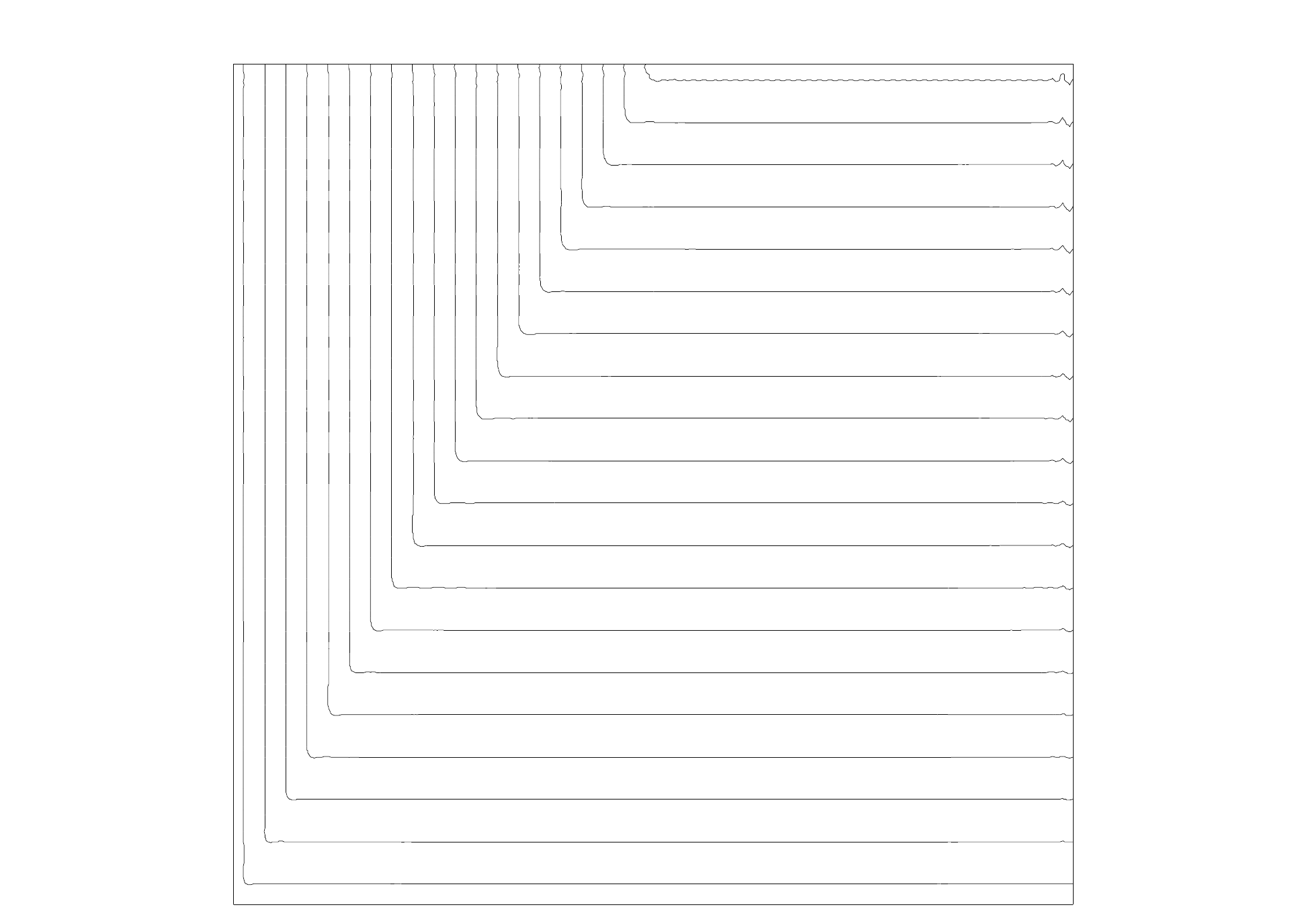}
\caption{Convection-diffusion equation discretized using the
  non-symmetric Nitsche boundary condition, $N=80$, $\varepsilon =
  0.00001$, piece wise
  quadratic approximation, from left to right:
  no stabilization, SD-stabilization ($\gamma_{SD} = 0.5$),
  CIP-stabilization ($\gamma_{CIP}=0.005$). }\label{nit_stab}
\end{center}
\end{figure}
\\[5mm]
{\bf Acknowledgements }
This note would not have been written without Professor Tom Hughes who
told me that the non-symmetric Nitsche's method appeared to be stable without
penalty in large-eddy simulations and pointed me to the reference \cite{HEML00b}. I would also like to thank
Professor Rolf Stenberg for interesting
discussions
on the subject of Nitsche's method.
\bibliographystyle{plain}   
\bibliography{Biblio}
\end{document}